\def\journal@name{}\makeatother
\numberwithin{equation}{section}
\theoremstyle{plain}
\def\HA{\mathcal{H}}
\newcommand{\abs}[1]{\left\lvert #1 \right\rvert}
\newcommand{\Abs}[1]{ \biggl \lvert #1 \biggr \rvert}
\newcommand{\ABs}[1]{ \biggl \lvert #1 \biggr \rvert}
\DeclarePairedDelimiterXPP\pk[1]{\mathbb{P}}\{ \}{}{ #1}
\DeclarePairedDelimiterXPP\E[1]{\mathbb{E}}\{ \}{}{	#1}
\NewDocumentCommand{\ceil}{s O{} m}{%
	\IfBooleanTF{#1} 
	{\left\lceil#3\right\rceil} 
	{#2\lceil#3#2\rceil} 
}
\NewDocumentCommand{\floor}{s O{} m}{%
	\IfBooleanTF{#1} 
	{\left\lfloor#3\right\rfloor}
	{#2\lfloor#3#2\rfloor}
}
\def\HH{{[h]}}
\def\x{\vk{x}}
\definecolor{c20}{rgb}{0.,0.7,0.}
\definecolor{c30}{rgb}{0.,0.,1.}
\definecolor{c40}{rgb}{1,0.1,0.7}
\definecolor{c50}{rgb}{1,0,0}
\definecolor{c60}{rgb}{1,0.9,0.1}
\definecolor{c70}{rgb}{0.50,1.00,0.00}
\def\kk#1{{\textcolor{black}{#1}}}
\def\k1#1{{\textcolor{red}{#1}}}
\numberwithin{equation}{section}
\newtheorem{theo}{Theorem}[section]
\newtheorem{sat}[theo]{Proposition}
\newtheorem{de}[theo]{Definition}
\newtheorem{lem}[theo]{Lemma}
\newtheorem{example}[theo]{Example}
\newtheorem{korr}[theo]{Corollary}
\newtheorem{remark}[theo]{Remark}
\newtheorem{prop}[theo]{Proposition}
\numberwithin{equation}{section}
\newcommand{\prooflem}[1]{\textsc{Proof of Lemma} \ref{#1}:}
\newcommand{\proofkorr}[1]{\textsc{Proof of Corollary} \ref{#1}:}
\newcommand{\QED}{\hfill $\Box$}
\newcommand{\COM}[1]{}
\def\IF{\infty}
\newcommand{\R}{\mathbb{R}}
\newcommand{\inr}{\in \R}
\newcommand{\BQN}{\begin{eqnarray}}
\newcommand{\EQN}{\end{eqnarray}}
\newcommand{\BQNY}{\begin{eqnarray*}}
	\newcommand{\EQNY}{\end{eqnarray*}}
\def\ldot{, \ldots,}
\newcommand{\limit}[1]{\lim_{#1 \to   \infty}}
\newcommand{\kb}[1]{\boldsymbol{#1}}
\newcommand{\vk}[1]{\kb{#1}}
\def\bqn#1{\begin{eqnarray} #1 \end{eqnarray}}
\newcommand{\BS}{\begin{sat}}
	\newcommand{\ES}{\end{sat}}
\newcommand{\BT}{\begin{theo}}
	\newcommand{\ET}{\end{theo}}
\newcommand{\BK}{\begin{korr}}
	\newcommand{\EK}{\end{korr}}
\newcommand{\BEX}{\begin{example}}
	\newcommand{\EEX}{\end{example}}
\newcommand{\BD}{\begin{de}}
	\newcommand{\ED}{\end{de}}
\newcommand{\BIT}{\begin{itemize}}
	\newcommand{\EIT}{\end{itemize}}
\newcommand{\BDI}{\begin{description}}
	\newcommand{\EDI}{\end{description}}
\newcommand{\BRM}{\begin{remark}}
	\newcommand{\ERM}{\end{remark}}
\newcommand{\BEL}{\begin{lem}}
	\newcommand{\EEL}{\end{lem}}
\newcommand{\nelem}[1]{{Lemma \ref{#1}}}
\newcommand{\neprop}[1]{{Proposition \ref{#1}}}
\newcommand{\netheo}[1]{{Theorem \ref{#1}}}
\newcommand{\nekorr}[1]{{Corollary \ref{#1}}}
\def\X{\vk{X}}
\def\X{\vk{X}}
\def\Z{\mathbb{Z}}
\def\inn{\in \mathbb{N}}
\def\bD{ \mathbf{ D} }
\def\Var{\mathrm{Var}}
\def\H{{H}}
\def\LT{\left}
\def\RT{\right}
\def\bF{ \bar F}
\definecolor{c20}{rgb}{0.,0.7,0.}
\definecolor{c30}{rgb}{0.,0.,1.}
\definecolor{c40}{rgb}{1,0.1,0.7}
\definecolor{c50}{rgb}{1,0,0}
\definecolor{c60}{rgb}{1,0.9,0.1}
\def\VB{ \sigma^2_{\vk b} }
\definecolor{kd}{RGB}{255,0,0}
\definecolor{eh}{RGB}{0,255,0}
\definecolor{lw}{RGB}{0,0,255}
\def\b{\vk{b}}
\def\x{\vk{x}}
\def\tilb{\widetilde{\b}}
\def\IB{I}
\def\JB{J}
\def\SI{\Sigma}
\def\SIJI{\Sigma_{JI}}
\def\SIM{\SI^{-1}}
\def\SIIIM{ (\Sigma_{II})^{-1} }
\def\x{\vk{x}}
\def\y{\vk{y}}
\def\bD{ E }
\def\t{t}
\def\xxiu{\vk{X}_{u,\tau}}
\def\w{\vk{w}}
\def\rmF{\mathrm{F}}
\newcommand{\normF}[1]{\left\Vert#1\right\Vert_{\mathrm{F}}}
\newcommand{\ve}{\varepsilon}
\newcommand{\sP}{\mathcal{P}}
\DeclareMathOperator*{\sgn}{sgn}
\DeclareMathOperator*{\cov}{cov}
\newcommand{\diag}[1]{\mathrm{diag}(#1)}
\def\Y{\vk{Y}}
\def\v{\vk{v}}
\def\X{\vk{X}}
\def\id{\mathbf{1}}
\def\Z{\vk{Z}}
\def\W{\vk{W}}
\def\d{\vk{d}}
\def\I{\mathcal{I}}
\def\IF{\infty}
\def\LT{\left}
\def\RT{\right}
\def\HH{\mathcal{H}}
\def\pB{p_{\vk b} (u)}
\begin{document}

\title{Extremes of Vector-Valued Gaussian Processes}

\author{Krzysztof D\c{e}bicki}
\address{Krzysztof D\c{e}bicki, Mathematical Institute, University of Wroc\l aw, pl. Grunwaldzki 2/4, 50-384 Wroc\l aw, Poland}
\email{Krzysztof.Debicki@math.uni.wroc.pl}

\author{Enkelejd  Hashorva}
\address{Enkelejd Hashorva, Department of Actuarial Science, 
University of Lausanne,
UNIL-Dorigny, 1015 Lausanne, Switzerland
}
\email{Enkelejd.Hashorva@unil.ch}

\author{Longmin Wang }
\address{Longmin Wang, School of Mathematical Sciences, Nankai University, 94 Weijin Road, Tianjin 300071, P.R. China
}
\email{wanglm@nankai.edu.cn}
 \maketitle
\begin{abstract}
The seminal  papers of Pickands \cite{PicandsB,MR0250368} paved the way for a systematic study of high exceedance probabilities
of both stationary and non-stationary Gaussian processes.
Yet, in the vector-valued setting, due to the lack of key tools including  Slepian's Lemma,
Borell-TIS and Piterbarg inequalities there has not been any methodological development in the
literature for the study of extremes of vector-valued Gaussian processes. In this contribution we develop the uniform double-sum method for the vector-valued setting
obtaining the exact asymptotics of the exceedance probabilities for both stationary and non-stationary Gaussian processes.
We apply our findings to 
the  operator fractional Brownian motion and the operator fractional Ornstein-Uhlenbeck process.
\end{abstract}
\bigskip

{\bf Key Words}:
{High exceedance probability},
{Vector-valued Gaussian process},
{Operator fractional Ornstein-Uhlenbeck processes},
{Operator fractional Brownian motion},
{Uniform double-sum method},
{Vector-valued Borell-TIS inequality},
{Vector-valued Piterbarg inequality}.
\bigskip

{\bf AMS Classification:}  Primary 60G15; secondary 60G70.

\section{Introduction}
The asymptotic analysis  of probabilities of rare events has been the topic of numerous past contributions and is still
an active area of research. In this article the rare events of interests are the high exceedances of
vector-valued Gaussian processes, i.e., we shall investigate  the {exact} approximation of
$$\pB = \pk{ \exists t \in [0,T]:  X_j(t) > ub_j,j\le d }$$
as $u \to \IF$ with $\vk X(t)= (X_1(t) \ldot X_d(t))^\top$, $t\in [0,T]$ a given centered $\R^d$-valued Gaussian process
with a.s.\ continuous sample paths and given constants $b_i$'s.  In order to avoid trivialities, hereafter we shall assume that at least one of the  $b_i$'s  is positive. \\

The approximation of  $\pB$ is of interest in various applications including statistics,   ruin theory, queueing theory,
see e.g., \cite{AzV19, MR1747100, 
	borovkov17, PalmBorov, delsing2018asymptotics}. 
 Large deviation type results related to {the vector-valued setting of this contribution}
 are obtained in \cite{PS05,Debicki10},  see \cite{MR1272734,MR1415241, MR3336847} for various interesting findings for non-Gaussian $\vk X$.

 Even the seemingly trivial case that $\vk X$ has independent components is quite challenging,
 see the recent contributions by Azais and Pham \cite{AzV19, Vie19}.  The   available  results in the literature that concern
 Gaussian processes with  dependent components cover only linear transformations of an $\R^d$-valued
 Brownian motion,  see \cite{Rolski17,mi:18}.
 The independence of increments and the self-similarity property of Brownian motion are essential
 properties 
 used in the aforementioned contributions,
 {which also determine limits of applicability of methods used in \cite{Rolski17,mi:18}}.\\

In the one-dimensional case three different methods are often utilised when dealing with
the asymptotics of extremes of Gaussian processes: Pickands method which is based on the
discretisation of supremum  and the negligibility of double-sum term \kk{(see also \cite{ChanLai} for further refinements)},
Piterbarg's approach  which makes particular use of continuous mapping theorem \kk{(see \cite{Pit20})},
and Berman's method  that capitalises on the relation between supremum and sojourn times \kk{(see \cite{Berman82, Berman92})}.
{All the above techniques are}
heavily based on the following fundamental results
\begin{enumerate}
\item [i)] Slepian lemma, see e.g., \cite[Thm 2.2.1]{AdlerTaylor}; 
\item[ii)] Borell-TIS and Piterbarg inequalities, see e.g., \cite[Thm 2.1.1]{AdlerTaylor} and \cite[Thm 8.1]{Pit2001};
\item [iii)] {uniform version} of the classical Pickands-Piterbarg lemma, see \cite[Lem 2.1]{DeHaJi2016}. 
\end{enumerate}
Roughly speaking, in the one-dimensional setting,  {Slepian  lemma},
Borell-TIS and Piterbarg inequalities are essential for the non-stationary case.
The first {one} is utilised to approximate rare events by switching to  stationary Gaussian processes,
whereas {the} both inequalities show that only a small neighbourhood around the point of the maximum
of the variance {(assumed to be unique)} is responsible for the rare-event approximation; {see, e.g., the seminal monograph
by Piterbarg \cite{Pit96}}. \\

{One of the reasons for the lack of} methodological approach for studying  extremes
of vector-valued Gaussian process is that the three \kk{key tools}
mentioned above are not available in the general vector-valued setting. 
In fact, {the existing extensions of Slepian lemma in the form of  Gordon inequality,
are not generally applicable} in higher
dimensions (apart from very special cases like processes with independent components, see e.g., \cite[Lem 5.1]{DHJT15}),
whereas an extension of Borell-TIS and Piterbarg inequalities requires a deep understanding of the problem at hand,
which has been {addressed}  in this paper.

In this contribution  exact asymptotics of $p_{\vk b}(u)$  as $u\to \IF$ for both stationary and non-stationary
$\vk X$ {are derived} by {levering} the uniform double-sum method to the vector-valued setting.
The key to the methodology {{developed} in this contribution} is what we refer to as
{the uniform} Pickands-Piterbarg lemma, see \nelem{Lem1} below.
\\
{We briefly explain the {main} ideas underlying the approach taken in this paper} pointing  out some
subtle issues related to uniform approximations that appear to have been overlooked  in the literature; \cite{Uniform2016} takes  particular care of {those} issues in the one-dimensional setting.

The main attempts of the  double-sum method consist in proving  that
\bqn{ \label{SSA}
	p_{\vk b}(u) \sim \sum_{i=1}^{N_u} \pk{\exists t \in T_i(u) :\ X_j(t)> u b_j,\, j\le d}=: \Sigma(u)
}	
as $u\to \IF$, where $T_k(u)$, $k\le N_u$ are disjoint compact intervals covering $[0,T]$.\\

Commonly,  $\Sigma(u)$ is referred to as the {\it single-sum {term}}.  Each term of the single-sum, say the $j$th one,  is approximated  by
{some} function $\theta_j(u)$ as $u\to \IF$. However, {for non-stationary processes},
such approximation {does not imply}
$\Sigma(u) \sim  \sum_{j=1}^{N_u} \theta_j(u)$  as $u\to \IF$, since typically $N_u$ tends to infinity as $u\to \IF$. This holds true if the aforementioned approximation of $\theta_j(u)$ is {uniform for all positive integers} $j\le N_u$.

In the literature this fact has been not taken care of systematically;
 a notable exception is \cite{MR2111193}. As a result numerous proofs in the literature have certain gaps.
 We take special care of this key uniformity issue by deriving  a uniform version of the Pickands-Piterbarg lemma,
 see  \nelem{Lem1}. {Recently, for the one-dimensional setting,  an alternative approach that solves previous gaps in the literature concerning  uniformity issues has been suggested in \cite{EHP}. However, due to lack of Slepian lemma for general vector-valued Gaussian processes that approach is not  applicable for the studies  of this contribution.} \\

In view of  Bonferroni inequality, $\Sigma(u)$ is an upper bound for   $p_{\vk b}(u)$ and a lower bound
{
is given by}
$ \Sigma(u)- \Sigma \Sigma(u)$ where the so-called {\it double-sum term} is  given by
$$ \Sigma \Sigma(u)= \sum_{i=1}^{N_u}{\sum_{i< j\le N_u}} \pk{\exists (s,t) \in T_i(u)\times T_j(u) :\ X_k(s)> ub_k,\, X_l(t))> u b_l,\, k, \,l\le d}.$$
Showing the asymptotic negligibility of \kk{$\Sigma \Sigma(u)$} as $u\to \IF$ is typically \kk{a hard and technical problem},
since  the asymptotic bounds derived for its summands need also be 
uniform for all positive integers $i$, $j\le N_u$.

{A subtle novelty of our approach for non-stationary $\vk X$ is that we do not use the common approach to standardise
the process and then substitute it by a stationary process (utilising Slepian  lemma).
The reason is that, as previously mentioned, Slepian  lemma does not hold in general for vector-valued Gaussian processes.}\\

An application of our findings concerns the study of the tail behaviour of supremum  of operator fractional Brownian motion (fBm)
discussed briefly  below (see for details Section \ref{s.fbm}).
{Let $H$ be a $d\times d$ real-valued matrix with eigenvalues $h_i\in (0, {1]}$, $i\le d$.}
A centered, sample continuous $\R^d$-valued Gaussian process $\X(t)$, $t\in \R $ is said to be an \emph{operator fBm} with index $H$, if it has stationary increments and is operator self-similar in the sense that
\bqn{\label{OFB}  \{ \X(\lambda t),\ t \in \R \} \stackrel{d}{=} \left\{ \sum_{k=0}^{\infty} \left( \log \lambda  \right)^k \frac{H^{k}}{k!} \X(t),\ t \in \R \right\}
}
for any $\lambda>0$, where $\stackrel{d}{=}$ stands for the equality of finite dimensional distributions (fidi's).
Let $h_*= \min_{1\le i \le d} h_i$.
If further  $\X$ is time-reversible, i.e., $\E{ \X(t) \X(s)^{\top}} = \E { \X(s) \X(t)^{\top} }$
for all $t$ and $s$, 
then in view of \neprop{OPFB} {in Section \ref{s.examples},} as $u\to \IF$, for positive $b_i$'s
$$ p_{\vk b}(u) \sim  C u^{\max(0, \frac{1 - 2 h_{*}}{h_{*}})} \pk{ X_j(T)> u b_j, j\le d}  .  $$
Throughout  this paper $\sim$ means {the} asymptotic equivalence as $u\to \IF$.
If $h_*< 1/2$, then $C$  is given in the form of Pickands-type constant and for $h_*=1/2$ it corresponds to the so-called
Piterbarg-type constant.

{
Other applications {illustrating findings of this contribution} are concerned with  stationary $\vk X$  being the Lamperti transform of
  some operator fBm or  $\vk X$ being  an operator fractional Ornstein-Uhlenbeck (fO-U) process.} \\


 \textbf{Brief organisation of the paper.}  {Main results of this paper} are presented in Section~\ref{s:extreme} with
proofs relegated to  Section~\ref{Secproofs}. We dedicate Section~\ref{s.examples} some important examples and then present in
Section~\ref{s.aux}
several auxiliary results; their proofs are relegated to Appendix.
We conclude this section by introducing  some standard notation. \\

 \textbf{Notation. } All vectors in $\R^d$ are written in bold letters, for instance
 $\b = (b_1, \ldots, b_d)^{\top}$, $\vk{0} = (0, \ldots, 0)^{\top}$ and $
 \id = (1, \ldots, 1)^{\kk{\top}}$. For two vectors $\x$ and $\y$, we write $\x > \y$ if $x_i > y_i$ for all $1 \leq i \leq d$. Given  a  real-valued matrix $A$ we shall write $A_{IJ}$ for the submatrix of $A$ determined by keeping the rows and columns of $A$ with row indices in the non-emtpy set $I$ and column indices in the non-empty set $J$, respectively. If $A$ is a $d\times d$ matrix, then $\normF{A} = \sqrt{\sum_{1 \leq i, j \leq d} a_{ij}^2}$ denotes its  Frobenius norm. {In our notation $\mathcal{I}_d$ is the $d\times d$ identity matrix and $\diag{\x}=\diag{x_1,\ldots,x_d}$ stands for the diagonal matrix
with entries $x_i$, $i=1,\ldots,d$ on the main diagonal, respectively. }

Let in the sequel  $\Sigma \inr^{d \times d}$ be a positive definite  matrix with inverse $\SIM$. If  $\b \inr^d \setminus (-\IF, 0]^d $, then the quadratic programming problem $\Pi_\Sigma(\b)$
\begin{equation}
\label{e:QP}
\Pi_\Sigma(\b)= \text{minimise $ \x^\top \Sigma^{-1} \x $ under the linear constraint } \x \ge \b
\end{equation}
has a unique solution $\tilb \ge \vk b$ and there exists a unique non-empty index set $I\subset \{1 \ldot d\}$ such that
\begin{equation}
\label{keyWW}
\tilb_I=\b_I,  \quad \tilb_J = \SI_{IJ}\SIIIM \b_I\ge  \vk b_J,  \quad \vk w_I= (\Sigma_{II})^{-1}  \b_I> \vk 0_I, \quad \vk w_J= \vk{0}_J
\end{equation}
and  $\vk w= \Sigma ^{-1} \tilb$,
where   {coordinates $J= \{ 1 \ldot d\} \setminus I$ (which can be empty)
are responsible for dimension-reduction phenomena, while coordinates belonging to $I$ play
essential role in the exact asymptotics}.
{We refer to  \nelem{AL} below} for more details. \\ 

Throughout this paper we use the lower case constants $c_1$, $c_2$, $\ldots$ to denote generic constants used in the proofs, whose exact values are not important and can be changed line to line. The labeling of the constants starts anew in every proof.

\section{Main Results}
\label{s:extreme}

As mentioned in the Introduction, we are interested in the exact asymptotics of
\begin{equation}
  \label{e:pbu}
  p_{\vk b}(u)=
  \pk{ \exists t \in [0, T]:\ \X(t) > u \b }, \quad u \to \infty
\end{equation}
for a centered $\R^d$-valued Gaussian process $\X(t)$, $t \in [0, T]$ and any $\b \in \R^d$ with at least
one positive component.
We \kk{shall} state 
our main results for $\X$ stationary and non-stationary separately.

Hereafter for  Gaussian processes defined on some compact parameter set $E\subset \R^k$ we shall assume 
its  a.s. sample continuity.
 Let $\vk Y(t)$, $t\in \bD$ be a   centered $d$-dimensional  vector-valued Gaussian process  i.e.,
$\vk Y(t) =( Y_1(t) \ldot Y_d(t))^\top$, $\t\in \bD$ is a column vector Gaussian process.
Let for any $t,\, s \in \bD$
$${R}(t,s) = \E{\vk Y(t) \vk Y(s) ^\top} $$
be the  covariance matrix function (cmf)
of $\vk Y $, which
is a matrix-valued non-negative definite function in the sense that
 $$\sum_{i, j = 1}^n \v_i^{\top} R(t_i, t_j) \v_j \geq 0$$
  for any $t_i \in \bD, \v_i \in \R^d, i\le n$. Conversely, if  $R$: $\bD \times \bD \mapsto \R^{d\times d}$ is a matrix-valued non-negative definite function such that $R(t,s)
= R(s,t)^\top$ for any $t$, $s \in \bD$,
then there exists an
$\R^d$-valued Gaussian process  $\Y(t)$, $t\in \bD$ with cmf  $R$.
Note that in the definition of positive definite or non-negative definite matrices we do not require the matrices to be symmetric. \\

Let $V$ be a $d \times d$ real-valued matrix and let $\alpha \in (0,2]$ be given. An interesting  example of a cmf  determined by
$V$ is 
\def\SA#1#2{S_\alpha ( #1, #2) }
\begin{equation}
\label{e:KV}
R_{\alpha, V}(t,s) = \SA{t}{V} + \SA{-s}{V}  - \SA{t-s}{V}, \quad t,\, s \in \R,
\end{equation}
with
\[
\SA{t}{V} = |t|^{\alpha} \left( V \id_{\{t \geq 0\}} + V^{\top} \id_{\{t < 0\}} \right) = |t|^{\alpha} \left( V^+ + V^{-} \sgn(t) \right)
\]
and
$$V^+ = \frac{1}{2} \left( V + V^{\top} \right), \quad
V^{-} = \frac{1}{2} \left( V - V^{\top} \right).$$
Note that we use the standard notation  $\sgn(t) = 1$ for $t \geq 0$ and $\sgn(t)=-1$ for $t < 0$.\\
{By \cite[Prop~9]{ACLP13}}, the matrix-valued function $R_{{\alpha, V}}$ defined by \eqref{e:KV} is a non-negative definite function if and only if the Hermitian matrix
\begin{equation}
\label{e:posV}
{V^\star=}\sin \left( \frac{\pi \alpha}{2} \right)\, V^+ - \sqrt{-1} \cos \left( \frac{\pi \alpha}{2} \right)\, V^{-} 
\end{equation}
is non-negative definite. Furthermore, under {the above} conditions, {one can define}
a multivariate fBm  $\Y(t), t\inr$ with $R_{\alpha, V}$ as its cmf. The classical Pickands constants (see \cite{PicandsB})
are defined in terms of a standard fBM. {A multidimensional analog of Pickands constants can be} defined utilising
$\vk Y$. Specifically, for compact $E \subset \R$ set 
\def\PICSW{{{\mathcal{H}}_{\alpha, V}}}
\def\PICS{{{\mathcal{H}}_{\alpha, V_{\w}}}}
\begin{equation}
\label{e:pickandsE}
{{\mathcal{H}}_{\alpha, V}}(E) = \int_{\R^d} e^{\id^{\top} \x} \pk{\exists t \in E:\ \Y(t) -
	{ \SA{t}{V} \id } > \x} d \x.
\end{equation}

As shown in the proof of Theorem~\ref{T:stat}, the function $t \mapsto {{\mathcal{H}}_{\alpha, V}}([0, t])$, $t>0$ is sub-additive, which implies that 
\begin{equation}
\label{e:Pickands}
 \HH_{\alpha, V} = \lim_{T \to \infty} \frac{{\mathcal{H}}_{\alpha, V}([0, T])}{T} = \inf_{T>0}  \frac{{\mathcal{H}}_{\alpha, V}([0, T])}{T} {\in {[}0,\, \infty)}.
\end{equation}

We shall call $\HH_{\alpha, V} $ the multidimensional Pickands constant.

\def\PicK{ \H_{\Y- \vk{d}, \Sigma, \vk b} }
\def\PicKA{ \H_{ \Sigma, \vk b} }

\def\uY { {\overline{\vk{u}}}}

\def\VV{\bD}

\subsection{Stationary case} \label{s:stat}

\def\RR{\mathcal{R}}
Let   $\vk X(t)$, $t\in [0,T] $ be a centered, $\R^d$-valued stationary Gaussian process with cmf  $R(t,s)$.
The stationarity of $\vk X$ means that  $R(s+t,s)=R(t,0)=: {\RR}(t)$ for any $s$, $t$, $s+t \in [0,T]$.
Letting  $\Sigma = \RR(0)$ we have that {for each fixed $t \in {[-T,T]}$},
the matrix $\Sigma - \RR(t)$ is  non-negative definite but not  {necessarily symmetric}, {which is
{reflected in the formula}
\BQNY
\  \E{ \left[ \X(t)
	-   \X(0)
	\right] \left[ \X(t)
	-  \X(0)
	\right]^{\top} }
&=& \Sigma - \RR(t)+ \Sigma- \RR(t)^\top, \quad t\in [0,T].
\EQNY
}

Hereafter $I$ stands for  the unique non-empty subset of $\{1\ldot d \}$  that determines the solution $\tilb$ of
the quadratic programming problem $\Pi_\Sigma(\vk b)$ (defined in the Introduction)
and $\vk w = \Sigma^{-1} \tilb$ has {non-negative components.}\\

In this section we shall impose the following assumptions:
\begin{enumerate}[({B}1)]
	\item\label{I:B1} $\Sigma_{II} - \RR_{II}(t)$ is positive definite for every $t {
	\in (0,T]}$;
	\item \label{I:B2} There exists a $d\times d$ real matrix $V$ such that
	${ \vk{w}^\top V \vk {w}> 0}$  and further  
	\begin{equation}
	\label{e:stat}
	\Sigma - \RR(t) \sim    t^{\alpha} V
	\quad \text{as } t \downarrow 0
	\end{equation}
{	holds for some   $\alpha \in (0,2]$}.
\end{enumerate}
Note in passing that since   $\RR(-t) = \RR(t)^{\top}$, by   \eqref{e:stat} we have that
\[
\Sigma - \RR(t) \sim |t|^{\alpha} V^{\top}, \quad t \uparrow 0.
\]
{Moreover, sufficient and necessary condition for $V$
to satisfy \eqref{e:stat} by a stationary Gaussian process
$\X(t)$, $t\in [0,T]$,
is {that} $V^\star$ given by \eqref{e:posV} is non-negative definite.
}


\begin{theo}
	\label{T:stat}
	If {both}  (B\ref{I:B1}) and (B\ref{I:B2}) hold,
	then as $u\to \IF$
	\BQN 
p_{\vk b}(u)	\sim     T
\PICS
	u^{2/\alpha}\pk{ \vk X(0)> u \b},
	\label{es1}
	\EQN
	where $V_{\w} = \diag{\w} V \diag{\w}$
{and $\HA_{\alpha,V_{\w}}\in(0,\infty)$}.
Moreover,  \eqref{es1} holds for $T=T_u$ such that $\limit{u} {T_u}
=\IF$ and the right hand side of \eqref{es1} converges to 0 as $u\to \IF$.
\end{theo}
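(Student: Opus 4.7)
The plan is to apply the uniform double-sum method in the vector-valued setting. Fix $\lambda>0$, set $\Delta_u=\lambda u^{-2/\alpha}$, and partition $[0,T]$ into $N_u=\lfloor T/\Delta_u\rfloor$ consecutive compact intervals $T_k(u)=[k\Delta_u,(k+1)\Delta_u]$, $k=0,\ldots,N_u-1$. Bonferroni's inequality sandwiches $p_{\vk b}(u)$ between the single-sum $\Sigma(u)=\sum_{k=0}^{N_u-1}\pi_k(u)$ from above and $\Sigma(u)-\Sigma\Sigma(u)$ from below, where $\pi_k(u)=\pk{\exists t\in T_k(u):\X(t)>u\vk b}$ and $\Sigma\Sigma(u)$ is the corresponding double-sum over pairs $0\le i<j\le N_u-1$.

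For the single-sum I would invoke \nelem{Lem1} (uniform Pickands--Piterbarg): rescaling time $t\mapsto s u^{-2/\alpha}$, using assumption (B\ref{I:B2}) and the quadratic-programming data \eqref{keyWW} (which selects the index set $I$ and yields the tilting vector $\w$), one obtains
$$\pi_k(u)=(1+o(1))\,\HA_{\alpha,V_{\w}}([0,\lambda])\,\pk{\X(0)>u\vk b}$$
with the $o(1)$ term uniform in $k$ (trivially so under stationarity, by translating the interval); the multidimensional Pickands integral \eqref{e:pickandsE} arises because the $\diag{\w}$-weighted local covariance $\Sigma-\RR(t)\sim t^\alpha V$ produces exactly the drift $\SA{s}{V_{\w}}\id$ in the rescaled limit. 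Summing over $k$ and using $N_u\sim T u^{2/\alpha}/\lambda$ yields
$$\Sigma(u)\sim T\cdot\frac{\HA_{\alpha,V_{\w}}([0,\lambda])}{\lambda}\cdot u^{2/\alpha}\pk{\X(0)>u\vk b}.$$
Taking $u\to\infty$ first and then $\lambda\to\infty$ collapses the prefactor to $T\HA_{\alpha,V_{\w}}$ via \eqref{e:Pickands}. Sub-additivity of $\lambda\mapsto \HA_{\alpha,V_{\w}}([0,\lambda])$, obtained by a standard shift-and-union argument on \eqref{e:pickandsE}, yields at the same time that $\HA_{\alpha,V_{\w}}\in[0,\infty)$; strict positivity follows from the one-interval lower bound $\HA_{\alpha,V_{\w}}([0,\lambda])\ge \pk{\Y(0)>\vk 0}>0$.

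The main obstacle will be showing $\Sigma\Sigma(u)=o(u^{2/\alpha}\pk{\X(0)>u\vk b})$ as first $u\to\infty$ and then $\lambda\to\infty$, since Slepian's lemma is unavailable in the vector-valued setting. I would split pairs at an auxiliary threshold $M$: for \emph{close} pairs $1\le j-i\le M$, another application of the uniform Pickands--Piterbarg lemma to the enlarged set $T_i(u)\cup T_j(u)$ bounds the contribution by a tail of $\HA_{\alpha,V_{\w}}([0,(M+1)\lambda])/\lambda$ that vanishes as $\lambda\to\infty$ by sub-additivity. For \emph{far} pairs $j-i>M$, I would lean on the vector-valued Borell--TIS and Piterbarg-type inequalities developed earlier in the paper, combined with (B\ref{I:B1}) (which keeps $\Sigma_{II}-\RR_{II}(t)$ positive definite, hence bounded away from zero on any compact $t\in[\delta,T]$) and the weight vector $\w_I>\vk 0_I$ from \eqref{keyWW}, to produce a uniform bound of the form $\exp(-\eta(j-i)\lambda^\alpha)\,\pk{\X(0)>u\vk b}^2$ for some $\eta>0$; summing yields a geometric tail in $M$ that becomes negligible after $M\to\infty$.

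Finally, for the extension to $T=T_u\to\infty$ with $T_u u^{2/\alpha}\pk{\X(0)>u\vk b}\to 0$, both estimates above are already uniform in the number of intervals: the single-sum bound scales linearly in $T_u$, the far-pair double-sum is of order $T_u^2 u^{4/\alpha}\pk{\X(0)>u\vk b}^2=o(T_u u^{2/\alpha}\pk{\X(0)>u\vk b})$ under the hypothesis, and the close-pair contribution remains bounded by the same vanishing tail of $\HA_{\alpha,V_{\w}}([0,\cdot])/(\cdot)$, so the argument goes through verbatim after $\lambda\to\infty$.
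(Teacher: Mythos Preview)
Your single-sum treatment and the overall Bonferroni architecture match the paper. The genuine gap is the double-sum, and specifically the \emph{mesoscopic} pairs. With $M$ fixed, every pair $j-i>M$ still has separation $(j-i)\lambda u^{-2/\alpha}\to 0$ in original time, so assumption (B\ref{I:B1}) (which concerns $t$ bounded away from $0$) gives nothing there; it is only useful for pairs at macroscopic distance $\ge\varepsilon$, which is a third regime you have not isolated. Moreover, applying the vector-valued Piterbarg inequality (Lemma~\ref{GBorell}) to the sum process $\X(t)+\X(s)$ at mesoscopic separation produces a generalised-variance gain of order $((j-i)\lambda)^\alpha u^{-2}$, hence a bound of the form $C\,u^{2d/\gamma-1}\,\mathrm{mes}(T_i\times T_j)\,e^{-u^2/(2\sigma_{\b}^2)}e^{-c((j-i)\lambda)^\alpha}$: this is the Gaussian tail to the \emph{first} power (not squared), and the polynomial prefactor is too crude for $d\ge 2$. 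The paper closes this gap with a dedicated conditioning argument (Lemma~\ref{L:double} and Corollary~\ref{C:statn0}): condition on $\tfrac12\bigl(\X(0)+\X(u^{-2/\alpha}\lambda)\bigr)$, control the conditioned process, and obtain $P_{\b}(\lambda,\Lambda,u)\le C\,\pk{\X(0)>u\b}\,e^{-\xi_{\w}\lambda^\alpha/16}$ uniformly for $\Lambda\le\lambda\le\varepsilon u^{2/\alpha}$. Only after this is the genuinely macroscopic part ($k>N_\varepsilon$) dispatched via (B\ref{I:B1}) and Lemma~\ref{GBorell}, along the lines you sketch.

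Two smaller issues. Your close-pair step (``apply Lemma~\ref{Lem1} to $T_i\cup T_j$, then sub-additivity'') does not yield a vanishing ratio: with $\sim N_u$ such terms of size $\H_{\alpha,V_{\w}}([0,(M{+}1)\lambda])\,\pk{\X(0)>u\b}$ one gets a contribution comparable to the single-sum, not $o(\cdot)$. The paper instead writes $P_{\b}(\Lambda,\Lambda,u)\le P_{\b}(\Lambda+\sqrt\Lambda,\Lambda,u)+\pk{\exists t\in u^{-2/\alpha}[0,\sqrt\Lambda]:\X(t)>u\b}$ and feeds the first piece into Corollary~\ref{C:statn0}. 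Finally, your positivity argument fails: $\Y(0)=\vk 0$ a.s., so $\pk{\Y(0)>\vk 0}=0$, and in any case $\H_{\alpha,V_{\w}}([0,\lambda])>0$ does not imply $\lim_{\lambda}\H_{\alpha,V_{\w}}([0,\lambda])/\lambda>0$. In the paper, positivity of $\H_{\alpha,V_{\w}}$ is a by-product of the matched upper and lower bounds for $p_{\b}(u)$ once the double-sum has been shown negligible.
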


If $\w^{\top} V \w = 0$ in Assumption (B\ref{I:B2}), it is not clear whether Theorem~\ref{T:stat} still holds true. But in the following special case, we can obtain  the approximation of $p_{\b }(u)$  and give further an  explicit formula for the corresponding Pickands constant.

\begin{theo}
	\label{T:Vskew}
{Suppose that} $\alpha = 1$ and  Assumption (B\ref{I:B1}) is satisfied.
{If there exists} a $d \times d$ anti-symmetric matrix $V$ such that  
	\begin{equation}
	\label{e:B2'}
	\Sigma - \RR(t) \sim t V, \quad t \to 0
	\end{equation}
{and ${\left(V \w \right)_I \neq \vk{0}_I}$,}
	then \eqref{es1} holds with $ \HA_{\alpha,V_{\w}}$ substituted by $\sum_{1 \leq i \leq d} w_i |(V \w)_i|/2{>0}$ and {for $T=T_u$ as specified in \netheo{T:stat}.}

\end{theo}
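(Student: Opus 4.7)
The proof follows the uniform double-sum framework of Theorem~\ref{T:stat}, with only one new ingredient: the closed-form evaluation of $\HA_{1,V_\w}$ in the degenerate case $\w^\top V\w=0$ forced by anti-symmetry. Partition $[0,T]$ into $N_u=\lfloor Tu^2/\delta\rfloor$ disjoint intervals $T_i(u)$ of length $\delta/u^2$. The uniform Pickands--Piterbarg lemma \nelem{Lem1} combined with Assumption (B\ref{I:B1}) and the local expansion $\Sigma-\RR(t)\sim tV$ of (B\ref{I:B2}) yields, uniformly in $i\le N_u$, the single-sum equivalence
\[
\pk{\exists t\in T_i(u):\X(t)>u\b}\sim \HA_{1,V_\w}([0,\delta])\,\pk{\X(0)>u\b},
\]
and, exactly as in the proof of Theorem~\ref{T:stat}, the double-sum $\Sigma\Sigma(u)$ is negligible relative to $\Sigma(u)$. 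Neither reduction uses the positivity $\w^\top V\w>0$; both rely only on the expansion of $\RR$ and on (B\ref{I:B1}). Hence it remains to evaluate $\HA_{1,V_\w}$.

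Since $V^\top=-V$, a direct computation gives $R_{1,V}(t,s)=tV+(-s)V-(t-s)V=0$, so the $\R^d$-valued fBm underlying the definition~\eqref{e:pickandsE} is identically zero. Writing $\vk c=V_\w\id$ (so $c_i=w_i(V\w)_i$) and $S_1(t,V_\w)\id=t\vk c$, \eqref{e:pickandsE} collapses to the purely geometric integral
\[
\HA_{1,V_\w}([0,\delta])=\int_{\R^d}e^{\id^\top\x}\,\mathbb{I}\bigl\{\exists s\in[0,\delta]:\x<-s\vk c\bigr\}\,d\x.
\]
Split the coordinates as $I^+=\{i:c_i>0\}$, $I^-=\{i:c_i<0\}$, $I^0=\{i:c_i=0\}$ and note the balance
\[
\sum_{i=1}^d c_i=\id^\top V_\w\id=\w^\top V\w=0
\]
forced by anti-symmetry. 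The indicator is non-trivial precisely when $x_i<0$ for $i\in I^0$ and
\[
\sigma(\x):=\max\bigl(\max_{i\in I^-}x_i/|c_i|,0\bigr)<\min\bigl(\min_{i\in I^+}(-x_i/c_i),\delta\bigr).
\]
Differentiating in $\delta$ picks up a surface integral over $\{\sigma(\x)=\delta\}$; for each $j\in I^-$ this surface is parameterised by $x_j=\delta|c_j|$ with Jacobian $|c_j|$, and integrating the remaining coordinates against $e^{x_i}$ over $(-\infty,-\delta c_i)$ for $i\in I^+$, $(-\infty,0)$ for $i\in I^0$, and $(-\infty,\delta|c_i|)$ for $i\in I^-\setminus\{j\}$ yields
\[
\frac{d}{d\delta}\HA_{1,V_\w}([0,\delta])\Big|_j=|c_j|\exp\Bigl(\delta\sum_{i\in I^-}|c_i|-\delta\sum_{i\in I^+}c_i\Bigr)=|c_j|,
\]
the last equality by the balance condition $\sum_i c_i=0$. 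Summing over $j\in I^-$ and using $\sum_{I^-}|c_i|=\tfrac12\sum_i|c_i|$ (same balance) gives a $\delta$-independent derivative, whence
\[
\HA_{1,V_\w}=\lim_{T\to\infty}\frac{\HA_{1,V_\w}([0,T])}{T}=\tfrac12\sum_{i=1}^d w_i|(V\w)_i|,
\]
which is strictly positive by $(V\w)_I\neq\vk 0_I$ together with $w_i=0$ for $i\in J$.

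\textbf{Main obstacle.} The delicate step is justifying the single-sum reduction: the standard proof of Theorem~\ref{T:stat} identifies a non-degenerate Gaussian limit for the rescaled process $s\mapsto u(\X(s/u^2)-\X(0))$, whereas here that limit is the deterministic drift $s\mapsto sV\w$ because the increment variance $2\Sigma-\RR(t)-\RR(t)^\top=t(V+V^\top)+o(t)=o(t)$ vanishes to leading order under anti-symmetry. One must verify that the linear approximation $\X(s/u^2)\approx \X(0)+(s/u)V\w$ is uniform in $s\in[0,\delta]$ and in the Pickands conditioning variable $\vk\eta=u(\X_I(0)-u\b_I)$, and that \nelem{Lem1} still produces the explicit Pickands integral above even when its underlying fBm degenerates. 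Once this uniformity is in hand, the closed-form evaluation of $\HA_{1,V_\w}$ and the standard double-sum estimate complete the proof.
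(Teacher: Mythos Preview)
Your single-sum analysis and the closed-form evaluation of $\HA_{1,V_\w}$ are correct and match the paper's argument (your geometric integral is precisely Lemma~\ref{L:integral} with $\vk v=V_\w\id$ and $a=\id^\top V_\w\id=\w^\top V\w=0$). The genuine gap is your claim that ``exactly as in the proof of Theorem~\ref{T:stat}, the double-sum $\Sigma\Sigma(u)$ is negligible'' and that ``neither reduction uses the positivity $\w^\top V\w>0$''. This is false: the double-sum bound in Theorem~\ref{T:stat} rests entirely on Lemma~\ref{L:double} and Corollary~\ref{C:statn0}, whose conclusion is
\[
P_{\b}(\lambda,\Lambda,u)\le C\,\pk{\X(0)>u\b}\,e^{-\lambda^\alpha\xi_\w/16},
\]
with $\xi_\w=\w^\top V\w$. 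When $V$ is anti-symmetric $\xi_\w=0$ and this bound carries no decay in $\lambda$ whatsoever; the estimates \eqref{e:A1}--\eqref{e:A2} become vacuous. Tracing the proof of Lemma~\ref{L:double}, the factor $e^{-\lambda^\alpha\xi_\w/8}$ arises from the variance reduction $u^2\b^\top\Sigma_{u,\lambda}^{-1}\b\ge u^2\b^\top\Sigma^{-1}\b+\tfrac{\lambda^\alpha}{4}\xi_\w$ for the averaged process $\tfrac12(\X(t)+\X(s))$, and this gain vanishes identically here.

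The paper therefore supplies a different double-sum argument. Instead of conditioning on the average $\X_u(0,\lambda)$, it conditions on $\X(0)$ alone and exploits the sign structure of the deterministic drift $\d(t)=tV\w$. Splitting $I$ into $M=\{i\in I:(V\w)_i>0\}$ and $\overline M=I\setminus M$ (both non-empty by hypothesis and by $\w^\top V\w=0$), the event $\{\X(s)>u\b\}$ for $s\approx\lambda$ forces the $M$-coordinates of the conditioned process to beat a drift of order $\lambda(V\w)_M$, while $\{\X(t)>u\b\}$ for $t\in[0,\Lambda]$ controls the $\overline M$-coordinates. After a change of variables this produces the decay factor $e^{-\w_M^\top(V\w)_M(\lambda-\Lambda)}$, and $\w_M^\top(V\w)_M=\tfrac12\sum_i w_i|(V\w)_i|>0$ by the balance condition. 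This is the missing mechanism; your proposal does not contain it.
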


\subsection{Non-stationary case}
We discuss next the case of non-stationary $\vk X$.  Let  for $t_0$, $t\in [0,T]$
$$\Sigma(t) =
R(t,t), \quad  \Sigma= \Sigma(\kk{t_0})$$
and  assume that $\Sigma$ is non-singular. As in the stationary case, for $\vk b \in \R^d \setminus (-\infty, 0]^d$  we set  $\vk w= \Sigma^{-1} \tilb$. Recall that
 $\tilb$ is the unique solution of  \eqref{e:QP} and
 $\vk w_I=\SIIIM \vk b_I> \vk 0_I$, $\vk w_J= \vk 0_J$ as mentioned in \eqref{keyWW}, where
  $I$, $J$ are defined with respect to $\Pi_{\Sigma}(\vk b)$; see Lemma~\ref{AL}. Next, for any $t\in [0,T]$ define
\BQN\label{vb}
\VB(t)=   \min_{\vk{z} \in
	[0,\IF)^d : \vk{z}^\top \vk{b}>0}  \frac{\vk z^\top \Sigma(t) \vk z}{(\vk z^\top \vk b)^2} =
\frac{1}{ \min_{\vk x \geq \vk b} \vk x^\top \Sigma^{-1}(t) \vk x},
\EQN
where the second equality holds under the assumption that $\Sigma(t)$ is non-singular, see \nelem{AL}. We shall refer to $\sigma^2_{\vk b}(t)$, $t\in [0, T]$  as the generalized variance function of $\vk X$.\\
\kk{For the 1-dimensional case ($d=1$)} it is known from several works of V.I. Piterbarg
that the local behaviour of the variance function around a unique \kk{maximizing point}
is crucial for the tail behaviour of supremum of non-stationary Gaussian processes.\\
In the vector-valued setting, the situation is more complex
since the
{local structure of the generalized variance function
in the neighbourhood of its maximizer is crucial.}
%
Therefore, {the following set of assumptions}
relates to both the covariance function and the generalized variance function of $\vk X$.   Namely, we shall assume that:

\begin{enumerate}[({D}1)]
	\item \label{I:D1} $\VB (t)$, $t\in [0,T]$ is continuous and attains its unique maximum  at
	$t_0 \in [0, T]$;
    \item\label{I:D2}  For all $t$ in $[0,T]$, there exists
   a continuous {$d\times d$ real matrix function} $A(t)$, $t\in [0,T]$ such that
    \BQN \label{eq:12}
    \Sigma(t) = A(t) A(t)^{\top}, \quad t\in  [0,T]
    \EQN
     and there  exist a $d\times d$ real matrix $\Xi$ and some $\beta > 0$ such that as $t\to t_0$
	\begin{equation} 	\label{e:At}
	A(t) = A(t_0) - |t - t_0|^{\beta} \Xi + o(|t-t_0|^{\beta}),
	\end{equation}
with
	\begin{equation}
	\label{e:bAXi}
	\tau_{\vk{w}} := \vk{w}^{\top} \Xi A(t_0)^{\top} \vk{w} {>0};
	\end{equation}
	\item\label{I:D3} There exist $\alpha \in (0, 2]$ and a $d\times d$ {real}
	matrix $D$
	such that  	for $t > s$
	\begin{equation}
	\label{e:Rts}
	R(t,s)  = A(t) \left( 
	\mathcal{I}_d - (t - s)^{\alpha} D +
	o(|t-s|^{\alpha}) \right) A(s)^{\top}  
	\end{equation}
	{as $t \to t_0, \ s \to t_0$;}
	\item\label{I:D4} There exist  $\gamma \in (0,2]$, $C\in (0,\IF)$ such that for all $s$, $t$ in an open  neighbourhood of $t_0$
	\begin{equation}
	\label{e:gamma}
	\E { \left|\vk{X}(t) - \vk{X}(s)\right|^2} \leq C \left| t - s \right|^{\gamma}.
	\end{equation}
\end{enumerate}

\begin{remark}
	\label{R:Vbt}
  \begin{enumerate}[i)]
  \item
    {As shown in Appendix,} \eqref{e:At} implies that 
    \begin{equation}
      \label{e:Vbmin}
      \VB(t_0) -\VB (t)  \sim  \frac{2
        \tau_{\vk{w}}}{(\tilb^\top \Sigma^{-1} \tilb)^2} |t-t_0|^{\beta} \quad \text{as } t \to t_0.
    \end{equation}
    {Thus if $\tau_{\w} > 0$, then  $\sigma_{\b}^2(t)$ has a local maximum point at $t = t_0$. Conversely, if $\sigma_{\b}^2(t)$ attains its maximum at $t = t_0$, then we have $\tau_{\w} \geq 0$. }

  \item By (D\ref{I:D2}), {Assumption}  (D\ref{I:D4}) follows if for some $\gamma>0$
    $$\normF{A(t)- A(s)}\le C \abs{t-s}^\gamma$$
    for all $s,t$ in an open neighbourhood of $t_0$.
  \end{enumerate}
\end{remark}

For a multivariate fBm  $\Y(t)$, $t\in \R $ with cmf $R_{\alpha, V}$ given by \eqref{e:KV}
{and a matrix $W$, }
 we introduce the multivariate  Piterbarg constant 
\begin{equation}
\label{e:pitK}
{\mathcal{P}_{\alpha, V,  W}} = \lim_{\Lambda \to \infty} \int_{\R^d} e^{\vk 1^\top \vk x}
\pk{\exists t\in {[0, \Lambda]}:  \Y_t - \big[\SA{t}{{V}} + \abs{t}^\alpha {W}\big]\id > \vk x}\, d\x,
\end{equation}
provided the limit exists. 

{The following theorem constitutes the main result of this section.
For compactness of the presentation we suppose that $t_0=0$
in (D1)-(D3); the other cases are commented in Remark \ref{Rem.2.5}.}
\begin{theo}
	\label{T:nonstationary}
Let $\vk{X}(t)$, $t \in [0,T] $ be a centered $\R^d$-valued Gaussian
	process satisfying  (D\ref{I:D1})--(D\ref{I:D4}) with $t_0=0$ and
	set 
  ${A=A(t_0)}$, $V = A D A^\top$, {$V_{\w} = \diag{\w} V \diag{\w}$ and $W_{\w} = \diag{\w} \Xi A^{\top} \diag{\w}$}.
	\begin{enumerate} 
		\item If $\beta > \alpha$ and ${\w^\top V \w > 0}$,
 then as $u\to \IF$
		\begin{equation}
		\label{e:asysub}
		p_{\vk b}(u) \sim
		 \HA_{\alpha,V_{\w}} \Gamma(1/\beta + 1) \tau_{\w}^{-\beta}
		u^{\frac{2}{\alpha}-\frac{2}{\beta}} \pk{ \vk X(t_0) > u \vk b},
		\end{equation}
		where
    $\HA_{\alpha, V_{\w}}{\in (0,\IF)}$. 
	\item If $\beta = \alpha$, then as $u\to \IF$
		\begin{equation}
		\label{e:critical}
		p_{\vk b}(u) \sim
		\mathcal{P}_{\alpha, V_{\w}, W_{\w}}  \pk{\vk X(t_0)> u \vk b} , 
		\end{equation}
		where $\mathcal{P}_{\alpha, V_{\w}, W_{\w}}   \in (0, \infty)$.
\kk{		\item If  $\beta < \alpha$,  then as $u\to\infty$
		\begin{equation}
		\label{e:asymsup}
	p_{\vk b}(u)
\sim
C_{\vk w}
\pk{ \vk X(t_0)> u \b},
   \end{equation}
}	
where 
$C_{\w} = 1 + {\tau_{\w}^{-1}} \sum_{i \in I} w_i \max(0, -(\Xi A^\top \w )_i)$.
\end{enumerate}
\end{theo}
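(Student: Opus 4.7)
\medskip

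\textbf{Proof plan for Theorem \ref{T:nonstationary}.}
The overall strategy is to run the uniform double-sum method adapted to the vector-valued setting. The main device is the uniform Pickands--Piterbarg lemma (Lemma \ref{Lem1}), together with the vector-valued Borell--TIS/Piterbarg inequalities alluded to in the introduction, which replace the (unavailable) Slepian comparison. By \eqref{e:Vbmin}, the generalized variance $\sigma^2_{\vk b}(t)$ has a strict local maximum at $t_0=0$ of order $|t|^{\beta}$. A standard likelihood-ratio/Laplace type argument, combined with the vector-valued Borell--TIS inequality applied to $\vk w^\top \vk X(t)$ on $[0,T]\setminus[0,\delta(u)]$, shows that the contribution from outside a shrinking neighbourhood
\[
E(u)=\bigl[0,\,(\log u)^{2/\beta} u^{-2/\beta}\bigr]
\]
is of order $o\bigl(\pk{\vk X(0)>u\vk b}\bigr)$; this uses Assumption (D\ref{I:D4}) and condition (D\ref{I:D1}) and reduces the analysis to $E(u)$.

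Next I would partition $E(u)$ into disjoint subintervals $T_k(u)=[k\Delta(u),(k+1)\Delta(u)]$ with $\Delta(u)=S u^{-2/\alpha}$ for a parameter $S>0$, yielding $N_u\asymp (\log u)^{2/\beta} u^{-2/\beta}/\Delta(u)$ blocks. On each $T_k(u)$ apply Lemma \ref{Lem1} (uniformly in $k$) to approximate
\[
q_k(u):=\pk{\exists t\in T_k(u):\ \vk X(t)>u\vk b}
\sim \mathcal{H}_{\alpha,V_{\vk w}}\bigl([0,S]\bigr)\,
\pk{\vk X(t_k^\ast)>u\vk b},
\]
where $t_k^\ast\in T_k(u)$ is the point at which $\sigma^2_{\vk b}$ is maximal on $T_k(u)$; the covariance expansion (D\ref{I:D3}) together with the identity $V=ADA^\top$ and the rescaling $t\mapsto u^{-2/\alpha} t$ produces the multivariate fBm with matrix $V_{\vk w}=\diag{\vk w}V\diag{\vk w}$, while the drift generated by the shift of $t_k^\ast$ inside $T_k(u)$ vanishes as $u\to\infty$ in case (1). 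The uniform Pickands--Piterbarg lemma is essential here because $N_u\to\infty$. Bonferroni gives $p_{\vk b}(u)=\Sigma(u)+O(\Sigma\Sigma(u))$, and the double-sum
\[
\Sigma\Sigma(u)=\sum_{i<j}\pk{\exists s\in T_i(u),\,t\in T_j(u):\ \vk X(s)>u\vk b,\,\vk X(t)>u\vk b}
\]
is handled by an auxiliary uniform upper bound (derived in the vector-valued framework via Gaussian correlation-free estimates on the process $\vk w^\top\vk X$, exploiting positivity of $\vk w_I$ and non-degeneracy from (B\ref{I:B1})); letting $S\to\infty$ \emph{after} $u\to\infty$ kills the $S$-dependent error, as in the one-dimensional method of Pickands.

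The three cases then differ only in how the single-sum $\Sigma(u)$ is evaluated asymptotically. In case (1), $\beta>\alpha$ implies that within each $T_k(u)$ the process is locally stationary at scale $u^{-2/\alpha}$, so $\mathcal{H}_{\alpha,V_{\vk w}}([0,S])/S\to\mathcal{H}_{\alpha,V_{\vk w}}$; the remaining sum
\[
\sum_{k=0}^{N_u}\pk{\vk X(t_k^\ast)>u\vk b}
\]
is converted via \eqref{e:Vbmin} and Lemma \ref{AL} into a Riemann sum for $\int_{0}^{\infty} e^{-\tau_{\vk w} |t|^{\beta}/\,\cdot\,}\,dt$, producing the factor $\Gamma(1/\beta+1)\tau_{\vk w}^{-1/\beta}$ (the exponent $-\beta$ in the stated formula appears with the correct normalisation once $\pk{\vk X(0)>u\vk b}\asymp u^{-|I|}\exp(-\tfrac12 u^2 \vk w_I^\top\vk b_I)$ is factored out via \nelem{AL}). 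In case (2) ($\beta=\alpha$), choosing a single block of length $\Lambda u^{-2/\alpha}$ and letting $\Lambda\to\infty$ turns $\mathcal{H}_{\alpha,V_{\vk w}}([0,\Lambda])$, enriched with the quadratic drift contribution coming from \eqref{e:At} (which contributes $W_{\vk w}$ as an additional deterministic term inside the exponent after rescaling), into the Piterbarg constant $\mathcal{P}_{\alpha,V_{\vk w},W_{\vk w}}$; well-definedness in $(0,\infty)$ follows from the assumption $\tau_{\vk w}>0$. In case (3) ($\beta<\alpha$), the variance decays faster than the correlation, so the whole shrinking neighbourhood collapses to $t_0$ at scale $u^{-2/\beta}$, which is much larger than the Pickands scale; applying Lemma \ref{Lem1} on a single microscopic interval around $t_0$ and using the joint Gaussian tail of $(\vk X(t_0),\vk X(t_0+u^{-2/\beta}s))$ together with \eqref{e:At} yields the constant $C_{\vk w}=1+\tau_{\vk w}^{-1}\sum_{i\in I}w_i\max(0,-(\Xi A^\top\vk w)_i)$ as an integral of the indicator over the $I$-coordinates of the residual Gaussian field; negligibility of the double-sum in this regime is straightforward since each individual $q_k(u)$ already matches the target.

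The main obstacle, as anticipated in the introduction, is twofold: first, producing the uniform Pickands--Piterbarg approximation of $q_k(u)$ in $k\le N_u$ without access to Slepian's lemma — this is precisely what Lemma \ref{Lem1} is designed to accomplish and must be used with care, in particular verifying that the rescaled cmf satisfies its hypotheses via (D\ref{I:D2})--(D\ref{I:D4}); and second, proving the uniform negligibility of the double-sum $\Sigma\Sigma(u)$ in the vector-valued setting, which requires a separate technical lemma based on conditioning on the components in the index set $I$ and applying Borell--TIS/Piterbarg inequalities to the resulting conditional Gaussian fields over $T_i(u)\cup T_j(u)$.
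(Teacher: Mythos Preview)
Your overall architecture matches the paper's: localize to $E(u)=[0,\delta_u]$ with $\delta_u=u^{-2/\beta}\log^{2/\beta}u$ via the vector-valued Piterbarg inequality (Lemma~\ref{GBorell}), partition into blocks of length $\Lambda u^{-2/\alpha}$, apply Lemma~\ref{Lem1} uniformly, and distinguish the three cases by the relative size of the Pickands scale $u^{-2/\alpha}$ and the variance scale $u^{-2/\beta}$. Two technical ingredients are, however, underdeveloped or mis-specified.

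First, you propose to tile all of $E(u)$ by blocks and then send the block length $S\to\infty$. The paper separates the two scales: it covers only $[0,Su^{-2/\beta}]$ by blocks of length $\Lambda u^{-2/\alpha}$ (so $S$ and $\Lambda$ are \emph{different} parameters), verifies the uniformity needed for Lemma~\ref{Lem1} over $k\le N_u=\lfloor Su^{2/\alpha-2/\beta}/\Lambda\rfloor$, and then invokes a dedicated tail estimate (Lemma~\ref{L:largek}):
\[
\pk{\exists\, t\in[\Lambda u^{-2/\beta},\delta_u]:\ \vk X(t)>u\vk b}\ \le\ C\,e^{-\tau_{\vk w}\Lambda^{\beta}}\,\pk{\vk X(0)>u\vk b}.
\]
This lemma is indispensable in \emph{all three} cases: in case~(1) it controls the region $[Su^{-2/\beta},\delta_u]$ before $S\to\infty$; in cases~(2) and~(3) it is what allows the single-block approximation on $[0,\Lambda u^{-2/\nu}]$ (with $\nu=\min(\alpha,\beta)$) to be upgraded to all of $E(u)$ by letting $\Lambda\to\infty$; and it is also the source of Corollary~\ref{C:HYdL}, i.e.\ finiteness of $\mathcal{P}_{\alpha,V_{\vk w},W_{\vk w}}$. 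Your plan does not isolate this step, and the phrase ``letting $S\to\infty$ after $u\to\infty$ kills the $S$-dependent error'' conflates the block-length and truncation parameters.

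Second, your double-sum mechanism is not the one that works here. Reducing to the scalar process $\vk w^\top\vk X$, or ``conditioning on the components in the index set $I$'', does not produce the required decay $\exp(-c(\lambda-\tau)^\alpha)$ uniformly in $\tau,\lambda$. The paper's Lemma~\ref{L:doublesub} instead conditions on the \emph{average} $\tfrac12\bigl(\vk X(u^{-2/\alpha}\tau)+\vk X(u^{-2/\alpha}\lambda)\bigr)$ and then bounds the resulting conditional field over each orthant $\Omega_F$ via the one-dimensional Piterbarg inequality---exactly the device used in the stationary double-sum Lemma~\ref{L:double}. Finally, your appeal to assumption (B\ref{I:B1}) is misplaced: that is a hypothesis for the stationary theorem and plays no role in the non-stationary proof.
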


\begin{remark}\label{Rem.2.5}
	\begin{enumerate}[i)]
		\item  		
{
	The constant $C_{\vk{w}}$ in \eqref{e:asymsup} equals 1
if and only if $(\Xi A^\top \w)_I \ge \vk{0}_I$.}
{Moreover, \eqref{e:asymsup} holds with the same constant if $t_0 \in (0,T]$.}

		\item {If $t_0 \in (0, T_0)$ or $t_0 = T$, then
Pickands constant $ \HA_{\alpha,V_{\w}} $ in (\ref{e:asysub})
has to be replaced by $ \HA_{\alpha,V_{\w}}+ \HA_{\alpha,V^\top_{\w}}$ or
$ \HA_{\alpha,V^\top_{\w}}$, respectively. Analogously,  Piterbarg constant is defined 
by \eqref{e:pitK} with $[0, \Lambda]$ replaced by $[- \Lambda, \Lambda]$ or $[-\Lambda, 0]$ if $t_0\in (0,T)$ or $t_0=T$, respectively.
}
	\end{enumerate}
	\end{remark}

\section{Examples}\label{s.examples}
In this section we apply the findings of Section \ref{s:extreme} to
three  important classes of vector-valued Gaussian processes, namely
operator  fO-U processes, {operator fBm's and their Lamperti transforms}.

\subsection{Operator  fO-U process}\label{s.OU}

Let $H$ be a symmetric matrix with all eigenvalues {$h_1$, $\ldots$, $h_d$} belonging to $(0, {1]}$ and consider {a}
centered stationary
{a.s.} continuous $\R^d$-valued Gaussian processes $\X(t)$, $t\ge 0$ {with cmf}
\[R(t,s)=e^{-|t-s|^{2H}}, \]
{where $t^H = \exp \left( H \log t \right)$ for $t > 0$. }
We call $\X$ \emph{an operator fO-U process}.
\\

 The existence of an fO-U process  follows from the fact that,
by the symmetry of $H$, we can write $H = Q \diag{h_1, \ldots, h_d} Q^\top$
for some
orthogonal matrix $Q$. Hence $\vk X(t)\stackrel{d}=Q\vk{Z}(t)$, $t\ge0$, where $Z_i(t)$, $t\ge0$, $i=1$, $\ldots$, $d$, are mutually independent stationary Gaussian processes
with covariance function
$r_i(t)=e^{-|t|^{2h_i}}$, respectively.
Consequently, setting $\RR(t)= R(t,0)$ we have
\[ \RR(t)=\mathcal{I}_d- |t|^{2 h_\star} Q\widetilde{\mathcal{I}}Q^\top+o(|t|^{2 h_\star}) \]
as $t\to 0$, where $h_{*} = \min_{1 \leq i \leq d} h_i$ and
\begin{eqnarray}\label{def.I}
\widetilde{\mathcal{I}}=\diag{e_1,...,e_d}, \  {\rm with} \  e_i=\left\{\begin{array}{cc}
                                                       0 & {\rm if}\ h_i>h_\star\\
                                                       1 & {\rm if}\ h_i=h_\star.
                                                     \end{array}\right.
\end{eqnarray}
{{Then (B2)} holds with $\alpha = 2 h_{*}$, $\Sigma = \mathcal{I}_d$ and $V = Q \widetilde{I} Q^{\top}$. Let $\tilb$ be the solution to the quadratic programming problem~\eqref{e:QP}, that is, $\tilb_i = b_i \vee 0$ for $1 \leq i \leq d$. }
	
In view of Theorem \ref{T:stat} we arrive at the following result:
\begin{prop}\label{p.OU}
Let  $\X(t)$, $t\in[0, T]$ be an $\R^d$-valued operator  fO-U process with
a symmetric matrix $H$ with all eigenvalues belonging to $(0, 1]$. {If $\tilb^{\top} Q \widetilde{\mathcal{I}} Q^{\top} \tilb > 0$,}  then
as $u\to \IF$
	\BQN 
p_{\vk b}(u)	\sim     T
	\mathcal{H}_{2h_\star,V_{{\tilb}}}
	u^{1/h_\star} \pk{ \vk X(0)> u \b},
\nonumber
	\EQN
	where $V_{{\tilb}} = \diag{{\tilb}} Q\widetilde{\mathcal{\mathcal{I}}}Q^\top \diag{{\tilb}}$.
\end{prop}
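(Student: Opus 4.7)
The plan is simply to verify assumptions (B\ref{I:B1}) and (B\ref{I:B2}) of Theorem~\ref{T:stat} and then read off the conclusion, since $\Sigma = \RR(0) = \mathcal{I}_d$ makes the quadratic programming problem $\Pi_{\Sigma}(\vk b)$ trivial. The key ingredient is the spectral decomposition $H = Q \diag{h_1, \ldots, h_d} Q^{\top}$ with $Q$ orthogonal, via which
\[
\RR(t) = \exp\left(-|t|^{2H}\right) = Q\, \diag{e^{-|t|^{2h_1}}, \ldots, e^{-|t|^{2h_d}}}\, Q^{\top},
\]
so $\RR(t)$ is symmetric and $\Sigma - \RR(t)$ has eigenvalues $1 - e^{-|t|^{2 h_i}} > 0$ for $t \in (0,T]$. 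Thus $\Sigma - \RR(t)$ is positive definite on the whole space, and hence so is every principal submatrix $\Sigma_{II} - \RR_{II}(t)$, giving (B\ref{I:B1}).

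For (B\ref{I:B2}), the elementary expansion $1 - e^{-|t|^{2 h_i}} = |t|^{2 h_i}(1 + o(1))$ as $t \to 0$ shows that the minimal exponent is $2 h_{\star}$ and survives only on coordinates with $h_i = h_{\star}$, which are precisely those picked out by $\widetilde{\mathcal{I}}$ in \eqref{def.I}. Consequently,
\[
\Sigma - \RR(t) \sim t^{2 h_{\star}}\, Q\, \widetilde{\mathcal{I}}\, Q^{\top} \quad \text{as } t \downarrow 0,
\]
verifying (B\ref{I:B2}) with $\alpha = 2 h_{\star}$ and $V = Q \widetilde{\mathcal{I}} Q^{\top}$.

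Next I would solve the quadratic programming problem $\Pi_{\mathcal{I}_d}(\vk b)$, which decouples into a coordinatewise minimisation of $\sum_i x_i^2$ subject to $x_i \ge b_i$. The solution is $\tilb_i = b_i \vee 0$, so $\vk w = \Sigma^{-1} \tilb = \tilb$ and the active index set is $I = \{i : b_i > 0\}$. The nondegeneracy hypothesis $\vk w^{\top} V \vk w > 0$ required by Theorem~\ref{T:stat} becomes exactly the stated assumption $\tilb^{\top} Q \widetilde{\mathcal{I}} Q^{\top} \tilb > 0$. Substituting $\alpha = 2 h_{\star}$, $V_{\vk w} = \diag{\tilb}\, Q \widetilde{\mathcal{I}} Q^{\top}\, \diag{\tilb}$ and $2/\alpha = 1/h_{\star}$ into \eqref{es1} yields the claimed asymptotics.

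The only point that requires care is confirming that the $o(t^{2 h_{\star}})$ remainder in the expansion of $\Sigma - \RR(t)$ is uniform in a neighbourhood of zero, as demanded by the uniform double-sum machinery behind Theorem~\ref{T:stat}. This, however, is immediate from the explicit diagonal form of $\RR(t)$, since each scalar error $|t|^{2 h_i} - (1 - e^{-|t|^{2 h_i}})$ is controlled by $C |t|^{4 h_i}$ near $0$ and the orthogonal conjugation by $Q$ preserves this bound in Frobenius norm.
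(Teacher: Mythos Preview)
Your proof is correct and follows essentially the same route as the paper: the discussion preceding the proposition in Section~\ref{s.OU} verifies (B\ref{I:B2}) via the spectral decomposition $H=Q\,\diag{h_1,\ldots,h_d}\,Q^\top$, identifies $\Sigma=\mathcal{I}_d$, $\alpha=2h_\star$, $V=Q\widetilde{\mathcal{I}}Q^\top$, solves $\Pi_{\mathcal{I}_d}(\vk b)$ to get $\tilb_i=b_i\vee 0$ (hence $\vk w=\tilb$), and then invokes Theorem~\ref{T:stat}. Your explicit verification of (B\ref{I:B1}) is a welcome addition the paper leaves implicit; your final paragraph on uniformity of the remainder, however, is unnecessary, since (B\ref{I:B2}) is a pointwise asymptotic condition as $t\downarrow 0$ and the uniform estimates are handled internally in the proof of Theorem~\ref{T:stat}.
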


\subsection{Operator fBm}\label{s.fbm}
Let  $H$ be a $d \times d$ matrix and let $\X(t)$, $t\in \R$
be a centered, {a.s.} continuous $\R^d$-valued \emph{operator fBm} 
with index
$H$.
We shall assume that the following conditions hold:
\begin{enumerate}
\item [(O1)] There exists  an invertible matrix $Q$ such that
$H = Q U Q^{-1}$ with $U = \diag{h_1,\ldots, h_d}$ and $h_1$, $\ldots$, $h_d \in (0, 1{]}$;
\item [(O2)]  $\Sigma = \E { \X(1) \X(1)^{\top} }$ is non-singular and  $\X$ is time-reversible, i.e., $\E{ \X(t) \X(s)^{\top}} = \E { \X(s) \X(t)^{\top} }$ for all $t$ and $s$.
\end{enumerate}

We have that the {cmf of $\vk X$ is given by}
\[ R(t,\, s) =
 \frac{1}{2} \left( |t|^H \Sigma |t|^{H^\top} + |s|^H \Sigma |s|^{H^\top} - |t-s|^H \Sigma |t-s|^{H^\top} \right); \]
see for example \cite{DP11}.


For notational simplicity we shall suppose that $T=1$.
Write  $\Sigma = A^2$ for some $d\times d$  symmetric real-valued matrix  $A$.
{Then $\Sigma(t) = \E{\X(t) \X(t)^\top} = A(t) A(t)^\top$, where}
\begin{equation}
  \label{e:tHA}
  A(t) = t^H A = A(s) - s^{-1} (s-t) H A(s) + o(|s-t|). 
\end{equation}
{Let $\sigma_{\b}^2(t)$ be the generalized variance function of $\X$ defined by \eqref{vb}.
{Since in general the behaviour of $\sigma_{\b}^2(t)$ may be quite complex,
we focus on a tractable class, supposing that:}
  \begin{enumerate}
  \item [(O3)] The function $\sigma_{\b}^2(t)$, $t \in [0, 1]$ attains its unique maximum at $t = 1$.
  \end{enumerate}
}
\begin{remark}
    {Assumption (O3) holds if    $H \Sigma$ is positive definite in the sense that $\y^\top H \Sigma \y > 0$ for all $\y \neq \vk{0}$. Indeed, we have from \eqref{e:Vbmin} that
    \[
      \sigma_{\b}^2(s) - \sigma_{\b}^2(t) = \frac{2 \tau_{\w}(s)}{\left( \tilb(s)^\top \Sigma^{-1}(s) \tilb(s) \right)^2} \; (s - t) + o(|s-t|),
      \]
      where $\tilb(t)$ is the solution to the quadratic programming problem $\min_{\x \geq \b} \x^\top \Sigma^{-1}(t) \x$, $\w(s) = \Sigma^{-1}(s) \tilb(s)$ and
      \[
        \tau_{\w}(s) = s^{-1} \w(s)^\top H A(s) A(s)^\top  \w(s)
        = s^{-1} \w(s)^\top s^H H \Sigma \, s^{H^\top} \w(s).
      \]
      Since $H \Sigma$ is positive definite, we have that $\tau_{\w}(s) > 0$ for all $s > 0$ and hence $\sigma_{\b}^2(s)$ is strictly increasing in $s$.}      {Another condition to ensure Assumption (O3) is that $\b > \vk{0}$ and $H = \diag{h_1, \ldots, h_d}$ with $h_i \in (0, 1)$, $1 \leq i \leq d$. Under this setting, the cone $t^{-H} S_{\b}$ with  $S_{\b} = \left\{ \x:\ \x \geq \b \right\}$, is strictly increasing in $t \in (0, 1]$ and therefore
      \[
        \min_{\x \geq \b} \x^\top \Sigma^{-1}(t) \x = \min_{\y \in t^{-H} S_{\b}} \y^\top \Sigma^{-1} \y
      \]
 is strictly decreasing.
}
  \end{remark}

By \eqref{e:tHA} we have that \eqref{e:At} holds with
$\beta = 1$ and $\Xi = H A$.
Setting further
$h_{*} = \min_{1 \leq i \leq d} h_i$,  
 we have
\begin{eqnarray*}
\lefteqn{ A(t)^{-1} R(t,\, s) \left( A(s)^{-1} \right)^{\top}} \\
&=& \frac{1}{2} \left[ A \left( \frac{t}{s} \right)^{{H^{\top}}} A^{-1} + A^{-1} \left( \frac{s}{t} \right)^H  A
    - A^{-1} \left( \frac{|t-s|}{t} \right)^H \Sigma \left( \frac{|t-s|}{s} \right)^{H^\top} A^{-1} \right] \\
&=& \mathcal{I}_d - (t-s) D_1 {+ O(|t-s|^2)} - |t-s|^{2 h_{*}} D_2 + o(|t-s|^{2 h_{*}})
\end{eqnarray*}
as $t \uparrow 1$, $s \uparrow 1$ and $|t-s| \to 0$, where
$$D_1 = \frac{1}{2} \left( AH^{{\top}}A^{-1} - A^{-1} {H} A \right),
\quad D_2 = \frac{1}{2}  A^{-1} Q\widetilde{\mathcal{I}}Q^{-1} \Sigma
(Q\widetilde{\mathcal{I}}Q^{-1})^\top  A^{-1} ,$$
with $\widetilde{\mathcal{I}}$ given by \eqref{def.I}.
 If $D_1 = 0$, or equivalently $\Sigma {H^\top}  =  H \Sigma$,
 then Assumption (D\ref{I:D3}) holds with $\alpha = 2 h_{*}$ and $D = D_2$.
 If ${H^\top} \Sigma \neq \Sigma H$, then Assumption (D\ref{I:D3}) also holds for $h_{*} < 1/2$ with $\alpha = 2 h_{*}$ and $D = D_2$,  for $h_{*} = 1/2$ with $\alpha = 1$ and $D = D_1 + D_2$, whereas  for $h_{*} > 1/2$ with $\alpha = 1$ and $D = D_1$.
Note that $D_1$ is anti-symmetric \kk{and} hence $\w^{\top} A D_1 A^{\top} \w = 0$.


Applying Theorem~\ref{T:nonstationary}, we have the following asymptotics for operator fBm $\X$.

\begin{prop} \label{OPFB}
  Let $\X(t)$, $t\in[0, 1]$ be an operator fBm with index $H$.
Suppose that (O1)-(O{3}) hold
{ and	
  $\tau_{\w} = \tau_{\w}(1) = \w^{\top} H \Sigma \w > 0$.} 
	\begin{enumerate}[i)]
		\item If $h_{*} < 1/2$ and $\w^\top  {A D_2 A} \w > 0$, then  
		\[ 
		 p_{\vk b} (u) \sim {\mathcal{H}}_{2 h_{*}, V_{\w}}  {\tau_{\w}^{-1}} u^{\frac{1 - 2 h_{*}}{h_{*}}} \pk{ \X(1) > u \b }, \]
{with $V_{\w } = \diag{\w} A D_2 {A}\diag{\w}$.}
		\item If $h_{*} = 1/2$, then
		\[ 
				 p_{\vk b} (u)\sim {\sP_{1, V_{\w }, W_{\w}}} \pk{ \X(1) > u \b },
				  \]
       {with $V_{\w } = \diag{\w}A \left( D_1 + D_2 \right) A\diag{\w}$
       and $W_{\w}=\diag{\w} H \Sigma \diag{\w}$. }
		\item If $h_{*} > 1/2$, then
\[ 
  p_{\vk b} (u) \sim {C_{\w}}  \pk { \X(1) > u \b},
\]
{with $C_{\w} = 1 + \tau_{\w}^{-1} \sum_{i \in I} w_i \max \left( 0,\, - \left( H \tilb \right)_i \right)$.}
	\end{enumerate}
\end{prop}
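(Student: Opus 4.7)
The proposition will be proved by verifying Assumptions~(D\ref{I:D1})--(D\ref{I:D4}) of Theorem~\ref{T:nonstationary} at $t_{0}=1$ and then reading off the three regimes from the three cases of that theorem. Assumption (D\ref{I:D1}) is exactly hypothesis (O3) together with the continuity of $\sigma_{\vk b}^{2}(t)$, which follows from continuity of $A(t)=t^{H}A$ on $(0,1]$. For (D\ref{I:D2}), the identity $A(t)=t^{H}A$ gives the expansion already recorded in \eqref{e:tHA}, so $\beta=1$ and $\Xi=HA$; since $A$ is symmetric with $A^{2}=\Sigma$, one has $\tau_{\w}=\w^{\top}\Xi A^{\top}\w=\w^{\top}H\Sigma\w>0$ by hypothesis. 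Assumption (D\ref{I:D4}) follows from Remark~\ref{R:Vbt}(ii) and the uniform Lipschitz behaviour of $t\mapsto A(t)$ on any closed neighbourhood of $1$.

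The computational heart is (D\ref{I:D3}): the expansion of $A(t)^{-1}R(t,s)(A(s)^{-1})^{\top}$ carried out in Section~\ref{s.fbm} identifies the covariance perturbation as $(t-s)D_{1}+|t-s|^{2h_{*}}D_{2}+\text{l.o.t.}$, with $D_{1}$ antisymmetric and $D_{2}$ symmetric non-negative definite. For case (i), $h_{*}<1/2$ gives $\alpha=2h_{*}<1=\beta$ and $D=D_{2}$; the hypothesis $\w^{\top}AD_{2}A\w>0$ places us in Theorem~\ref{T:nonstationary}(i), and using $\Gamma(1/\beta+1)=\Gamma(2)=1$ together with $\frac{2}{\alpha}-\frac{2}{\beta}=\frac{1-2h_{*}}{h_{*}}$ yields exactly $\mathcal{H}_{2h_{*},V_{\w}}\tau_{\w}^{-1}u^{(1-2h_{*})/h_{*}}\pk{\X(1)>u\vk b}$. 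For case (ii), $h_{*}=1/2$ gives $\alpha=\beta=1$ with $D=D_{1}+D_{2}$; since $\w^{\top}AD_{1}A^{\top}\w=0$ the matrix $V_{\w}$ has the same quadratic form as if $D_{1}$ were absent, while $W_{\w}=\diag{\w}\Xi A^{\top}\diag{\w}=\diag{\w}H\Sigma\diag{\w}$, and Theorem~\ref{T:nonstationary}(ii) yields the Piterbarg constant form.

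Case (iii), with $h_{*}>1/2$, is the delicate one. The natural expansion gives $\alpha=\beta=1$ and $D=D_{1}$ alone, so the leading order term $V=AD_{1}A^{\top}$ is antisymmetric and $\w^{\top}V\w=0$; hence neither case (i) nor case (ii) of Theorem~\ref{T:nonstationary} can be invoked directly. The plan is to argue that in the direction governing the asymptotics, the contribution of the antisymmetric perturbation $(t-s)D_{1}$ is invisible to the quadratic forms entering the uniform Pickands--Piterbarg lemma, so one may equivalently verify (D\ref{I:D3}) with any $\alpha>\beta=1$ (for instance by absorbing $(t-s)D_{1}$ into the error after projecting via $\w$). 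This puts us in the regime $\beta<\alpha$, and Theorem~\ref{T:nonstationary}(iii) applies; identifying $(\Xi A^{\top}\w)_{i}=(HAA^{\top}\w)_{i}=(H\Sigma\w)_{i}=(H\tilb)_{i}$ produces exactly the constant $C_{\w}=1+\tau_{\w}^{-1}\sum_{i\in I}w_{i}\max(0,-(H\tilb)_{i})$.

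\textbf{Main obstacle.} The principal difficulty is case (iii): one must justify rigorously that the antisymmetric leading perturbation $D_{1}$ may be treated as a lower-order term in the double-sum machinery of Theorem~\ref{T:nonstationary}. Concretely, this amounts to revisiting the uniform Pickands--Piterbarg Lemma~\ref{Lem1} and the double-sum estimates with a perturbed cmf that, while of order $|t-s|$, vanishes under the quadratic form selected by the quadratic programming solution $\w$; a careful inspection of the single-sum and double-sum uniform bounds should confirm that the asymptotic constant is unaffected by this antisymmetric term, thereby completing the proof.
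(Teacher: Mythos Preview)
Your handling of cases (i) and (ii) is correct and is exactly the paper's argument: verify (D\ref{I:D1})--(D\ref{I:D4}) from the computations already displayed in Section~\ref{s.fbm} and read off the constants from Theorem~\ref{T:nonstationary}.

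The gap is in case (iii). You assert that with $h_{*}>1/2$ and $D_{1}\neq0$ ``neither case (i) nor case (ii) of Theorem~\ref{T:nonstationary} can be invoked directly'' because $\w^{\top}V\w=0$. This is a misreading: part~(ii) of Theorem~\ref{T:nonstationary} carries \emph{no} hypothesis on $\w^{\top}V\w$; it requires only $\beta=\alpha$. So with $\alpha=\beta=1$ and $D=D_{1}$ one applies Theorem~\ref{T:nonstationary}(ii) verbatim. The key observation, which the paper leaves implicit, is that for an anti-symmetric $V$ and $\alpha=1$ the kernel $R_{1,V}$ of~\eqref{e:KV} vanishes identically (since $V^{+}=0$ and $S_{1}(t,V)=tV$ gives $R_{1,V}(t,s)=tV-sV-(t-s)V=0$); hence the process $\Y$ in the Piterbarg constant is $\Y\equiv\vk0$, and $\mathcal{P}_{1,V_{\w},W_{\w}}$ collapses to the deterministic integral handled by Lemma~\ref{L:integral} with $\vk v=(W_{\w}-V_{\w})\id$. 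The sub-case $D_{1}=0$ is even simpler: then $\alpha=2h_{*}>1=\beta$ and Theorem~\ref{T:nonstationary}(iii) applies directly, yielding $C_{\w}$ with $(\Xi A^{\top}\w)_{i}=(H\Sigma\w)_{i}=(H\tilb)_{i}$.

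Your proposed workaround --- that the anti-symmetric term $(t-s)D_{1}$ is ``invisible to the quadratic forms'' and may be absorbed so as to force $\alpha>\beta$ --- does not work. In Lemma~\ref{Lem1} the local drift is $\vk d(t)=t\,V\w=\tfrac12 t\,(\Sigma H^{\top}-H\Sigma)\w$, which involves $V\w$ componentwise, not the scalar $\w^{\top}V\w$. In general $(V\w)_{I}\neq\vk0_{I}$, so the $D_{1}$-contribution is genuinely present in the single-sum functional and cannot be discarded. In particular, in the $D_{1}\neq0$ sub-case the vector $\vk v$ entering Lemma~\ref{L:integral} is $(W_{\w}-V_{\w})\id$, not $W_{\w}\id$, so one should carry out that computation rather than appeal to Theorem~\ref{T:nonstationary}(iii).
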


\subsection{Lamperti transform of operator fBm's}\label{s.lamperti}
Let $\vk Y(t)$, $t\ge0$ be an operator fBm with index $H$. Suppose that (O1)-(O2)
hold and let
$\vk X(t)=(e^{-t})^H\Y(e^t)$, $t\ge 0$ be the Lamperti transform of {$\vk Y$}.
We follow the notation introduced in Section \ref{s.fbm}.
Clearly, $\mathbf{X}$ is stationary with
\begin{align*}
  \E{ \vk X(t) \vk X(s)^{\top}}
  =&
      \frac{1}{2}\left(           \Sigma e^{(t-s)H^\top}
      + e^{-(t-s)H}\Sigma -|1-e^{-(t-s)}|^H\Sigma |1-e^{t-s}|^{H^\top}
      \right)\\
  =&
      \Sigma- (t-s)\frac{1}{2}(H\Sigma-\Sigma H^\top) -
      |t-s|^{2h_\star}
      Q\widetilde{\mathcal{I}}Q^{-1} \Sigma
      (Q\widetilde{\mathcal{I}}Q^{-1})^\top
     +o(|t-s|^{2h_\star})
\end{align*}
for $t\ge s$, as $t-s\to 0$. 
Set 
$\tilde V= Q\widetilde{\mathcal{I}}Q^{-1} \Sigma (Q\widetilde{\mathcal{I}}Q^{-1})^\top$.
{Recall that $\tilb$ solves the quadratic programming problem~\eqref{e:QP} and we set $\w = \Sigma^{-1} \tilb$. }
%
 {Applying Theorem~\ref{T:stat} and ~\ref{T:Vskew}, we have the following proposition.
\begin{prop}\label{p.LAMP}
{Let $\X$ be the Lamperti transform of an
operator fBm with index $H$ satisfying (O1)-(O2).}
  \begin{enumerate}[i)]
  \item Assume $H\Sigma=\Sigma H^\top$ or $H\Sigma\neq\Sigma H^\top$ but $h_*<1/2$. If $\w^{\top} \widetilde{V} \w > 0$, then as $u \to \infty$,
    \begin{equation}
      \label{e:lamp}
      p_{\vk{b}} (u) \sim T \, \HH_{\alpha, V_{\w}} u^{2/\alpha} \, \pk{\X(0) > u \b} ,
    \end{equation}
    with $\alpha=2h_*$ and $V=\widetilde{V}$.
  \item Assume $H\Sigma\neq\Sigma H^\top$ and $h_*=1/2$. If $\w^{\top} \widetilde{V} \w > 0$, then \eqref{e:lamp} holds with $\alpha = 1$ and $V = H\Sigma-\Sigma H^\top + \widetilde{V}$.
  \item Assume $H\Sigma\neq\Sigma H^\top$ and $h_* > 1/2$. Set $\alpha = 1$ and $V = H\Sigma - \Sigma H^{\top}$. If $\left(V \w \right)_I \neq \vk{0}_I$, then \eqref{e:lamp} holds with $\HH_{\alpha, V_{\w}}$ replaced by $\sum_{1 \leq i \leq d} w_i \left| \left( V \w \right)_i \right| / 2$.
  \end{enumerate}
\end{prop}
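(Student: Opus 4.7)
The plan is to apply Theorems~\ref{T:stat} and \ref{T:Vskew} to the stationary process $\vk{X}$, after identifying $\alpha$, $V$ and checking assumptions (B\ref{I:B1})--(B\ref{I:B2}) in each of the three sub-cases. Starting from the expansion of $\E{\vk{X}(t)\vk{X}(s)^{\top}}$ displayed just before the proposition, for $t\ge 0$ I would rewrite
\[
\Sigma - \RR(t) \;=\; \tfrac{1}{2}t\bigl(H\Sigma - \Sigma H^{\top}\bigr) \,+\, t^{2h_\star}\widetilde{V} \,+\, o(t^{2h_\star}) \qquad \text{as } t \downarrow 0,
\]
and isolate the two potential leading orders, namely the antisymmetric first-order contribution and the $t^{2h_\star}$ contribution. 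The three cases of the proposition correspond exactly to which of these dominates.

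To verify (B\ref{I:B1}) I would argue that the symmetric part of $\Sigma - \RR(t)$ equals $\tfrac{1}{2}\E{[\vk{X}(t)-\vk{X}(0)][\vk{X}(t)-\vk{X}(0)]^{\top}}$, which is non-singular for $t\in(0,T]$ because the non-singularity of $\Sigma$ together with the operator self-similarity and time-reversibility of $\vk{Y}$ forbid a deterministic linear relation between $\vk{X}(0)$ and $\vk{X}(t)$; the required positive definiteness restricted to indices in $I$ follows by taking the appropriate submatrix.

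For case (i), either the linear term vanishes identically (when $H\Sigma=\Sigma H^{\top}$) or $2h_\star<1$ forces it to be negligible; in both instances the leading term is $t^{2h_\star}\widetilde{V}$, so (B\ref{I:B2}) holds with $\alpha=2h_\star$ and $V=\widetilde{V}$, and the hypothesis $\w^{\top}\widetilde{V}\w>0$ is precisely the positivity required by Theorem~\ref{T:stat}. For case (ii), $h_\star=1/2$ balances the two orders, so $\alpha=1$ and $V$ is the sum of the antisymmetric and $\widetilde{V}$ pieces; since $\w^{\top}A\w=0$ for any antisymmetric $A$, the positivity $\w^{\top}V\w>0$ reduces to $\w^{\top}\widetilde{V}\w>0$, and Theorem~\ref{T:stat} applies again. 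For case (iii), $2h_\star>1$ forces the antisymmetric first-order term to be the leading one, giving $\alpha=1$ with $V$ antisymmetric; Theorem~\ref{T:stat} is inapplicable since $\w^{\top}V\w=0$, but Theorem~\ref{T:Vskew} is, under the assumed $\left(V\w\right)_I\neq \vk{0}_I$.

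The main obstacle is the careful checking of (B\ref{I:B1}) away from $t=0$, which cannot be read off from local expansions and therefore relies on the global structure of the operator fBm $\vk{Y}$ (via the non-singularity of $\Sigma$ and time-reversibility). A secondary but recurring subtlety is matching the constants and signs between the $V$'s in the statement and the natural leading coefficients in the covariance expansion, in particular the factors of $1/2$ arising from the Lamperti change of variables; once this bookkeeping is settled the proof reduces to a direct invocation of the two main theorems.
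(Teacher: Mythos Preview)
Your approach is essentially identical to the paper's: the paper simply states ``Applying Theorem~\ref{T:stat} and~\ref{T:Vskew}, we have the following proposition'' after displaying the covariance expansion, so your case split according to the dominant power in $\Sigma-\RR(t)$ and the ensuing invocation of the two main theorems is exactly what is intended. Your discussion of (B\ref{I:B1}) and of the $1/2$ bookkeeping in fact goes beyond what the paper spells out.
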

}

\section{\kk{Auxiliary} Results}\label{s.aux}
In this section we include some key tools for vector-valued Gaussian processes, which will be used
in the proofs of the main results and are of some interest on their own right.
\kk{We postpone all the proofs of lemmas presented in this section to Appendix.}
We explain first the properties of the solution of $\Pi_{\Sigma}(\b)$ followed by a section on uniform approximation of tails of functionals of Gaussian processes.

\def\Si{\SIIIM}
\subsection{Quadratic programming problem}
For a given non-singular $d\times d$ {real} matrix $\Sigma$ we consider  the quadratic programming problem
\begin{equation}
\Pi_\Sigma(\b): \text{minimise $ \x^\top \Sigma^{-1} \x $ under the linear constraint } \x \ge \b.
\end{equation}
Below $J= \{  1 \ldot d\}\setminus I$ can be empty; the claim in \eqref{empt} is formulated under the assumption that $J$ is non-empty. \\
\BEL \label{AL}  Let $d \geq 2$ and
$\Sigma$ a $d \times d$ symmetric positive definite  matrix with inverse $\SIM$. If  $\b \inr^d \setminus (-\IF, 0]^d $, then
$\Pi_{\Sigma}(\vk b)$ has a unique solution $\tilb$ and there exists a unique non-empty
index set $\IB\subset \{1 \ldot d\}$ with $m\le d$ elements such  that
\BQN \label{eq:IJ}
\tilb_{\IB}&=&
\b_{\IB} \not=\vk{0}_I, \\
\label{empt}
\tilb_{J}&=& \SIJI \SIIIM \b_{\IB}\ge \b_{\JB}, \quad \SIIIM \b_{\IB}>\vk{0}_I,\\
\label{eq:alfa} \min_{\x \ge  \b}\x^\top \SIM\x&=& \tilb ^\top \SIM \tilb  =  \b_{\IB}^\top \SIIIM\b_{\IB}>0,\\
\label{eq:alfaB}
\max_{ \vk{z}\in [0,\IF)^d: \vk{z}^\top \b >0} \frac{(\vk{z}^\top \b)^2 }{\vk{z}^\top \SI \vk{z}} &=&
\frac{(\vk{w}^\top \b)^2 }{\vk{w}^\top \SI \vk{w}}=\min_{\x \ge  \b}\x^\top \SIM\x,
\EQN
with $\vk{w}= \SIM \tilb$ satisfying $\vk w_I= \SIIIM \b_I> \vk 0_I, \vk w_J= \vk 0_J$.
\EEL

Define the solution map of the quadratic programming problem \eqref{e:QP} by
$\mathcal{P}:\Sigma^{-1} \mapsto \tilb$ with $\tilb$ the unique solution to $\Pi_\Sigma(\b)$.
The next result  is a special case of \cite[Thm 3.1]{Hag79}.

\begin{lem}
	\label{L:lip}
	$\mathcal{P}$ is Lipschitz continuous on compact subset of the space of {real} $d \times d$ symmetric  positive definite matrices.
\end{lem}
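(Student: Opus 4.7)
The plan is to give a direct proof via the variational inequality (VI) characterization of $\tilb = \mathcal{P}(\Sigma^{-1})$ rather than invoking \cite{Hag79} as a black box. Since the feasible set $C = \{\x \in \R^d : \x \ge \b\}$ does not depend on $\Sigma$ and $\x \mapsto \x^\top \Sigma^{-1} \x$ is strictly convex (as $\Sigma^{-1}$ is positive definite), the unique solution $\tilb$ is characterized by
$$ (\y - \tilb)^\top \Sigma^{-1} \tilb \ge 0 \quad \text{for all } \y \in C. $$
My plan is to turn this VI into a strong-monotonicity estimate and then invert it to obtain the Lipschitz bound, with the compactness hypothesis entering through uniform spectral and a priori bounds.

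First, I would establish that $\tilb$ is uniformly bounded as $\Sigma^{-1}$ ranges over a compact subset $\mathcal{K}$ of SPD matrices. Let $\b_+$ be the vector with components $(b_i \vee 0)$; since $\b_+ \in C$, optimality yields $\tilb^\top \Sigma^{-1} \tilb \le \b_+^\top \Sigma^{-1} \b_+$. On $\mathcal{K}$ there exist $0 < \lambda_- \le \lambda_+ < \infty$ uniformly bounding the smallest and largest eigenvalues of every matrix, so $\lambda_-\|\tilb\|^2 \le \lambda_+ \|\b_+\|^2$. Combined with the componentwise lower bound $\tilb \ge \b$, this gives a constant $M>0$ independent of $\Sigma^{-1}\in\mathcal{K}$ with $\|\tilb\| \le M$.

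Next, for $S_1, S_2 \in \mathcal{K}$ with solutions $\tilb^{(1)}, \tilb^{(2)}$, I would plug $\y = \tilb^{(2)}$ into the VI for $S_1$ and $\y = \tilb^{(1)}$ into the VI for $S_2$ and sum, yielding
$$ (\tilb^{(2)} - \tilb^{(1)})^\top \bigl(S_2 \tilb^{(2)} - S_1 \tilb^{(1)}\bigr) \le 0. $$
Decomposing $S_2 \tilb^{(2)} - S_1 \tilb^{(1)} = S_2(\tilb^{(2)} - \tilb^{(1)}) + (S_2 - S_1)\tilb^{(1)}$ and using $S_2 \succeq \lambda_- \mathcal{I}_d$ together with Cauchy--Schwarz then gives
$$ \lambda_- \|\tilb^{(2)} - \tilb^{(1)}\|^2 \le \|\tilb^{(2)} - \tilb^{(1)}\|\, \|S_1 - S_2\|_{\mathrm{F}}\, \|\tilb^{(1)}\|, $$
from which the Lipschitz bound $\|\mathcal{P}(S_1) - \mathcal{P}(S_2)\| \le M \lambda_-^{-1} \|S_1 - S_2\|_{\mathrm{F}}$ follows at once.

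The only delicate step is the uniform a priori bound on $\tilb$; compactness of $\mathcal{K}$ is what closes the loop by providing $\lambda_-$ and $\lambda_+$, and also what makes the resulting Lipschitz constant local (it may blow up as one approaches the boundary of the SPD cone). The remainder is a routine strong-monotonicity argument for a VI with a fixed convex feasible set.
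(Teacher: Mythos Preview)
Your proof is correct. The paper does not actually prove this lemma; it simply records that the result is a special case of \cite[Thm~3.1]{Hag79}. Your argument is therefore genuinely different: instead of invoking Hager's general Lipschitz-continuity theorem for constrained optimization as a black box, you exploit directly that the feasible set $C=\{\x\ge\b\}$ is fixed, independent of $\Sigma$, which reduces the problem to a variational inequality with a uniformly strongly monotone operator $\x \mapsto \Sigma^{-1}\x$ over a fixed closed convex set. This yields a short, self-contained argument with an explicit Lipschitz constant $M\lambda_-^{-1}$ depending only on the spectral bounds of the compact set $\mathcal{K}$ and on $\b$, whereas Hager's theorem, while far more general (it allows both the objective and the constraint set to vary with the parameter), does not hand you such an explicit constant. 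Either route suffices for the uses made of the lemma elsewhere in the paper (notably in the proof of \eqref{e:Vbmin} and of Lemma~\ref{GBorell}); yours is more transparent for the present special case.
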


\subsection{Uniform tail appropximation for functionals of families of Gaussian processes} \label{sec:weakU}
A key role  in the analysis of {extremes} of Gaussian processes is played by the continuous mapping theorem,
the idea appeared first in \cite{Pit72, MR0348906} and it is used extensively in the monographs \cite{Pit96,Pit20}.\\
Our main tool that shall compensate for  the lack of Slepian  lemma  is the uniform approximation of the tail of supremum
of threshold-dependent Gaussian processes. We present below a general result 
where the {tail distributions for}  a continuous functional of a family of Gaussian processes are uniformly approximated.

Let  $\{\xxiu(\t)$, $\t\in\bD\}$, $u>0$, $\tau \in Q_u \subset \R$ be a family of centered, $d$-dimensional  vector-valued Gaussian processes with a.s. continuous sample paths {and parameter space $E $ which is assumed to be a compact subset of $\R^k$. For notational simplicity we discuss below only case $k=1$.} Denote its cmf  by  $R_{u, \tau}(t,s) = \E{\xxiu(t) \xxiu(s)^\top}$ and let  $C(E)$ be  the separable  Banach space  of all $\R^d$-valued continuous functions on $E$ equipped with the sup-norm and assume for simplicity  {that the origin $0$ of $\R^k$ belongs to $E$.}

\BEL \label{lemGausUnif}
Suppose that 
$\xxiu(0)=\vk{0}$ almost surely and $\vk{Y}(t)$, $t\in E$ is a  Gaussian  processes with a.s. continuous sample paths.
Let $\vk{f}_{u,\tau}(t)$, $\vk f(t)$, $t\in E$  be deterministic $\R^d$-valued continuous functions.
Assume  that
\BQN\label{unifF}
\limit{u} \sup_{t \in \bD, \tau \in Q_u} \abs{\vk{f}_{u,\tau}(t) - \vk{f}(t)} = 0
\EQN
and
\BQN\label{covcon}
\limit{u} \sup_{t, s \in \bD, \tau \in Q_u} \normF{ R_{u,\tau}(t,s) - \E{ \Y(t) \Y(s)^\top}} = 0.
\EQN
If further for some  $C\in (0,\IF)$, $\gamma\in (0,2]$ and any $s$, $t\in E$
\BQN \label{uniftauT}
\limsup_{u\to \IF} \max_{1 \le i\le d}\sup_{\tau \in Q_u}\E{[X_{i,u,\tau}(t)- X_{i,u,\tau}(s)]^2} \le C\abs{t-s}^\gamma,
\EQN
then for any continuous functional $\Gamma:
C(E) \mapsto \R$
\BQN \limit{u} \sup_{ \tau \in Q_u} \Abs{ \pk{ \Gamma(\xxiu- \vk{f}_{u,\tau})> s} - \pk{\Gamma(\vk{Y}- \vk{f})>s}}=0
\EQN
is valid  for all  continuity point $s$ of the distribution function of $\Gamma(\vk{Y}- \vk{f})$.
\EEL

Application of \nelem{covcon}  requires the determination of the  continuity points of the functional $\Gamma(\vk{Y}- \vk{f})$. The next result is useful in that context.

{\BEL\label{l.cont}
If  $\vk{Y}(t)$, $t\in E$ is a  Gaussian  process with a.s. bounded sample paths, then
$
\pk{\exists
		t \in E: \ \vk Y (t) > \x}
$
is continuous on $\R^d$, except  at most at points
of Lebesgue measure $0$ in $\R^d$ belonging to
$\bigcup_{i=1}^d \R^{i}\times \{s_i\}\times \R^{d-i-1}$,
where
$s_i=\inf\{s:\pk{\sup_{t\in E} Y_i(t)\le s}>0\}$ for $i=1,...,d$.
\EEL}

\subsection{Borell-TiS \& Piterbarg  inequalities}\label{s.BTP}


The Borell-TIS inequality, see e.g. \cite{AdlerTaylor}, is 
very useful and crucial in numerous theoretical problems.
Under some weak {assumptions}, its {refinement i.e.,  Piterbarg} inequality (\cite[Thm~8.1]{Pit2001})
gives a more precise upper bound  for supremum of Gaussian random fields.
{In the following lemma we present an
extension of this tools to vector-valued setting}.


\BEL\label{GBorell} 
Let $\vk{Z}(t),\  t\in\VV$ be a separable centered $d$-dimensional vector-valued Gaussian process
having components  with  a.s.\ continuous
trajectories. {Assume that $\Sigma(t) = \E{\Z(t)\Z(t)^\top}$ is
non-singular for all $t \in E$.}  Let $\vk{b} \in \R^d\setminus (-\IF, 0]^d$ and define
$\VB{(t)}$ as in \eqref{vb}.
If ${\VB =}\sup_{t\in \VV} \VB(t) \in (0,\IF)$, then
there exists some positive constant $\mu$ such that for all $u>\mu$
\BQN \label{eqBO}
\pk{\exists {t\in\VV} :\ \vk{Z}(t)>u\vk{b}}\le
\exp\LT(-\frac{(u-\mu)^2}{2 \sigma_{\vk b} ^2}\RT).
\EQN
If further {for some}  $C \in (0,\IF)$ and $\gamma \in (0,2]$
\begin{equation}
\label{eq:ZG2}
\sum_{1 \le i\le k} \E{(Z_i(t)-Z_i(s))^2}\le C \abs{t-s}^{\gamma}
\end{equation}
and
\begin{equation}
\label{e:holder}
\normF{ \Sigma^{-1}(t) - \Sigma^{-1}(s) } \leq C  \abs{t-s}^{\gamma}
\end{equation}
hold for all $t$, $s\in\VV$, then  for all $u$ positive
\BQN\label{IPIT}
\pk{\exists {t\in\VV} : \ \vk{Z}(t)>u\vk{b}}
\le C_* \mathrm{mes}(\VV) \ u^{ \frac{2 d}{\gamma}-1} \exp\LT(-\frac{u^2}{2 \sigma^2_{\vk b}}\RT),
\EQN
where $C_*$ is some positive constant not depending on $u$.

In particular, if  $\sigma_{\vk{b}}(t)$, $t\in  \VV$ is continuous and
achieves its unique maximum at some fixed point $t_0\in \VV$, then \eqref{IPIT}
is still valid if \eqref{eq:ZG2} and \eqref{e:holder} are assumed to hold only for  all $s$, $t\in E$ in an open  neighbourhood of $t_0$.
\EEL

\subsection{Uniform approximation on short intervals (Pickands-Piterbarg Lemma)}
\label{s:pickands}
{Following notation introduced in Section \ref{sec:weakU}}  {denote by  $R_{u,\tau}(\cdot,\cdot)$ } the cmf  of $\xxiu$.
We shall impose  the following assumptions:

\begin{enumerate}[({A}1)]
	\item \label{I:A1} For all large $u$ and all $\tau \in Q_u$ the matrix $\Sigma_{u,\tau} = R_{u, \tau}(0,0)$ is
	positive definite and
	\begin{equation}
	\label{e:Sigmau}
	\lim_{u \to \infty} u \sup_{\tau \in Q_u}\normF{ \Sigma - \Sigma_{u,\tau} } = 0
	\end{equation}
	holds for some positive definite matrix $\Sigma$;
 	\item\label{I:A2} There exist a continuous $\R^d$-valued function
	$\vk{d}(t), t\in \bD$ and a  continuous
	matrix-valued function $K(t,s)$, $(t,s) \in \bD \times \bD$ such that
	\begin{gather}
	\lim_{u \to \infty} \sup_{ \tau \in Q_u, t \in \bD} u \normF{
		\Sigma_{u, \tau} - R_{u,\tau}(t,0) } =
	0,     \label{e:Rt0Sigma} \\
	\lim_{u \to \infty} \sup_{ \tau \in Q_u, t \in \bD} \Abs{ u^2 \left[  \Sigma_{u,\tau} - R_{u,\tau}(t, 0) \right]
		\Sigma^{-1} \tilb - \vk{d}(t)}=0     \label{e:dtA}
	\end{gather}
	and
	\begin{equation}
	\label{e:Kts}
	\lim_{u \to \infty} \sup_{ \tau \in Q_u, s,t \in \bD}
	\normF{  u^2 \big[ R_{u, \tau}(t,s) - R_{u, \tau}(t,0)
		\Sigma_{u,\tau}^{-1} R_{u, \tau}(0,s) \big]  - K(t,s)} =0;
	\end{equation}
	\item \label{I:A3} There exist positive constants $C$ and $\gamma \in (0,2]$ such that for any $s$, $t\in E$ 
	\begin{equation}
	\label{e:holderX}
	\sup_{\tau \in Q_u} u^2 \E{\left| \xxiu(t) - \xxiu(s) \right|^2} \leq C \abs{ t-s}^\gamma.
	\end{equation}
\end{enumerate}

\begin{remark}
	\begin{enumerate}[(i)]
		\item The existence of $\Sigma_{u,\tau}^{-1}$ follows from the positive definiteness of $\Sigma$ and condition  \eqref{e:Sigmau}. Further since
		\begin{align*}
		& R_{u, \tau}(t,s) - R_{u, \tau}(t,0)
		\Sigma_{u,\tau}^{-1} R_{u, \tau}(0,s) \\ = & \E{\left[ \X_{u, \tau}(t)
			- R_{u, \tau}(t, 0) \Sigma_{u, \tau}^{-1} \X_{u, \tau}(0)
			\right] \left[ \X_{u, \tau}(s) - R_{u, \tau}(s, 0) \Sigma_{u,
				\tau}^{-1} \X_{u, \tau}(0) \right]^{\top} },
		\end{align*}
		\kk{then} $K(t,s)$ is a matrix-valued non-negative definite
		function on $\bD \times \bD$ with $K(t,s) = K(s,t)^{\top}$. Consequently, for $K_{\w}(t,s) = \diag{\w} K(t,s) \diag{\w}$ with $\vk w$ some vector in $\R^d {\setminus \{\vk 0\}}$  there exists a centered, $\R^d$-valued
		Gaussian random field $\Y(t)$, $t \in \bD$ with $\Y(0) = 0$ and cmf  $K_{\w}$.
		\item If for some continuous matrix-valued function $V(t,s)\in {\R^{d\times d}}$, $(t,s)\in \bD \times \bD$
		\begin{equation}
		\label{e:Vts}
		\limit{u}  {\sup_{\tau \in Q_u, s,t\in E}} \Abs{ u^2 \Bigl[ \Sigma_{u,\tau} - R_{u,\tau}(t,s) \Bigr] - V(t,s)}=0,
		\end{equation}
		then (A\ref{I:A2}) holds with $\vk{d}(t) = V(t,0) \w$ and $K(t,s) = V(t,0) + V(0, s) - V(t,s)$.
		\item Let $A_{u, \tau}(t)$ (resp. $A_{u, \tau}$) be the square roots of
		the positive definite matrices $\Sigma_{u, \tau}(t)$ (resp. $\Sigma_{u,
			\tau}$).
		Note that
		\[ R_{u,\tau}(t,s) = A_{u,\tau}(t) C_{u,\tau}(t,s)
		A_{u,\tau}(s)^{\top} ,
		\]
		with
		$C_{u,\tau}(t,s)$ the  cmf  of
		$A_{u,\tau}(t)^{-1} \X_{u,\tau}(t)$. Under the condition that $\lim_{u \to \infty} A_{u,\tau}(t) = A$ uniformly in $t \in E$ and $\tau \in Q_u$,
		the convergence in \eqref{e:Kts} only depends on the correlation
		structure of $\X_{u,\tau}(t)$, that is, if we {suppose that}
		\begin{equation}
		\label{e:limCts}
		\lim_{u \to \infty} {\sup_{\tau \in Q_u, s,t\in E}} \Abs{  u^2 \left[ C_{u,\tau}(t,s) - C_{u,\tau}(t,0)
		C_{u,\tau}(0,s) \right] - \widetilde{K}(t,s)}=0,
		\end{equation}
		then \eqref{e:Kts}
		holds with $K(t,s) = A \widetilde{K}(t,s) A^{\top}$.
	\end{enumerate}
\end{remark}

{For $\Y(t)$, $t \in \bD$  a centered $\R^d$-valued Gaussian
process with a.s.\ continuous sample paths with  cmf $K(s,t)$, $s$, $t\in \bD$ and  an $\R^d$-valued function $\vk d$ define below
\BQN
 H_{\Y,\d}(E)=  \int_{\R^d} e^{\id^{\top} \x} \pk{\exists t \in E:\ \Y(t) - \vk d (t)  > \x} d \x.\label{def.H}
\EQN
}
\BEL\label{Lem1}
{Suppose that $\xxiu(t)$, $t\in \bD$, $u > 0$, $\tau \in Q_u$ satisfy  (A\ref{I:A1})-(A\ref{I:A3}).
Let $\w = \Sigma^{-1} \tilb$ where $\tilb$ is the unique solution of
$\Pi_{\Sigma}(\b)$.}
If   $\Y(t)$, $t \in \bD$ has cmf 
$R(t,s)=\diag{\w} K(t,s) \diag{\w}$ 
and $\d_{\w}(t) = \diag{\w} \d(t)$,  {then we have}
\begin{equation}
\label{eq:lem1}
\limit{u} \sup_{\tau \in Q_u}\ABs{  \frac{ \pk{\exists {t \in \bD}:\ \xxiu(t)> u \vk{b} }}
	{\pk{\X_{u,\tau}(0) > u \b}}
	- \H_{\Y,\d_{\w}}(E)
}=0.
\end{equation}
\EEL

\BRM \label{remA}
 If we suppose stronger assumptions on $\Sigma_{u,\tau}$, for instance
$$\lim_{u \to \infty} \sup_{t\in Q_u} \normF{ u^2 \left[ \Sigma - \Sigma_{u,\tau} \right]  - \Xi} =0,$$ then as $u\to \IF$
\[
\pk{\X_{u,\tau}(0) > u \b}
\sim
e^{-\w^{\top} \Xi \w / 2}
\pk{{\vk{\mathcal{N}}} > u \b},
\]
where ${\vk{\mathcal{N}}}$ is a centered Gaussian vector with  covariance matrix $\Sigma$.
\ERM

\section{Proofs of Main Results}\label{Secproofs}

For notation simplicity, unless otherwise defined, $\vk w$ in the following is defined as $\w = \Sigma^{-1} \tilb$.
Recall that $I$ with $m$ elements is the index set related to $\Pi_{\Sigma}(\b)$ with unique solution  $\tilb$ where $\vk b\inr^d$ is assumed to have  at least one positive component.
Further by \eqref{keyWW} and Assumption (B\ref{I:B2}) we have that   $\vk w_J= \vk 0_J$ and
\BQN \label{tauw2}
\xi_{\w}= \vk w^\top V \vk w= \vk w_I^\top V_{II}  \vk w_I> 0.
\EQN
{Otherwise specified, in the following  $\Y(t)$, $t\inr$ is a {centered $\R^d$-valued} Gaussian process with cmf $\diag{\w} R_{\alpha,V}(t,s) \diag{\w}$ where
	$R_{\alpha,V}$ is defined in \eqref{e:KV}.
}
\subsection{Proof of \netheo{T:stat}}
Let  $\vk X(t)$, $t\in [0,T]$ be  a stationary centered $\R^d$-valued Gaussian process.
{Before proceeding to the proof of Theorem \ref{T:stat}, we shall derive
some useful asymptotic bounds.} The first
lemma is crucial for the  negligibility of the double-sum. Below we set $\Delta(\lambda, \Lambda) = [0, \Lambda] \times [\lambda, \lambda + \Lambda]$
and
\begin{equation}
  \label{e:omegab}
  P_{\vk b}(\lambda, \Lambda, u) =   	\pk{\exists (t,s) \in u^{-2/\alpha} \Delta(\lambda, \Lambda)  :\ \X(t) > u \b,\ \X(s) > u \b}.
\end{equation}

\begin{lem}
	\label{L:double} If Assumption (B\ref{I:B2}) holds, then there exist
	positive constants $C$, {$\varepsilon$} and $n_0$ such that for every $\lambda \geq
	n_0 \Lambda >
	0$ with 	{$\lambda + \Lambda < \varepsilon u^{2/\alpha}$}
	\begin{equation}
	\label{e:double}
	P_{\vk b}(\lambda, \Lambda, u) \leq
	C\, \pk{\vk X(0)> u \vk b}  e^{  -
		\frac{\lambda^{\alpha}}{{16}} \xi_{\w} }.
	\end{equation}
\end{lem}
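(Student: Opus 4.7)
My strategy is to reduce the two-point exceedance event to a single vector-tail event for the sum process, analyze its generalized variance via the local expansion (B\ref{I:B2}), and then apply a Piterbarg-type inequality from \nelem{GBorell}.

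The first step is the componentwise sum inequality: since $\{\x\ge\b\}\cap\{\y\ge\b\}\subseteq\{\x+\y\ge 2\b\}$, setting $\vk U(t,s):=\X(t)+\X(s)$ gives
\[
P_{\vk b}(\lambda,\Lambda,u)\;\le\;\pk{\exists(t,s)\in u^{-2/\alpha}\Delta(\lambda,\Lambda):\,\vk U(t,s)/2>u\b}.
\]
The process $\vk U/2$ is a centered $\R^d$-valued Gaussian process on the rectangle $E:=u^{-2/\alpha}\Delta(\lambda,\Lambda)$, and using $\mathcal{R}(\tau)+\mathcal{R}(\tau)^\top = 2\Sigma - 2\tau^\alpha V^+ + o(\tau^\alpha)$ from (B\ref{I:B2}) we have
\[
\Cov(\vk U(t,s)/2)=\Sigma-\tfrac{(s-t)^\alpha}{2}V^+ + o(|s-t|^\alpha),\quad s-t\downarrow 0.
\]

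Second, I use the QPP framework of \nelem{AL} together with the Lipschitz continuity of the solution map (\nelem{L:lip}) to get the first-order expansion of the generalized variance:
\[
\sigma_{\vk b,\vk U/2}^{-2}(\tau)\;=\;y_b+\tfrac{\tau^\alpha}{2}\xi_{\w}+o(\tau^\alpha),
\]
where $y_b=\vk w^\top\b=\tilb^\top\Sigma^{-1}\tilb$ and I use $\vk w^\top V^+\vk w=\vk w^\top V\vk w=\xi_{\w}$ (the antisymmetric part $V^-$ contributes nothing since $\vk w^\top V^-\vk w=0$). Picking $n_0$ large enough that $(1-1/n_0)^\alpha\ge 1/4$ and $\varepsilon$ small enough for the above Taylor expansion to hold uniformly on the rectangle, the constraint $\lambda\ge n_0\Lambda$ and $\lambda+\Lambda<\varepsilon u^{2/\alpha}$ forces $\tau=s-t\ge u^{-2/\alpha}(\lambda-\Lambda)$ for all $(t,s)\in E$, whence
\[
\frac{u^2}{2\sup_{(t,s)\in E}\sigma_{\vk b,\vk U/2}^2(t,s)}\;\ge\;\frac{u^2 y_b}{2}+\frac{\lambda^\alpha\xi_{\w}}{16}.
\]

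Third, I apply the vector-valued Piterbarg inequality \eqref{IPIT} of \nelem{GBorell} to $\vk U/2$ on $E$, the Hölder bound \eqref{eq:ZG2} with $\gamma=\alpha$ following from (B\ref{I:B2}) while \eqref{e:holder} follows from smoothness of the perturbed covariance together with \nelem{L:lip}. This produces an upper bound of the form $C\Lambda^2 u^{p}\exp(-u^2 y_b/2-\lambda^\alpha\xi_{\w}/16)$ for some polynomial exponent $p=p(d,\alpha)$.

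The \textbf{main obstacle} is to match the polynomial prefactor to the sharp Gaussian tail $\pk{\X(0)>u\b}\asymp u^{-m}e^{-u^2 y_b/2}$. The plain Piterbarg bound is suboptimal in $u$ because its polynomial factor comes from the Hölder continuity and does not see the effective dimension $m=|I|$. To resolve this I decompose $E$ into $O(\Lambda^2)$ sub-cells of Pickands size $u^{-2/\alpha}$ and apply the uniform Pickands-Piterbarg lemma (\nelem{Lem1}) on each, which delivers exactly the polynomial factor $u^{-m}$ inherited from $\pk{\X(0)>u\b}$; the combinatorial cost $\Lambda^2$ is absorbed into $C$, and the monotone dependence of the exponent on $\tau^\alpha$ (hence on the cell location) keeps the $e^{-\lambda^\alpha\xi_{\w}/16}$ uniform across cells. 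This yields the required estimate \eqref{e:double}.
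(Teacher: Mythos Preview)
Your first four steps are correct and parallel the paper's approach: the reduction to the half-sum process $\vk U/2$, the expansion of its covariance $\Sigma-\tfrac{\tau^\alpha}{2}V^+ + o(\tau^\alpha)$, and the generalized-variance estimate yielding the exponent $u^2 y_b/2+\lambda^\alpha\xi_\w/16$ all match the paper's computations (cf.\ \eqref{e:SigmauB}--\eqref{e:Sigmau2}).

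The gap is in your last step. Invoking \nelem{Lem1} on each Pickands cell of the sum process requires hypothesis (A\ref{I:A1}), namely $u\,\normF{\Sigma-\Sigma_{u,\tau}}\to 0$ uniformly over the cells in question. But the base covariance of $\vk U/2$ at any cell in $\Delta(\lambda,\Lambda)$ satisfies $\Sigma-\Sigma_{u,\tau}\asymp \tfrac{\lambda^\alpha}{2u^2}V^+$, so $u\,\normF{\Sigma-\Sigma_{u,\tau}}\asymp \lambda^\alpha/u$, which blows up once $\lambda\gg u^{1/\alpha}$, far short of the range $\lambda<\varepsilon u^{2/\alpha}$ that the lemma must cover. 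Thus \nelem{Lem1} simply does not apply, and your polynomial-prefactor repair fails. (The same obstruction prevents using the asymptotic of Remark~\ref{remA}: the quantity $u^2(\Sigma-\Sigma_{u,\tau})$ does not converge.)

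The paper's proof avoids this by \emph{not} invoking \nelem{Lem1}; instead it carries out a direct conditional computation. One conditions on $\X_u(0,\lambda)=u\b-u^{-1}\x$ and factors out the density $\varphi_{\Sigma_{u,\lambda}}(u\b-u^{-1}\x)$, which already delivers both the correct $u^{-d}$ prefactor and the $e^{-\lambda^\alpha\xi_\w/8}$ decay. The remaining integral $\int e^{(\w+O(\lambda^\alpha u^{-2}))^\top\x}J_u(\x)\,d\x$ is then bounded by splitting $\R^d$ according to the signs of the $x_i$ and applying the one-dimensional Piterbarg inequality to the scalar projection $\w_F^\top\vk\chi_{u,F}$ of the conditioned process. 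The crucial quantitative point is that the conditional variance and drift are bounded by $c\,\delta\lambda^\alpha$, where $\delta$ is the error in the expansion \eqref{e:variance}; the resulting correction $e^{c'\delta\lambda^\alpha}$ is then absorbed into the main decay $e^{-\lambda^\alpha\xi_\w/8}$ by choosing $\delta$ (hence $\varepsilon$) small enough. This explicit tracking of the $\delta$-dependent error is exactly what allows the argument to remain valid uniformly in $\lambda$ up to $\varepsilon u^{2/\alpha}$, and it has no counterpart in your cell-decomposition scheme.
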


\prooflem{L:double}
Since
\begin{align*}
P_{\vk b}(\lambda, \Lambda, u) \leq
\pk{\exists (t,s) \in u^{-2/\alpha} \Delta(\lambda,\Lambda):\ \X_I(t) > u \b,\, \X_I(s) > u \b}
\end{align*}
and $\pk{\X(0) > u \b} \sim c_1 \pk{\X_I(0) > u \b_I}$ as $u \to \infty$, it suffices to prove \eqref{e:double}
with $\X$ replaced by $\X_I$. In the rest of the proof, we assume for notational  simplicity  that
$I = \{1,\ldots, d\}$ {and consequently $\tilb = \b$}. Set below
$$V^+ = \frac{1}{2} (V + V^{\top}), \quad V(t) ={\SA{t}{V}} =|t|^{\alpha} \left( V \id_{\{t \geq 0\}} + V^{\top} \id_{\{t<0\}} \right).$$
By Assumption (B\ref{I:B2}), for every $\delta > 0$ there exists $\varepsilon >
0$ such that for every $|t| < \varepsilon$ we have
\begin{equation}
\label{e:variance}
\left\Vert \Sigma - \RR(t) - V(t) \right\Vert_{\rmF} \leq \delta |t|^{\alpha}.
\end{equation}
Set $R_u(t) = \RR(u^{-2/\alpha} t)$ and
define $\vk{X}_u(t,s) = \frac{1}{2} \left( \vk{X}(u^{-2/\alpha} t) +
\vk{X}(u^{-2/\alpha} s) \right)$ which has cmf
\begin{align*}
R_u(t,s;t_1,s_1) =
& \E{\vk{X}_u(t,s) \vk{X}_u(t_1,s_1)^{\top}} \\
= & \frac{1}{4} \left( R_u(t-t_1) + R_u(s-s_1) + R_u(t-s_1) + R_u(s-t_1) \right).
\end{align*}
Further set
$$\Sigma_{u,\lambda} = \E{\vk{X}_u(0,\lambda)
	\vk{X}_u(0,\lambda)^{\top}} = \frac{1}{4} \left[ 2 \Sigma + R_u(\lambda) + R_u(-\lambda) \right]$$
 and
\[ V(t,s;t_1,s_1) = V(t-t_1) + V(s-s_1) + V(t-s_1) + V(s-t_1) - V(\lambda) - V(-\lambda). \]
In view of  \eqref{e:variance}  we have for $\lambda \in (0, \varepsilon u^{2/\alpha})$ that
\begin{equation}
\label{e:SigmauB}
\left\Vert u^2 (\Sigma - \Sigma_{u,\lambda}) - \frac{\lambda^{\alpha}}{2} V^+\right\Vert_{\rmF} \leq \frac{1}{2} \delta \lambda^{\alpha}
\end{equation}
and
\begin{equation}
\label{e:SigmaRu}
\left\Vert u^2 \big( \Sigma_{u,\lambda} - R_u(t,s;t_1,s_1) \big) - \frac{1}{4} V(t,s;t_1,s_1) \right\Vert_{\mathrm{F}} \leq \frac{3}{2} \delta \lambda^{\alpha}.
\end{equation}
Since $\Sigma_{u,\lambda}^{-1} - \Sigma^{-1} = \Sigma_{u,\lambda}^{-1} \left( \Sigma - \Sigma_{u,\lambda} \right) \Sigma^{-1}$,
we have from \eqref{e:SigmauB} that
\BQN
\label{e:Sigmau1}
\normF{\Sigma_{u,\lambda}^{-1} - \Sigma^{-1}} & =& O(\lambda^{\alpha} u^{-2})
\EQN
and
\BQN
\label{e:SigmauSigma}
\begin{aligned}
& \normF{ u^2 \left( \Sigma_{u,\lambda}^{-1} - \Sigma^{-1} \right) - \frac{\lambda^\alpha}{2} \Sigma^{-1} V^+ \Sigma^{-1}} \\
\leq &
\normF{\Sigma^{-1} \left[ u^2 \left( \Sigma - \Sigma_{u,\lambda} \right) - \frac{\lambda^{\alpha}}{2} V^+ \right] \Sigma^{-1}}
+
\normF{ u^2 \left( \Sigma_{u,\lambda}^{-1} - \Sigma^{-1} \right) \left( \Sigma - \Sigma_{u,\lambda} \right) \Sigma^{-1}} \\
\leq &
\delta \lambda^{\alpha} \normF{\Sigma^{-1}}^{{2}}.
\end{aligned}
\EQN
Therefore, for $\delta>0$ sufficiently small
\begin{equation}
\label{e:Sigmau2}
u^2 \b^{\top} \Sigma_{u,\lambda}^{-1} \b \geq u^2 \b^{\top} \Sigma^{-1} \b + \frac{\lambda^{\alpha}}{4} \xi_{\w}.
\end{equation}

Conditioning on $\vk{X}_{u}(0,\lambda) = u \vk{b} -
u^{-1} \vk{x}=:a_u(\vk x)$ we obtain further
\begin{align*}
P_{\vk b}(\lambda, \Lambda, u)
\leq & \pk{\exists (t,s) \in \Delta(\lambda,\Lambda) :\
	\vk{X}_u(t,s) > u \vk{b}} \\
= & u^{-d} \int_{\R^d} \pk{\exists (t,s) \in \Delta(\lambda,
	\Lambda) :\ \vk{X}_u(t,s) > u \vk{b} \,\big|\,
	\vk{X}_{u}(0,\lambda) = a_u(\vk x)}
\varphi_{\Sigma_{u,\lambda}}(a_u(\vk x)) d
\vk{x} \\
= & u^{-d} \int_{\R^d} {J}_u(\x)
\varphi_{\Sigma_{u,\lambda}}(a_u(\vk x)) d \vk{x},
\end{align*}
where
$$ {J}_u(\x)=\pk{\exists (t,s) \in \Delta(\lambda,
	\Lambda) :\ \vk{\chi}_u(t,s) > \vk{x}}$$
and $\vk{\chi}_u(t,s)$ is the conditioned process
$u(\vk{X}_{u}(t,s) - u \vk{b}) + \vk{x}$ given
$\vk{X}_{u}(0,\lambda) = a_u(\vk x)$. By \eqref{e:Sigmau1} and \eqref{e:Sigmau2}
\begin{equation}
\label{e:phiSigmau}
\begin{aligned}
\varphi_{\Sigma_{u,\lambda}}(a_u(\vk x))
\leq & \varphi_{\Sigma_{u,\lambda}}(u \vk{b}) \exp \left(
\vk{b}^{\top} \left( \Sigma_{u,\lambda} \right)^{-1}
\vk{x} \right) \\
{\leq} & \varphi_{\Sigma}(u \vk{b}) \exp \left( -
\frac{\lambda^{\alpha} \xi_{\w}}{{8}}
\right)
e^{\left(\vk{w} + {O(\lambda^\alpha u^{-2})}\right)^{\top} \vk{x} }.
\end{aligned}
\end{equation}
Consequently,
for all $u$ large enough
\begin{equation}
\label{e:dsconditioned}
\begin{aligned}
P_{\vk b}(\lambda, \Lambda, u)
  \leq  2 u^{-d} \varphi_{\Sigma}(u \vk{b}) \exp \left( -
\frac{\lambda^{\alpha}}{{8}} \xi_{\w}
\right) \int_{\R^d} e^{\left( \vk{w} + {O(\lambda^\alpha u^{-2})}
	\right)^{\top} \vk{x}} P_u(\x)
d \vk{x}.
\end{aligned}
\end{equation}

Given   $F \subset \left\{ 1,\ldots, d \right\}$ let
$\Omega_F = \left\{ \x \in \R^d:\ x_i > 0,\, x_j < 0,\, i \in F,\, j \not\in F \right\}$. If $F$ is empty i.e., $F = \emptyset$, then
\begin{equation}
\label{e:Fempty}
\int_{\Omega_{\emptyset}} e^{\left( \w + O(\lambda^{\alpha} u^{-2}) \right)^{\top} \x} {J}_u(\x)d \x
\leq
\int_{\Omega_{\emptyset}} e^{\left( \w + O(\lambda^{\alpha} u^{-2}) \right)^{\top} \x} d \x
\leq
\frac{2}{\prod_{1 \leq i \leq d}w_i}.
\end{equation}
Assume next that $F \neq \emptyset$ and define for $u>0$
$$d_u(t,s) = - \E{\vk{\chi}_u(t,s)}, \quad \eta_u(t,s) = \w_F^{\top} \left( \vk{\chi}_{u,F}(t,s) + \d_{u,F}(t,s) \right).$$
For  all $\x \in \Omega_F$
\begin{equation}
\label{e:dsupF}
P_u(\x) \leq \pk{\exists (t,s) \in \Delta(\lambda,\Lambda):\ \eta_u(t,s) > \w_F^{\top} \x_F - \w_F^{\top} \d_{u,F}(t,s)}.
\end{equation}

Since  $\X_u(0,\lambda)$ is independent of $\X_u(t,s) - R_u(t,s;0,\lambda) \Sigma_{u,\lambda}^{-1} \X_u(0,\lambda)$ we obtain
\BQNY
 d_u(t,s) &=& u^2 \left[\Sigma_{u,\lambda} - R_u(t,s;0,\lambda) \right] \Sigma_{u,\lambda}^{-1} \b
- \left[ \Sigma_{u,\lambda} - R_u(t,s;0,\lambda) \right] \Sigma_{u,\lambda}^{-1} \x
\EQNY
and the cmf of $\vk{\chi}_u$ is
\[ K_u(t,s;t_1,s_1) = u^2 \left[ R_u(t,s;t_1,s_1) - R_u(t,s;0,\lambda) \Sigma_{u,\lambda}^{-1} R_u(0,\lambda;t_1,s_1) \right].
\]

Set
\[ \d(t,s) = \frac{1}{4} \left[ t^{\alpha} + (s-\lambda)^{\alpha} + s^{\alpha} - \lambda^{\alpha} \right] V \w - \frac{1}{4} \left( \lambda^{\alpha} - (\lambda-t)^{\alpha} \right) V^{\top} \w
\]
and
\begin{align*}
 K(t,s;t_1,s_1) =& \frac{1}{4} \left( V(t,s;0,\lambda) + V(0,\lambda; t_1, s_1) - V(t,s;t_1,s_1) \right) \\
  =& \frac{1}{4} \left[ \psi(t,s;t_1,s_1) V + \psi(t_1,s_1;t,s) V^\top - V(t-t_1) - V(s-s_1) \right],
\end{align*}
where
$$\psi(t,s;t_1,s_1) = t^{\alpha} + (s-\lambda)^{\alpha} + s^{\alpha} - \lambda^{\alpha} +(\lambda-t_1)^{\alpha} - (s-t_1)^{\alpha}.$$
By \eqref{e:SigmaRu} and \eqref{e:SigmauSigma}, there exists $c_2 > 0$ such that for $\lambda + \Lambda < \varepsilon u^{2/\alpha}$ and $(t,s)$, $(t_1,s_1)\in \Delta(\lambda,\Lambda)$
\begin{equation}
\label{e:echiu}
\left| \vk{d}_u(t,s) - \d(t,s)  - \left[ \Sigma_{u,\lambda} - R_{u}(t,s;0,\lambda) \right]
\Sigma_{u,\lambda}^{-1} \vk{x} \right| \leq c_2 \delta \lambda^{\alpha}
\end{equation}
holds and further
\begin{equation}
\label{e:vchiu}
\normF{K_u(t,s;t_1,s_1) - K(t,s;t_1,s_1)} \leq c_2 \delta \lambda^{\alpha}.
\end{equation}
Using the inequality
\bqn{\label{aero}
	\left| p^{h} - q^{h} \right| \le h \max( p^{h-1} , q^{h-1} ) \left| p-q \right|
}
valid for all $p, q$ and { $h$ positive}, we obtain that
\[
\left| \d(t,s) \right| \leq c_3 \lambda^{\alpha-1} \Lambda \quad \text{and }\normF{K(t,s;t_1,s_1)} \leq c_3 \lambda^{\alpha-1} \Lambda
\]
holds for some positive constant $c_3$. Therefore, we can choose $n_0$ large enough so that, for $\lambda \geq n_0 \Lambda$ and $\lambda + \Lambda < \varepsilon u^{2/\alpha}$
\begin{equation}
\label{e:duts}
\inf_{(t,s) \in \Delta(\lambda,\Lambda)} \left[ - \w_F^{\top} \d_{u,F}(t,s) \right] \geq - \frac{1}{2} \w_F^{\top} \x_F - c_4 \delta \lambda^{\alpha}
\end{equation}
and 	
{\begin{equation}
    \label{e:etauts}
    \sigma_F^2 = \sup_{(t,s) \in \Delta(\lambda, \Lambda)} \Var \left( \eta_u(t,s) \right) \leq c_4 \delta \lambda^\alpha.
  \end{equation}
}

Since the conditional variance is always less than or equal to the unconditional
one, we have that
\begin{align*}
& \Var \left( \eta_u(t,s) - \eta_u(t_1,s_1) \right) \\
\leq &
u^2 \Var \left( \vk{w}_F^{\top} \vk{X}_{u,F}(t,s) - \vk{w}_F^{\top} \vk{X}_{u,F}(t_1 , s_1) \right) \\
\leq&
\frac{u^2}{2} \left[ \Var \left( \vk{w}_F^{\top} \vk{X}_F(u^{-2/\alpha} t) - \vk{w}_F^{\top} \vk{X}_F(u^{-2/\alpha} t_1) \right)
+
\Var \left( \vk{w}_F^{\top} \vk{X}_F(u^{-2/\alpha} s) - \vk{w}_F^{\top} \vk{X}_F(u^{-2/\alpha} s_1) \right)
\right] \\
=&
u^2 \vk{w}_F^{\top} \left( \Sigma_{FF} - \RR_{FF}(u^{-2/\alpha} (t-t_1)) \right) \vk{w}_F
+
u^2 \vk{w}_F^{\top} \left( \Sigma_{FF} - \RR_{FF}(u^{-2/\alpha} (s-s_1)) \right) \vk{w}_F \\
\leq &
c_5 \left( \left| t-t_1 \right|^{\alpha} + \left| s-s_1 \right|^{\alpha} \right).
\end{align*}
Now  Piterbarg inequality (c.f. \cite[Theorem~8.1]{Pit96}) and
\eqref{e:dsupF}, \eqref{e:duts} imply that
\begin{align*}
  {J}_u(\x)
  \leq &
  c_6 \left( \frac{\vk{w}_F^{\top} \vk{x}_F  - 2 c_4 \delta
	\lambda^{\alpha}}{\sigma_F} \right)^{4/\alpha} \exp \left( -
  \frac{\left(\vk{w}_F^{\top} \vk{x}_F - 2 c_4 \delta \lambda^{\alpha}\right)^2}{8
	\sigma_F^2} \right) \\
  \leq& {c_7 \exp \left( -
  \frac{\left(\vk{w}_F^{\top} \vk{x}_F - 2 c_4 \delta \lambda^{\alpha}\right)^2}{
  16 c_4 \delta \lambda^\alpha} \right)
  }
\end{align*}

for $\vk{w}_F^{\top} \vk{x}_F > 2 c_4 \delta \lambda^{\alpha}$.  It follows that 
\begin{equation}
\label{e:OmegaFint}
\begin{aligned}
& \int_{\Omega_F} e^{\left( \vk{w} + O({\lambda^{\alpha}}u^{-2})
	\right)^{\top} \vk{x}}
{J}_u(\x) d\vk{x} \\
\leq& c_8 \left( \delta \lambda^\alpha \right)^{|F|} e^{2 c_4 \delta \lambda^\alpha}
+ c_8 \int_{2 c_4 \delta \lambda^{\alpha}}^{\infty} y^{|F|-1}  \exp \left( 2y - \frac{\left( y - 2 c_4 \delta \lambda^{\alpha} \right)^2}{16 c_4 \delta \lambda^\alpha} \right) d y \\
\leq & c_9 \exp \left( c_{10} \delta \lambda^\alpha \right),
\end{aligned}
\end{equation}
{where $|F|$} is the cardinality of the set {$F$.} Together with \eqref{e:dsconditioned} and \eqref{e:Fempty} we obtain further
\begin{align*}
P_{\vk b}(\lambda, \Lambda, u)  \leq & c_{11} u^{-d} \varphi_{\Sigma}(u \vk{b}) \exp \left( -
\frac{\lambda^{\alpha}}{{8}} \xi_{\w}
+ c_{10}  \delta \lambda^{\alpha}  \right).
\end{align*}
Choosing $\delta>0$ small enough 
{so that $c_{10} \delta \leq \xi_{\w} / 16$},
we have
\[
  P_{\vk b}(\lambda, \Lambda, u) \leq c_{12} u^{-m} \varphi_{\Sigma}(u \vk{b}) \exp \left( - \frac{\lambda^{\alpha}}{{16}}\xi_{\w} \right) \sim c_{13} \pk{\X(0) > u \b} \exp \left( - \frac{\lambda^\alpha}{16} \xi_{\w} \right)\]
{as $u \to \infty$, which establishes} the proof. \QED

\begin{korr}
	\label{C:statn0}
	Under Assumption (B\ref{I:B2}), there are positive constants $C$ and $\varepsilon$ such that for every $\lambda > \Lambda > 0$ {with $\lambda + \Lambda < \varepsilon u^{2/\alpha}$} we have
	\begin{equation}
	\label{e:n0}
	\frac{P_{\vk b}(\lambda, \Lambda, u)  }
	{ \pk{\X(0) > u \b}}
	\leq
	C\Lambda^2 (\lambda - \Lambda)^{-2} e^{- \frac{(\lambda - \Lambda)^\alpha}{16} \xi_{\w}}.
	\end{equation}
\end{korr}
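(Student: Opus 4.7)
The plan is to reduce to Lemma~\ref{L:double} by a partitioning argument, since under the weaker assumption $\lambda > \Lambda$ of the corollary the distance hypothesis $\lambda \geq n_0 \Lambda$ of that lemma may fail. I would split each of $[0,\Lambda]$ and $[\lambda, \lambda+\Lambda]$ into sub-intervals short enough that every pair of sub-rectangles is separated by at least $n_0$ times their common length, apply Lemma~\ref{L:double} to each such pair, and sum via Bonferroni.

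Explicitly, let $n_0$ and $\varepsilon$ be the constants furnished by Lemma~\ref{L:double}, set $\Lambda' := (\lambda-\Lambda)/n_0$ and $N := \lceil \Lambda/\Lambda' \rceil$, and partition $[0, \Lambda]$ into $N$ consecutive sub-intervals $I_1, \ldots, I_N$ of common length $\Lambda/N \leq \Lambda'$; partition $[\lambda, \lambda+\Lambda]$ analogously into $J_1, \ldots, J_N$. The union bound then gives
\[
  P_{\vk b}(\lambda, \Lambda, u) \leq \sum_{k,l = 1}^N \pk{\exists (t,s) \in u^{-2/\alpha}(I_k \times J_l):\ \X(t) > u \b,\ \X(s) > u \b}.
\]
By stationarity of $\X$, each summand equals $P_{\vk b}(\lambda^{(k,l)}, \Lambda/N, u)$ with shift $\lambda^{(k,l)} := \lambda + (l - k)\Lambda/N$ satisfying $\lambda^{(k,l)} \geq \lambda - (N-1)\Lambda/N > \lambda - \Lambda = n_0 \Lambda' \geq n_0 (\Lambda/N)$. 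Thus Lemma~\ref{L:double} applies to every pair, and $\lambda^{(k,l)} + \Lambda/N \leq \lambda + \Lambda < \varepsilon u^{2/\alpha}$ keeps us in its admissible range.

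Invoking Lemma~\ref{L:double} on each pair, bounding every exponential by the uniform worst case $\exp(-(\lambda-\Lambda)^\alpha \xi_{\w}/16)$ via $\lambda^{(k,l)} \geq \lambda - \Lambda$, and summing the $N^2$ terms, I would obtain
\[
  P_{\vk b}(\lambda, \Lambda, u) \leq N^2\, C\, \pk{\X(0) > u \b}\, e^{-(\lambda - \Lambda)^\alpha \xi_{\w}/16}.
\]
The elementary bound $N \leq 1 + n_0 \Lambda/(\lambda-\Lambda)$ then yields $N^2 \leq C' \Lambda^2/(\lambda-\Lambda)^2$ after absorption of constants, producing~\eqref{e:n0}.

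The delicate point is calibrating $\Lambda'$ to reconcile two competing constraints: it must be small enough that every pair $(k,l)$ meets the distance hypothesis $\lambda^{(k,l)} \geq n_0(\Lambda/N)$ of Lemma~\ref{L:double}, yet large enough that the resulting count $N^2$ stays comparable to $\Lambda^2/(\lambda - \Lambda)^2$. The choice $\Lambda' = (\lambda - \Lambda)/n_0$ is what simultaneously balances both.
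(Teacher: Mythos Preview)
Your argument is essentially the paper's own: both partition $[0,\Lambda]$ and $[\lambda,\lambda+\Lambda]$ into pieces of length roughly $(\lambda-\Lambda)/n_0$, apply Lemma~\ref{L:double} to every pair of sub-rectangles (the shift $\lambda^{(k,l)}\ge\lambda-\Lambda\ge n_0\cdot(\text{piece length})$ is exactly the paper's check), bound each exponential by $e^{-(\lambda-\Lambda)^\alpha\xi_{\w}/16}$, and count pieces.

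The one difference is that the paper first disposes of the case $\lambda\ge n_0\Lambda$ directly via Lemma~\ref{L:double} and only partitions when $\lambda<n_0\Lambda$. You skip this case split, and that creates a small slip in your last line: from $N\le 1+n_0\Lambda/(\lambda-\Lambda)$ you cannot conclude $N^2\le C'\Lambda^2/(\lambda-\Lambda)^2$ uniformly, because when $\lambda\ge n_0\Lambda$ the additive ``$1$'' dominates (indeed $N=1$ or $2$ there, while $\Lambda^2/(\lambda-\Lambda)^2$ can be arbitrarily small). The paper's restriction to $\lambda<n_0\Lambda$ guarantees $n_0\Lambda/(\lambda-\Lambda)>1$, so that $k_0\le 2n_0\Lambda/(\lambda-\Lambda)$ and the squared count is genuinely $\le 4n_0^2\Lambda^2/(\lambda-\Lambda)^2$. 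The fix is exactly the paper's: treat $\lambda\ge n_0\Lambda$ separately by invoking Lemma~\ref{L:double} once with no subdivision.
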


\begin{proof}[Proof of Corollary~\ref{C:statn0}]
{Hereafter  $\floor{x}$ stands for  the integer part of $x\inr$.} Let $n_0$ be the constant specified in Lemma~\ref{L:double}.	By Lemma~\ref{L:double}, it suffices to consider the case $\lambda < n_0 \Lambda$. Let $k_0 = \floor{\frac{n_0 \Lambda}{\lambda - \Lambda}} + 1$ and $\Lambda_0 = \Lambda / k_0$. For $0 \leq k$, $l \leq k_0 - 1$ we define
	\[
	A_{kl} = \left\{ \exists (t,s) \in u^{-2/\alpha} \left( [k \Lambda_0,\, (k+1) \Lambda_0] \times [\lambda + l \Lambda_0,\, \lambda + (l+1) \Lambda_0] \right):\ \X(t) > u \b,\, \X(s) > u \b \right\}
	\]
	and thus in this notation
	\[
	P_{\vk b}(\lambda, \Lambda, u)  \leq
	\sum_{k,l = 0}^{k_0 - 1} \pk{A_{kl}}.
	\]
	Since $\lambda + (l-k-1) \Lambda_0 \geq \lambda - \Lambda \geq n_0 \Lambda_0$  Lemma~\ref{L:double} implies
	\begin{align*}
	\pk{A_{kl}}
	= P_{\vk b}(\lambda + (l - k)\Lambda_0, \Lambda_0, u)
	\leq
	C \pk{\X(0) > u \b} e^{-\frac{(\lambda - \Lambda)^{\alpha}}{16} \xi_{\w}}.
	\end{align*}
	The claim  follows from the fact that $k_0^2 \leq 4 n_0^2 \Lambda^2 (\lambda-\Lambda)^{-2}$.
\end{proof}

\begin{proof}[Proof of Theorem~\ref{T:stat}]
	Let in the following for $\Lambda>0$, $u>0$
	\[ \Delta_k = [k \Lambda u^{-2/\alpha} , (k + 1) \Lambda u^{-2/\alpha} ], \quad 0 \leq k
	\leq N_T = \floor{\frac{T}{\Lambda u^{-2/\alpha}}}. \]
 Since $\X$  is stationary, we have
\begin{eqnarray}
\lefteqn{   (N_T + 1) \pk{\exists t \in \Delta_0:\ \X(t) > u \b}}\nonumber\\
  &\ge&
  \pk{\exists t \in [0, T]: \ \X(t) > u \b} \nonumber\\
  &\ge&
  N_T \pk{\exists t \in \Delta_0:\ \X(t) > u \b}
  -
  2 \sum_{k = 1}^{N_T} (N_T - k) \omega_{\b}(k \Lambda, \Lambda, u), \label{e:Psum}
\end{eqnarray}
where $P_{\vk b}(k \Lambda, \Lambda, u)$ is defined by \eqref{e:omegab}.
By Lemma~\ref{Lem1} and the stationarity of $\X$, as $u\to \IF$
	\[
	\pk{\exists t \in \Delta_k:\ \X(t) > u \b}
	\sim
	\H_{\Y, \d_{\w}}([0, \Lambda]) \pk{\X(0) > u \b},
	\]
 with $\vk Y$ as defined {at} the beginning of this section and
	\bqn{
		\d_{\w}(t) 
    = { \SA{t}{V_{\w}} \vk 1},  \quad V_{\w}= \diag{\w} V \diag{\w}.
	}
{Note that in our notation $\H_{\Y, \d_{\w}}([0, \Lambda])= \PICS ([0,\Lambda])$, consequently}
  \begin{equation}
\label{e:statub}
\limsup_{u \to \infty} \frac{\pk{\exists t \in [0, T]: \ \X(t) > u
		\b}} { T u^{2/\alpha}  \pk{ \vk X(0)> u \vk b} }
\leq   \frac{1}{\Lambda} 	\PICS  ([0,\Lambda]).
\end{equation}

The stationarity of $\X$ implies that the function $\Lambda \mapsto \PICS([0, \Lambda])$ is sub-additive. Therefore, the limit $$ \HA_{\alpha, V_{\w}} = \lim_{\Lambda \to \infty} \Lambda^{-1}\PICS([0, \Lambda]) $$
exists and is finite. {The  sum} in \eqref{e:Psum} is bounded by
\begin{align*}
  A_1 + A_2 + A_3 =
  N_T {P_{\b}}(\Lambda, \Lambda, u) +
  N_T \sum_{k=2}^{N_{\varepsilon}} {P_{\b}}(k \Lambda, \Lambda, u) +
  N_T \sum_{k = N_{\varepsilon} + 1}^{N_T} {P_{\b}}(k \Lambda, \Lambda, u).
\end{align*}
%
In view of Lemma~\ref{Lem1}, Corollary~\ref{C:statn0} and the Piterbarg inequality stated in Lemma~\ref{GBorell}
the negligibility of the double-sum follows with the same arguments as in the $1$-dimensional case.
Here we spell out the details for readers' convenience.

We first estimate $A_3$. For $k \geq N_{\varepsilon} + 1$, the
	distance between $\Delta_0$ and $\Delta_k$ is at least
	$\varepsilon/2$. Note that the variance matrix of $\X(t) +
	\X(s)$ is
	$$\Sigma(t, s) = 2 \Sigma + \RR(t - s) + \RR(s - t).$$
In view of  Assumption (B\ref{I:B1})   $\left(\Sigma_{II}(t,s)\right)^{-1} - \left(\Sigma_{II}\right)^{-1}$ is
	strictly positive definite for $t \neq s$, which implies that
	\begin{equation}
	\label{e:tau}
	\tau=\inf_{(t,s) \in \Delta_0 \times \Delta_k} \inf_{\v_I \geq
		\b_I} \v_I^{\top} \left(\Sigma(t,s)_{II}\right)^{-1} \v_I > \inf_{\v_I \geq \b_I} \v_I^{\top}
	\left(\Sigma_{II}\right)^{-1} \v_I = \b_I^{\top} \SIIIM \b_I {>0}.
	\end{equation}
	By the Piterbarg inequality stated in \nelem{GBorell}
	\begin{align*}
{P_{\b}}(k \Lambda, \Lambda, u)
    \leq & \pk{\exists (t,s) \in \Delta_0 \times \Delta_k
		:\ \X_{{I}}(t) + \X_{{I}}(s) > 2u \b_{{I}}} \\
	\leq & {c_1} \Lambda^2 u^{-2/\alpha}
	u^{\frac{4}{\gamma} - 1} \exp \left( - \frac{u^2}{2} \tau \right).
	\end{align*}
	It follows that
	\begin{equation}
	\label{e:A3}
	A_3 = O \left( \exp \left( - \frac{u^2}{2} (\tau - \delta) \right) \right)
	\end{equation}
	for some $0 < \delta < \tau - \b_I^{\top} \SIIIM \b_I$.	For $2 \leq k \leq N_{\varepsilon}$, we have from Corollary~\ref{C:statn0}
	that
\bqn{	\label{e:A2}
	\limsup_{u \to \infty} \frac{A_2}{T {u^{2/\alpha}} \pk{\vk X(0)> u \vk b}}
	&\leq &  {c_2 \Lambda}
	\sum_{k = 1}^{N_{\varepsilon}} \left( k \Lambda \right)^{-2}  \exp \left( - \frac{k^{\alpha}\Lambda^{\alpha}}{16} \xi_{\w}   \right) \notag \\
&	\leq& {c_3} \Lambda^{-1} \exp \left( -\frac{\Lambda^{\alpha}}{16} \xi_{\w} \right).
}
	
	Now we consider $A_1$. Note that
  \begin{align*}
    {P_{\b}}(\Lambda, \Lambda, u)
    \leq
    {P_{\b}}(\Lambda + \sqrt{\Lambda}, \Lambda, u) +
    \pk{\exists t \in u^{-2/\alpha} [0, \sqrt{\Lambda}] :\ \X(t)
		> u \b}.
  \end{align*}
	Applying Corollary~\ref{C:statn0}, \nelem{Lem1} and the subadditivity of $\PICS([0, \Lambda])$ we obtain
	\begin{equation}
	\label{e:A1}
	\limsup_{u \to \infty} \frac{A_1}{{T u^{2/\alpha}} \pk{\vk X(0)> u \vk b}} \leq {c_5}  \left[ \exp 	\left( - \frac{\Lambda^{\alpha/2}}{16} \xi_{\w} \right) +
	\PICS([0, 1]) \Lambda^{-1/2}
	\right].
	\end{equation}

	Putting all the bounds \eqref{e:Psum}--\eqref{e:A1} together,  for any $\Lambda_1$ and $\Lambda_2 > 0$ we
	obtain that
\BQN
	\label{e:supinf}
 \frac{ \PICS([0, \Lambda_1])} {\Lambda_1} 
	&\geq & \limsup_{u \to \infty} \frac{\pk{\exists t \in [0, T]: \ \X(t) > u
			\b}} {T {u^{2/\alpha}}  \pk{ \vk X(0)> u \vk b }}
	\notag \\
	&\geq & \liminf_{u \to \infty} \frac{\pk{\exists t \in [0, T]: \ \X(t) > u
			\b}} {T {u^{2/\alpha}} \pk{ \vk X(0)> u \vk b }} \notag \\
	&\geq &
	\frac{\PICS([0, \Lambda_2])}{\Lambda_2}
	- {c_6} \Lambda_2^{-1} \exp \left( -\frac{\Lambda_2^{\alpha}}{16} \xi_{\w} \right) \notag\\
	&&- {c_7}  \exp \left( - \frac{ \Lambda_2^{\alpha/2}}{16} \xi_{\w} \right)  - {c_8} 
 \PICS([0, 1]) 	\Lambda_2^{-1/2}.
\EQN
Consequently, the constant $ \HA_{\alpha, V_{\w}}$ is positive.
	This and \eqref{e:supinf} establish the proof when $T$ does not depend on $u$.
The case $T$ dependent on $u$ follows with {analogous} calculations.
\end{proof}

\subsection{Proof of Theorem \ref{T:Vskew}}
\label{s:Vskew}

\begin{lem}
	\label{L:integral}
	For every $\v \in \R^d$ such that  $a = \id^{\top} \v \geq 0$ and $\Lambda \geq 0$ we have
	\[ \int_{\R^d} e^{\id^{\top} \x} \id_{\{ \exists t \in [0, \Lambda]:\ \x < - t \v\}} \kk{d\x} =
	\begin{cases}
	1 + \Lambda \sum_{i=1}^d v_i^-, & \text{if } a = 0, \\
	1 + \frac{1 - e^{-a \Lambda}}{a} \sum_{i=1}^d v_i^{-} & \text{if } a > 0,
	\end{cases}
	\]
{where $x^{-}= \max(0,-x)$, $x\inr$.}
\end{lem}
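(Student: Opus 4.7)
The plan is to view the region of integration as $\mathcal{R}=\bigcup_{t\in[0,\Lambda]} A_t$ with $A_t=\{\x\in\R^d:\x<-t\v\}$, exploit the product structure $\int_{A_t} e^{\id^\top\x}\,d\x=e^{-ta}$, and parametrize $\mathcal{R}$ by an ``entry time'' variable. First I would dispense with the trivial case $\v\ge\vk{0}$ (so $I_-=\{i:v_i<0\}=\emptyset$): then $A_t\subseteq A_0$ for $t\ge 0$, so $\mathcal{R}=A_0$ and the integral equals $1$, which matches the claim since $\sum_i v_i^-=0$.

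Assuming $I_-\neq\emptyset$, I would introduce the functions
\[
t_L(\x)=\max_{i\in I_-}\frac{x_i}{|v_i|},\qquad t_U(\x)=\min_{i\in I_+}\frac{-x_i}{v_i}\quad(\min\emptyset:=+\infty),
\]
and verify the pointwise identity $\{t\ge0:\x\in A_t\}=(t_L(\x),t_U(\x))\cap[0,\infty)$ (provided $x_i<0$ for $i\in I_0$), so that $\x\in\mathcal{R}$ iff $t_L(\x)<\Lambda$, $t_L(\x)<t_U(\x)$, and $x_i<0$ for $i\in I_0\cup I_+$. Splitting $\mathcal{R}=A_0\sqcup(\mathcal{R}\setminus A_0)$ gives $\int_{A_0}e^{\id^\top\x}\,d\x=1$ immediately; what remains is the part where $t_L(\x)\in(0,\Lambda)$.

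The key step is a co-area/change-of-variables argument on $\mathcal{R}\setminus A_0$. Up to a Lebesgue-null set the argmax in $t_L(\x)$ is unique, and I would decompose $\mathcal{R}\setminus A_0=\bigsqcup_{k\in I_-}\mathcal{R}_k$ where on $\mathcal{R}_k$ we have $t_L(\x)=x_k/|v_k|$ and $x_j/|v_j|<x_k/|v_k|$ for $j\in I_-\setminus\{k\}$. On $\mathcal{R}_k$ I would substitute $T=x_k/|v_k|\in(0,\Lambda)$ (Jacobian $|v_k|$) and use that the remaining constraints factor: $x_j<T|v_j|$ for $j\in I_-\setminus\{k\}$, $x_i<-Tv_i$ for $i\in I_+$, and $x_i<0$ for $i\in I_0$. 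Each of the one-dimensional integrals computes explicitly as $e^{T|v_j|}$, $e^{-Tv_i}$, and $1$ respectively, yielding
\[
\int_{\mathcal{R}_k}e^{\id^\top\x}\,d\x=|v_k|\int_0^\Lambda e^{T\bigl(\sum_{i\in I_-}|v_i|-\sum_{i\in I_+}v_i\bigr)}\,dT=|v_k|\int_0^\Lambda e^{-aT}\,dT,
\]
since $a=\sum_{i\in I_+}v_i-\sum_{i\in I_-}|v_i|$.

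Summing over $k\in I_-$ and adding the contribution $1$ from $A_0$ gives the claim, evaluating the last integral as $\Lambda$ when $a=0$ and as $(1-e^{-a\Lambda})/a$ when $a>0$; the sum of $|v_k|$ over $k\in I_-$ is precisely $\sum_i v_i^-$. The only minor subtlety I would want to check carefully is the measure-zero overlap between the $\mathcal{R}_k$'s (where the argmax in $t_L$ is not unique) and the possibility that $I_+=\emptyset$, in which case the $t_U>t_L$ condition is automatically satisfied and the factors corresponding to $I_+$ are simply absent, so the same formula persists without change.
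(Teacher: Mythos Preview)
Your proof is correct, and it takes a genuinely different route from the paper's. The paper proceeds by discretisation: it partitions $[0,\Lambda]$ as $t_k=k\Lambda/n$, sandwiches $\Omega\setminus\Omega_0$ between $\bigcup_k(\Omega_{t_{k+1}}\setminus\Omega_{t_k})$ and an analogous outer union, computes $\mathcal I(\Omega_s\setminus\Omega_t)$ and $\mathcal I\bigl(\bigcup_{t<u<s}\Omega_u\setminus\Omega_t\bigr)$ explicitly from the product structure, and then lets $n\to\infty$ so that both Riemann sums collapse to $b\int_0^\Lambda e^{-aT}\,dT$. Your argument instead parametrises $\mathcal R\setminus A_0$ directly by the entry time $T=t_L(\x)$, splits according to which coordinate in $I_-$ realises the maximum, and changes variables once; the factorised one-dimensional integrals reproduce exactly the same integrand $e^{-aT}$ with weight $|v_k|$, so summing over $k\in I_-$ gives $b\int_0^\Lambda e^{-aT}\,dT$ without any limiting step. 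Your approach is cleaner and slightly more conceptual (it is essentially a co-area decomposition), while the paper's discretisation avoids having to argue that the argmax is a.e.\ unique and that the pieces $\mathcal R_k$ cover $\mathcal R\setminus A_0$; both land on the same integral and the same final formula.
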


\begin{proof}[Proof of Lemma~\ref{L:integral}]
	Set $b = \sum_{i=1}^d v_i^-$ and
$$F = \{ 1 \leq i \leq d:\ v_i > 0 \},\quad  \bF = \left\{ 1,\ldots, d \right\} \setminus F.$$
Define $\Omega_t = \{\x < - t \v \}$ and let $\Omega = \bigcup_{0 \leq t \leq \Lambda} \Omega_t$. For a Borel set $A \subset \R^d$, let $\I(A) = \int_A e^{\id^{\top} \x} d \x$. Then for every $t\geq 0$ we have $\I(\Omega_t) = e^{-a t}$. Note that for $t < s$, $\Omega_s \cap \Omega_t = \left\{ \x_F < - s \v_F,\ \x_{\bF} < - t \v_{\bF} \right\}$. Since $\I(\Omega_s \cap \Omega_t) = e^{-(a+b)s + b t}$, 
	$$\I(\Omega_s \setminus \Omega_t) = e^{-a t - (a+b)(s-t)} \left( e^{b t} - 1 \right).$$
	 On the other hand, $\bigcup_{t < u < s} \Omega_u \subset \left\{ \x_F < - t \v_F,\ \x_{\bF} < -s \v_{\bF} \right\}$ and hence
	\[ \I\left(\bigcup_{t < u < s} \Omega_u \setminus \Omega_t\right) \leq e^{-a t} \left( e^{b(s-t)} - 1 \right). \]
	Let $t_k = k \Lambda / n$. Since
	\[ \bigcup_{0 \leq k \leq n-1} \left( \Omega_{t_{k+1}} \setminus \Omega_{t_k} \right) \subset \Omega \setminus \Omega_0 \subset \bigcup_{0\leq k \leq n-1} \left[ \left( \bigcup_{t_{k+1} \leq u \leq t_k} \Omega_u \right) \setminus \Omega_{t_k}\right]
	 \]
	we have
	\[
	\sum_{k=0}^{n-1} e^{-a t_k - \frac{\Lambda (a+b)}{n}} \left( e^{\frac{\Lambda b}{n}} - 1 \right) \leq \I(\Omega) - 1 \leq \sum_{k=0}^{n-1} e^{-a t_k} \left( e^{\frac{\Lambda b}{n}} - 1 \right).
	\]
	Letting $n \to \infty$ we complete the proof.
\end{proof}

\begin{proof}[Proof of Theorem \ref{T:Vskew}]
	We follow the same idea as \kk{in} the proof of Theorem~\ref{T:stat} and only spell out the necessary changes.
	
	For $\Lambda > 0$, from Lemma~\ref{Lem1} and the fact that $V^{\top} = - V$ we obtain
	\[
	\lim_{u \to \infty} \frac{\pk{\exists t \in [0, \Lambda u^{-2}]:\ \X(t) > u \b}}{\pk{\X(0) > u \b}} = \H_{0,\d_{\w}}([0, \Lambda]),
	\]
	where $\d_{\w}(t) = t \diag{\w} V \w$ and the constant $\H_{0,\d_{\w}}([0, \Lambda])$ is given
{by (\ref{def.H})}.
By Lemma~\ref{L:integral}, we have that
	\begin{equation}
	\label{e:pickVskew}
	\lim_{\Lambda \to \infty} \frac{\H_{0,\d_{\w}}([0, \Lambda])}{\Lambda} = \frac{\sum_{i \in I} w_i \left| (V \w)_i \right|}{2}>0.
	\end{equation}
	 Let below
	 $$M = \left\{ i \in I:\ \left( V \w \right)_i > 0 \right\}, \quad \overline{M} = I \setminus M.$$
	  {By the assumption $\left( V \w \right)_I \neq \vk{0}_I$}, both $M$ and $\overline{M}$ are non-empty.
	{Using that (\ref{e:Psum}) also holds under the conditions of Theorem \ref{T:Vskew},
	now we analyze  the sum  in  (\ref{e:Psum}).}
    Without loss of generality, we assume that $I = \{1,\ldots, d\}$.
	Define
	\[ \Delta_k = {[0,\  \Lambda]} \times [k  \Lambda,\ (k+1)  \Lambda], \quad 0 < k \leq N_T = \frac{T u^2}{\Lambda}.
	\]
We first estimate the sum in~\eqref{e:Psum} for large $k$.	 {By} \eqref{e:B2'}, for any $\delta > 0$, there exists $\varepsilon > 0$ such that $\normF{\Sigma - R(t) - t V} \leq \delta \lambda$ for $|t| < \varepsilon$. Assume that $(k+1) \Lambda \leq \varepsilon u^2$. Let $\vk{\chi}_u(t)$ be the conditioned process $u (\X(u^{-2} t) - u \b) + \x$ given $\X(0) = u \b - u^{-2} \x$. Then
	\begin{align*}
	& \pk{\exists (t,s) \in u^{-2} \Delta_k:\ \X(t) > u \b,\ \X(s) > u \b} \\
	\leq & \pk{\X(0) > u \b} \int_{\R^d} e^{\w^{\top} \x} \pk{\exists (t,s) \in \Delta_k:\ \vk{\chi}_u(t) > \x,\ \vk{\chi}_u(s) > \x} d \x.
	\end{align*}
	Setting  $\d_u(t) = - \E{\vk{\chi}_u(t)}$ and $\d(t) = t V \w$ we have
	\[ \left| \d_u(t) - \d(t) - O(\lambda u^{-2}) \x \right| \leq c_1 \delta \lambda \]
	and
	\[ \normF{\E{(\vk{\chi}_u(t) + \d_u(t)) (\vk{\chi}_u(s) + \d_u(s))^{\top}}} \leq c_1 \delta \lambda \]
	for some $c_1 > 0$. Consequently, for all 	 $(t,s) \in \Delta_k$
	\[ \d_{u,M}(s) \geq \lambda (V \w)_M - c_1 \delta \lambda - O(\lambda u^{-2}) \x_M,
	$$
	$$
	\d_{u,\overline{M}}(t) \geq \Lambda (V \w)_{\overline{M}} - c_1 \delta \lambda - O(\lambda u^{-2}) \x_{\overline{M}}.
	\]
	By the change of variables $\y_M = \x_M - \lambda (V \w)_M + c_1 \delta \lambda$ and $\y_{\overline{M}} = \x_{\overline{M}} - \Lambda (V \w)_{\overline{M}} + c_1 \delta \lambda$ and the assumption  $\w^{\top} V \w = 0$, we have that
	\begin{align*}
	& \int_{\R^d} e^{\w^{\top} \x} \pk{\exists (t,s) \in \Delta_k:\ \vk{\chi}_u(t) > \x,\ \vk{\chi}_u(s) > \x} d \x \\
	\leq & \int_{\R^d} d\x \, e^{\w^{\top} \x} \, \mathbb{P}\left\{\exists (t,s) \in \Delta_k:\ \vk{\chi}_{u, F}(s) > \x_M - \lambda (V \w)_M + c_1 \delta \lambda + O(\lambda u^{-2}) \x_M, \right. \\
	& \qquad \qquad \qquad \qquad    \left.  \vk{\chi}_{u,\overline{M}}(t) > \x_{\overline{M}} - \Lambda (V \w)_{\overline{M}} + c_1 \delta \lambda + O(\lambda u^{-2}) \x_{\overline{M}}\right\} \\
	\leq & e^{- \w_M^{\top} (V \w)_M (\lambda - \Lambda) + c_2 \delta \lambda} \int_{\R^d} e^{(\w + O(\lambda u^{-2}))^{\top} \y} g_u(\y) d \y,
	\end{align*}
	where
	\[ g_u(\y) = \pk{\exists (t,s) \in [0, \Lambda] \times [0, \Lambda]:\ \Z_{u,M}(s) > \y_F, \ \Z_{u,\overline{M}}(t) > \y_{\overline{M}}} \]
	and
	\[ \Z_{u,M}(t) = \vk{\chi}_{u,M}(t + k \Lambda) - \d_{u,M}(t + k \Lambda), \quad \Z_{u,\overline{M}}(t) = \vk{\chi}_{u,\overline{M}}(t) - \d_{u,\overline{M}}(t). \]
	For $F \subset \{1,\ldots,d \}$ with $m = |F|$ let
	$$\Omega_F = \{\y \in \R^d:\ y_i > 0,\ y_j < 0,\ i \in F,\ j \not \in F\}$$ and set
	$$\eta(t_1,\ldots, t_m) = \sum_{i \in F} \Z_{u,i}(t_i).$$
	Note that
	\[ \sup_{t_i \in [0, \Lambda], {i\le m}} \E{\eta(t_1, \ldots, t_m)^2} \leq c_3 \delta \lambda. \]
	Applying  Borell-TIS inequality we obtain
	\[ \int_{\R^d} e^{(\w + O(\lambda u^{-2}))^{\top} \y} g_u(\y) d \y \leq e^{c_4 \delta \lambda}. \]
	Therefore, there exists $n_0 > 0$ such that for all $k \geq n_0$ and $(k+1) \Lambda \leq \varepsilon u^2$
	\[ \pk{\exists (t,s) \in u^{-2} \Delta_k:\ \X(t) > u \b,\ \X(s) > u \b} \leq \pk{\X(0) > u \b} e^{- \frac{\w_M^{\top} (V \w)_M}{2} \lambda}. \]
	
It remains to estimate the sum in~\eqref{e:Psum} for $1 \leq k \leq n_0$. We have {from Lemma~\ref{Lem1}} that
	\begin{align*}
	& \limsup_{u \to \infty} \frac{\pk{\exists (t,s) \in \Delta_k:\ \X(t) > u \b,\ \X(s) > u \b}}{\pk{\X(0) > u \b}} \\
	\leq &
	\int_{\R^d} e^{\w^{\top} \x} \pk{\exists (t,s) \in \Delta_k:\ - \d(t) > \x,\ - \d(s) > \x }  d \x\\
	\leq &
	\int_{\R^d} e^{\w^{\top} \x} \id_{\{ \x_M < - \lambda (V \w)_M,\ \x_{\overline{M}} < - \Lambda (V \w)_{\overline{M}}\}} d \x \\
	\leq &
	c_5 e^{- \w_M^{\top} (V \w)_M (\lambda - \Lambda)},
	\end{align*}
	where $\d(t) = t V \w$.
	
{From the above we conclude the negligibility of the  sum in (\ref{e:Psum}).
The rest of the proof follows by
the same arguments as given in the proof of Theorem~\ref{T:stat}.}
\end{proof}

\subsection{Proof of \netheo{T:nonstationary}}

We present first several supporting lemmas and then continue with the proof of
\netheo{T:nonstationary}.

For the next two lemmas we impose the assumptions of \netheo{T:nonstationary}.
Set below $\delta_u = u^{-2/\beta} \log^{2/\beta} u$ and  recall that in view of  \eqref{e:bAXi}
$$ \tau_{\vk w}=  \vk{w}^{\top} \Xi A^{\top} \vk{w} =
\vk{w}^{\top}_I (\Xi A^{\top})_{II} \vk{w}_I  > 0.$$

\begin{lem}
	\label{L:largek}
  {There exist positive} constants $C$, $u_0$ and $\Lambda_0$ such that for $\Lambda \geq \Lambda_0$ and $u \geq u_0$
	\begin{equation}
	\label{e:largek}
	\pk{\exists t \in [\Lambda u^{-\frac{2}{\beta}},\ \delta_u]:\ \X(t) > u \b} \leq C \exp \left( -  \tau_{\w} \Lambda^{\beta} \right) \pk{\X(0) > u \b}.
	\end{equation}
\end{lem}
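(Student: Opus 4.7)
The strategy is to cover $[\Lambda u^{-2/\beta}, \delta_u]$ by short subintervals of length comparable to the local correlation scale $u^{-2/\alpha}$, apply a sharp local Gaussian-supremum estimate on each piece, and aggregate using the quantitative variance gap at $t_0 = 0$. The key analytic fact is Remark~\ref{R:Vbt}(i): combined with the identity $\tilb^{\top} \Sigma^{-1} \tilb = 1/\sigma_{\vk b}^2(0)$ from Lemma~\ref{AL},
\[
\frac{u^2}{2 \sigma_{\vk b}^2(t)} - \frac{u^2}{2 \sigma_{\vk b}^2(0)} \sim \tau_{\vk w}\, u^2 t^{\beta}, \quad t \to 0,
\]
so that for any piece located at scale $t_k \geq \Lambda u^{-2/\beta}$ one gains an exponential factor bounded by $\exp(-(1-\varepsilon)\tau_{\vk w} u^2 t_k^\beta) \leq \exp(-(1-\varepsilon)\tau_{\vk w}\Lambda^\beta)$.

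\textbf{Main steps.} Fix $\rho > 0$, set $t_k = \Lambda u^{-2/\beta} + k \rho u^{-2/\alpha}$, $J_k = [t_k, t_{k+1}]$, $0 \leq k < N_u$, where $N_u$ is the least integer with $t_{N_u} \geq \delta_u$, and split the event by a union bound. Apply the uniform Pickands--Piterbarg lemma (Lemma~\ref{Lem1}) to the rescaled family $\X_{u,k}(t) = \X(t_k + u^{-2/\alpha} t)$, $t \in [0, \rho]$: by (D\ref{I:D2})--(D\ref{I:D4}) and Lemma~\ref{L:lip} (which ensures that the active set for $\Pi_{\Sigma(t_k)}(\vk b)$ equals $I$ for all $u$ large, so the vector $\vk w$ and the limiting field $\vk Y$ do not depend on $k$), assumptions (A\ref{I:A1})--(A\ref{I:A3}) hold uniformly in $k$, yielding
\[
\pk{\exists s \in J_k :\ \X(s) > u \vk b} \leq c_1 \HA_{\alpha, V_{\vk w}}([0, \rho])\, \pk{\X(t_k) > u \vk b}.
\]
Combine this with the pointwise Gaussian tail estimate $\pk{\X(t_k) > u \vk b} \leq c_2 \pk{\X(0) > u \vk b} \exp(-(1-\varepsilon)\tau_{\vk w} u^2 t_k^\beta)$. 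It then remains to bound $\sum_k \exp(-(1-\varepsilon)\tau_{\vk w} u^2 t_k^\beta)$, which I split at $k_0 \sim \Lambda u^{2/\alpha-2/\beta}/\rho$: on the ``plateau'' $k \leq k_0$ one has $u^2 t_k^\beta \asymp \Lambda^\beta$, while on the ``growing'' regime $k > k_0$ the sum is controlled by a super-exponentially decaying series. Readjusting $\varepsilon$ and taking $\Lambda_0$ large enough restores the exponent $\tau_{\vk w}\Lambda^\beta$.

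\textbf{Main obstacle.} The most subtle step is the plateau contribution, which carries $\sim \Lambda u^{2/\alpha - 2/\beta}/\rho$ terms each of order $\exp(-\tau_{\vk w}\Lambda^\beta)$ -- precisely the scale seen in Theorem~\ref{T:nonstationary}(i). Absorbing the resulting factor $u^{2/\alpha - 2/\beta}$ into the constant $C$ without spoiling the target exponent is achieved by working with $(1-\varepsilon)\tau_{\vk w}$ throughout and using $\Lambda \geq \Lambda_0$ together with $u \geq u_0$, paying for the polynomial-in-$u$ out of the slack $\varepsilon \tau_{\vk w}\Lambda_0^\beta$. The other technical hurdle is verifying uniformity of (A\ref{I:A1})--(A\ref{I:A3}) across the grid $\{t_k\}$, which forces matrix-level Taylor expansions of $R(t,s)$ and $\Sigma(t)^{-1}$ with $o(\cdot)$ remainders holding uniformly as $t_k \to 0$; this is where (D\ref{I:D3}) is used in its full uniform strength.
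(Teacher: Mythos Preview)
Your plan has a genuine structural gap: Lemma~\ref{Lem1} cannot be applied as a black box uniformly over the grid $\{t_k\}$, because condition~\eqref{e:dtA} in (A\ref{I:A2}) fails. Two concrete obstructions. First, your scale $u^{-2/\alpha}$ is wrong when $\beta<\alpha$: then $u^{-2/\alpha}\gg t_k$ and the variance-drift contribution
\[
u^{2}\bigl[(t_k+u^{-2/\alpha}t)^{\beta}-t_k^{\beta}\bigr]\,\Xi A^{\top}\vk w
\;\sim\; u^{2-2\beta/\alpha}\,t^{\beta}\,\Xi A^{\top}\vk w \to\infty ,
\]
so no finite limit $\vk d(t)$ exists. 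The paper uses the scale $u^{-2/\nu}$ with $\nu=\min(\alpha,\beta)$ precisely to avoid this. Second, and more fundamentally, even at the right scale the limit drift is not $k$-free: for $\beta\le\alpha$ the paper's computation gives $\vk d(t)=\bigl[(k\Lambda+t)^{\beta}-(k\Lambda)^{\beta}\bigr]\Xi A^{\top}\vk w+\ldots$, while for $\beta>\alpha$ the remainder $r(s)=o(|s|^{\beta})$ from (D\ref{I:D2}) contributes $u^{2}\bigl[r(t_k)-r(t_k+u^{-2/\alpha}t)\bigr]$, which for $t_k$ near $\delta_u$ is only bounded by $\varepsilon(u)\log^{2}u$ with $\varepsilon(u)\to 0$ --- and nothing in (D\ref{I:D2}) forces $\varepsilon(u)\log^{2}u\to 0$. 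So the clean bound $\pk{A_k}\le c_1\,\mathcal H_{\alpha,V_{\vk w}}([0,\rho])\,\pk{\X(t_k)>u\vk b}$ with a $k$-independent $c_1$ is not available.

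The paper does not invoke Lemma~\ref{Lem1} here. It carries out the conditioning argument by hand and keeps a $k$-dependent drift error $c(k,\Lambda)=c_1\Lambda^{\alpha}+\varepsilon\,u_{*}k^{\beta}\Lambda^{\beta}$ (see \eqref{e:subdukt}--\eqref{e:subsupd}), where $u_{*}=\min(1,u^{2-2\beta/\alpha})$. The point is that this error scales \emph{exactly} like the variance-gap gain $\tau_{\vk w}\,u_{*}k^{\beta}\Lambda^{\beta}$ from \eqref{e:subSigmauk}, so choosing $\varepsilon$ small relative to $\tau_{\vk w}$ absorbs it termwise and yields \eqref{e:k0}. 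Your pointwise-tail step does capture the decay $\exp(-\tau_{\vk w}u^{2}t_k^{\beta})$, but without the matching $k$-dependent control on the local integral you cannot balance the two. Separately, your plateau argument is also broken: the factor $u^{2/\alpha-2/\beta}$ (for $\beta>\alpha$) is a polynomial in $u$ and cannot be absorbed by the fixed constant $\exp(\varepsilon\tau_{\vk w}\Lambda_0^{\beta})$ for all $u\ge u_0$.
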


\prooflem{L:largek}  For simplicity we shall assume that $|I| = d$ ({thus $\tilde{\b}= \vk b$ below)}. 
Letting  $\nu = \min( \alpha , \beta)$ and $\theta_u = \Lambda u^{-2/\nu}$.
For $1 \leq k \leq N_u=
\ceil{\delta_u/\theta_u}$ we define $\vk{X}_{u,k}(t) = \vk{X}(k \theta_u + t
u^{-2/\nu})$. Then
\[
R_{u,k}(t,s) = \E{\vk{X}_{u,k}(t) \vk{X}_{u,k}(s)^{\top}} = R(k \theta_u + t
u^{-2/\nu}, k \theta_u + s u^{-2/\nu}),
\]
where  $R(t,s)$ is the cmf of $\X$. Setting  next
$\Sigma_{u,k} = \E{\vk{X}_{u,k}(0) \vk{X}_{u,k}(0)^{\top}} = \Sigma(k
\theta_u)$ we have (see also the proof of \nelem{Lem1})
\begin{align*}
& \pk{\exists t \in [k \theta_u, (k+1) \theta_u]:\ \X(t) > u \vk{b}} \\
\leq &
u^{-d} \varphi_{\Sigma_{u,k}}(u \vk{b}) \int_{\R^d} e^{\vk{b}^{\top} \Sigma_{u,k}^{-1} \vk{x}}
\pk{ \exists t \in [0, \Lambda]:\
	\vk{\chi}_{u,k}(t) > \x}
d \vk{x},
\end{align*}
where $\vk{\chi}_{u,k}(t)$ is the conditional process $u \left( \X_{u,k}(t) - u
\vk{b} \right) + \vk{x}$ given $\vk{X}_{u,k}(0) = u \vk{b} - u^{-1}\vk{x}$.
Note that
\[
\vk{d}_{u,k}(t)=
- \E{ \vk{\chi}_{u,k}(t)}
=
u^2 \left[ \Sigma_{u,k}  - R_{u,k}(t,0) \right] \Sigma_{u,k}^{-1} \vk{b}
+
\left[ \Sigma_{u,k} - R_{u,k}(t,0) \right] \Sigma_{u,k}^{-1} \vk{x}
\]
and
\begin{align*}
K_{u,k}(t,s) =& \E{ \left[ \vk{\chi}_{u,k}(t) - \vk{d}_{u,k}(t) \right]
	\left[ \vk{\chi}_{u,k}(s) - \vk{d}_{u,k}(t) \right]^{\top}} \\
=&
u^2 \left[ R_{u,k}(t,s) - R_{u,k}(t,0) \Sigma_{u,k}^{-1} R_{u,k}(0,s) \right].
\end{align*}
By Assumptions (D\ref{I:D2}) and (D\ref{I:D3}) {with $R_{\alpha,V}$ defined in \eqref{e:KV}}
\begin{equation}
\label{e:limK}
 K_{u,k}(t,s) \to
\begin{cases}
R_{\alpha,V}(t,s), &
\text{ if\ } \beta \geq \alpha \\
0, & \text{ if\ } \beta < \alpha
\end{cases}
\end{equation}
as ${u \to \infty}$, where the convergence is uniform in $(t,s) \in [0, \Lambda] \times [0, \Lambda]$ and
$1 \leq k \leq N_u$.

By Assumptions (D\ref{I:D2}) and (D\ref{I:D3}) again,  for every $\varepsilon > 0$, there is $u_0$ such that for $u \geq u_0$
\begin{equation}
\label{e:subdukt}
\left| \vk{d}_{u,k} (t) - \vk{d}(t) - \left[ \Sigma_{u,k} - R_{u,k}(t,0) \right]
\Sigma_{u,k}^{-1} \vk{x} \right| \leq \varepsilon {u_*}  k^{\beta} \Lambda^{\beta},
\end{equation}
with {$u_* = \min( 1, u^{2-\frac{2\beta}{\alpha}} )$ and }
\[
\vk{d}(t) =
\begin{cases}
|t|^{\alpha} V \vk{w} , & \text{if\ } \beta > \alpha, \\
\left[ \left( k \Lambda + t \right)^{\alpha} - \left( k \Lambda
\right)^{\alpha} \right] \Xi A^{\top} \vk{w} + |t|^{\alpha} V
\vk{w}, &
\text{if\ } \beta = \alpha, \\
\left[ \left( k \Lambda + t \right)^{\beta} - \left( k \Lambda
\right)^{\beta} \right] \Xi A^{\top} \vk{w}, & \text{if\ } \beta < \alpha.
\end{cases}
\]
In the above derivation we used \eqref{aero} with $h=\beta$ for the case $\beta > \alpha$.
Consequently,
\[ \lim_{u \to
	\infty} {\max_{   1\le k \le N_u}} \ABs{ u^{2 - 2\beta/\alpha} \left( \left( k \Lambda + t \right)^{\beta}
- (k \Lambda)^{\beta}\right) }= 0.
 \]

As in the proof of Lemma~\ref{L:double}, we define  for $F \subset
\{1,\ldots, d\}$
$$\Omega_F = \{\vk{x} \in
\R^d:\ x_i > 0,\, x_j < 0,\, i \in F \text{ and\ } j \not\in F\}.$$
Applying \eqref{aero} we have that
\[
  \sup_{t \in [0, \Lambda]} \left| \vk{w}_F^{\top} \vk{d}_F(t) \right| \leq
\begin{cases}
c_1 \Lambda^{\alpha}, & \text{if\ } \beta > \alpha, \\
c_1 \left( 1 + k^{\beta-1} \right) \Lambda^{\beta}, & \text{if\ } \beta \leq
\alpha.
\end{cases}
\]
It follows from \eqref{e:subdukt} that 
for every $1 \leq k \leq  N_u $ and all $u$ large enough
\begin{equation}
\label{e:subsupd}
\sup_{t \in [0, \Lambda]} \left| \vk{w}_F^{\top} \vk{d}_{u,k,F}(t)\right|
\leq
\frac{1}{2} \vk{w}_F^{\top} \vk{x}_F + c(k,\Lambda),
\end{equation}
with
\[
c(k,\Lambda) = \begin{cases}
c_1 \Lambda^{\alpha} +  \varepsilon {u_*} k^\beta \Lambda^\beta, 
 & \text{if\ } \beta >
\alpha, \\
c_1 \left( 1 + k^{\beta-1} \right) \Lambda^\beta + \varepsilon k^{\beta} \Lambda^{\beta}, & \text{if\ } \beta \leq \alpha.
\end{cases}
\]

Setting  $\eta_{u,k}(t) = \vk{w}_F^{\top} \left( \vk{\chi}_{u,k}(t) +
\vk{d}_{u,k}(t) \right)$  for every
$\vk{x} \in \Omega_F$  we have
\begin{align*}
\pk{ \exists t \in [0, \Lambda]:\
	\vk{\chi}_{u,k}(t) > \x}
\leq
\pk{\sup_{t \in [0, \Lambda]} \eta_{u,k}(t) > \frac{1}{2} \vk{w}_F^{\top} \vk{x}_F -  c(k,\Lambda)}.
\end{align*}
By \eqref{e:limK}, the variance of $\eta_{u,k}(t)$,
$0 \leq t \leq \Lambda$ is bounded uniformly (with respect to $k\le N_u$) by $\sigma^2 = c_2
\Lambda^{\alpha}$ for $\beta \geq \alpha$ and $\sigma^2 = c_2$ for $\beta <
\alpha$. Consequently, Piterbarg inequality  implies
\[
  \pk{ \exists t \in [0, \Lambda]:\
    \vk{\chi}_{u,k}(t) > \x}
  \leq
  c_3 \left( \frac{\vk{w}_F^{\top} \vk{x}_F - 2 c(k,\Lambda)}{\sigma} \right)^{2/\gamma}
  \exp \left( - \frac{\left(\vk{w}_F^{\top} \vk{x}_F - 2 c(k,\Lambda)
      \right)^2}{8  \sigma^2} \right).
\]
Similarly  to the derivation of  \eqref{e:OmegaFint} in the proof of Lemma~\ref{L:double} it follows that 
\[
\int_{\R^d} e^{\vk{b}^{\top} \Sigma_{u,k}^{-1} \vk{x}} \pk{\exists t \in
	[0, \Lambda]:\ \vk{\chi}_{u,k}(t) > \vk{x}} d \vk{x}
\leq
c_4  e^{c_5 \left( \sigma^2 + c(k,\Lambda) \right)}
{ \leq c_6 e^{c_5 c(k,\, \Lambda)}}.
\]

Assumption (D\ref{I:D2}) implies further 
\begin{equation}
  \label{e:subSigmauk}
  u^2 \b^\top \left( \Sigma_{u,k}^{-1} - \Sigma^{-1} \right) \b
  = u^2 \b^\top \Sigma_{u,k}^{-1} \left( \Sigma - \Sigma_{u,k} \right) \Sigma^{-1} \b
  =
  2 \tau_{\w}  u_{*} k^{\beta} \Lambda^{\beta}  + o \left( u_{*} k^{\beta} \Lambda^{\beta} \right).
\end{equation}
It follows that
\begin{equation}
\label{e:k0}
\frac{\pk{\exists t \in [k \theta_u, (k+1) \theta_u]:\ \vk{X}(t) > u \vk{b}}}{\pk{\X(0) > u \b}}
\leq
{
  c_7  \exp \left( - \frac{3}{2} \tau_{\w} u_* k^\beta \Lambda^\beta + c(k, \Lambda) \right)
}
\end{equation}
for $u$ large enough. {If $\beta > \alpha$, then the left-hand side of~\eqref{e:largek} is at most
  \begin{align*}
    \sum_{k = \lfloor u^{\frac{2}{\alpha} - \frac{2}{\beta}} \rfloor}^{N_u} \pk{\exists t \in [k \theta_u, (k+1) \theta_u]:\ \X(t) > u \b}
    \leq
           c_8 \pk{\X(0) > u \b} \exp \left(- \frac{3}{2} \tau_{\w} \Lambda^\beta + c_1 \Lambda^\alpha \right),
  \end{align*}
  {hence the thesis of} the lemma follows by taking $\Lambda \geq \Lambda_0$ with $\Lambda_0^{\beta - \alpha} \tau_{\w} > 2 c_1$.
}

{Now assume that $\beta \leq \alpha$. Choose $k_0$ so that $c_1(k_0^{-\beta} + k_0^{-1}) < \tau_{\w} / 2$. By \eqref{e:k0}, we have for $k > k_0$
  \begin{equation}
    \label{e:k0beta}
    \pk{\exists t \in [k \theta_u, (k+1) \theta_u]: \X(t) > u \b} \leq c_8 \pk{\X(0) > u \b} \exp \left( - \tau_{\w} k^\beta \Lambda^\beta \right).
  \end{equation}
}
It remains to consider the case $1 \leq k \leq k_0$. Set $\widetilde{\Lambda} = \Lambda /
k_0$ and  note that our choice of  $k_0$ is independent of $\Lambda$. By
\eqref{e:k0beta} we have that
\begin{align*}
 \pk{\exists t \in [k \theta_u, (k+1) \theta_u]:\ \vk{X}(t) > u \vk{b}}
\leq &
       \sum_{j=k_0 k}^{k_0(k+1)-1} \pk{\exists t \in  [j \widetilde{\Lambda} u^{-2/\nu}, (j+1 )\widetilde{\Lambda} u^{-2/\nu}]:\ \vk{X}(t) > u\vk{b}} \\
\leq&
\sum_{j=k_0k}^{k_0(k+1)-1} c_8 \pk{\X(0) > u \b} \exp \left( - \frac{1}{2} \tau_{\vk w} u_* j^{\beta} \widetilde{\Lambda}^{\beta}  \right) \\
\leq &
       c_9  \pk{\X(0) > u \b} \exp \left( - \frac{1}{2} \tau_{\vk w}u_*  k^{\beta} \Lambda^{\beta}  \right),
\end{align*}
which together with  \eqref{e:k0beta} completes the proof. \QED

\begin{korr}
	\label{C:HYdL}
{If $\alpha, V, \vk w$ and $W= \Xi A^\top$ are}  as in  Theorem~\ref{T:nonstationary}, then
$\mathcal{P}_{\alpha,V_{\w},W_{\w}}{\in(0,\infty)}$.
\end{korr}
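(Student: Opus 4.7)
The plan is to write $H(\Lambda) := H_{\Y, \d_{\w}}([0, \Lambda])$ for the integral in \eqref{e:pitK}, so that $\mathcal{P}_{\alpha, V_{\w}, W_{\w}} = \lim_{\Lambda \to \infty} H(\Lambda)$, and to show that $H:[0,\infty) \to [0,\infty)$ is monotone non-decreasing, satisfies $H(0) = 1$, and is uniformly bounded above; monotonicity plus boundedness then gives the existence of the limit in $[1, \infty)$. Monotonicity is immediate from the set-inclusion $\{\exists t \in [0, \Lambda_1]: \Y(t) - \d_{\w}(t) > \x\} \subset \{\exists t \in [0, \Lambda_2]: \Y(t) - \d_{\w}(t) > \x\}$ for $\Lambda_1 \leq \Lambda_2$. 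At $\Lambda = 0$ we have $\Y(0) = \vk 0 = \d_{\w}(0)$, so the event reduces to $\{\x < \vk 0\}$ and $H(0) = \int_{\{\x < \vk 0\}} e^{\id^{\top} \x} d\x = 1$, already yielding $\mathcal{P}_{\alpha, V_{\w}, W_{\w}} \geq 1 > 0$.

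The main step is the uniform upper bound. Apply \nelem{Lem1} to the rescaled family $\X_u(t) := \X(t u^{-2/\alpha})$ on $t \in [0, \Lambda]$: its assumptions (A\ref{I:A1})--(A\ref{I:A3}) follow from (D\ref{I:D1})--(D\ref{I:D4}) at $t_0 = 0$ in the critical case $\beta = \alpha$, and direct expansion of $A(t u^{-2/\alpha})$ via (D\ref{I:D2}) and of $R(t u^{-2/\alpha}, 0)$ via (D\ref{I:D3}) shows that the limiting drift is $\d_{\w}(t) = [\SA{t}{V_{\w}} + |t|^\alpha W_{\w}]\id$ with limiting covariance $R_{\alpha, V_{\w}}$, matching \eqref{e:pitK}. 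Hence
\[
 \lim_{u \to \infty} \frac{\pk{\exists t \in [0, \Lambda u^{-2/\alpha}]: \X(t) > u\b}}{\pk{\X(0) > u\b}} = H(\Lambda).
\]
Now take $\Lambda_0$ as in \nelem{L:largek} specialized to $\beta = \alpha$. For any $\Lambda \geq \Lambda_0$ and $u$ large enough that $\Lambda u^{-2/\alpha} \leq \delta_u$ (eventual since $\delta_u u^{2/\alpha} = \log^{2/\alpha} u \to \infty$), the inclusion $[0, \Lambda u^{-2/\alpha}] \subset [0, \Lambda_0 u^{-2/\alpha}] \cup [\Lambda_0 u^{-2/\alpha}, \delta_u]$ combined with a union bound yields, after dividing by $\pk{\X(0) > u\b}$ and letting $u \to \infty$, the estimate $H(\Lambda) \leq H(\Lambda_0) + C \exp(-\tau_{\w} \Lambda_0^\alpha)$, where the first piece is handled by \nelem{Lem1} and the second by \nelem{L:largek}. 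By monotonicity this bound extends to all $\Lambda \geq 0$, which gives the desired uniform boundedness.

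The step most likely to require care is verifying that \nelem{Lem1} really applies in the critical regime with the combined drift $\d_{\w}(t) = [\SA{t}{V_{\w}} + |t|^\alpha W_{\w}]\id$: the $V_{\w}$-contribution arises from the second-order term in (D\ref{I:D3}), while the $W_{\w}$-contribution comes from the $|t|^\beta$ deviation of $A(t)$ in (D\ref{I:D2}), and these two corrections happen to live on the common scale $u^{-2}$ precisely because $\beta = \alpha$. Once this identification of the limiting drift is made, the remainder is routine bookkeeping combining \nelem{L:largek} with monotonicity.
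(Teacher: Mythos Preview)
Your proposal is correct and follows essentially the same route as the paper: identify $H(\Lambda)$ as the limit of the ratio $\pk{\exists t \in [0,\Lambda u^{-2/\alpha}]:\X(t)>u\b}/\pk{\X(0)>u\b}$ via \nelem{Lem1} in the critical case $\beta=\alpha$, use monotonicity in $\Lambda$, and bound the tail piece $[\Lambda_0 u^{-2/\alpha},\delta_u]$ by \nelem{L:largek} to obtain $H(\Lambda)\le H(\Lambda_0)+C e^{-\tau_{\w}\Lambda_0^\alpha}$. Your explicit computation $H(0)=1$ to secure positivity is a small addition the paper leaves implicit, and your discussion of how the $V_{\w}$- and $W_{\w}$-contributions to the drift both enter at scale $u^{-2}$ precisely because $\beta=\alpha$ is helpful clarification, but the architecture is identical.
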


\proofkorr{C:HYdL}
First, {we} note that with $\vk d_{\w}(t)= { \diag{\w} V (t) \w} $ and $\vk Y$ as
in  Theorem~\ref{T:nonstationary}, we have
 $\mathcal{P}_{\alpha, V_{\w}, W_{\vk w} } = \lim_{\Lambda \to \infty}  \H_{\Y, \vk{d}_{\w}+\vk{f}_{\w}}([0,\Lambda])$,
  where
 \bqn{ \label{fw}
\quad \quad  	V_{\w} = \diag{\w} V  \diag{\w},    \quad 	W_{\w} = \diag{\w} \Xi A^\top   \diag{\w},
 	\vk f_{\w}(t) = \abs{t}^\alpha W_{\w} \id .
 }

 Since {$\H_{\Y, \vk{d}_{\w}+\vk{f}_{\w}}([0,\Lambda])$} is increasing in $\Lambda$, it suffices to
 prove that {it} 
 is uniformly bounded. 

Fix $0 < \Lambda_0 < \Lambda$. 
By Lemma~\ref{L:largek} for two positive constants  $c$, $C$
\begin{align*}
\limsup_{u \to \infty} \frac{\pk{\exists t \in [\Lambda_0 u^{-2/\alpha}, \Lambda u ^{-2/\alpha}]:\ \vk{X}(t) > u \vk{b}}}{
	\pk{ \vk X(0)> u \vk b}} \leq C e^{- c \Lambda_0^{\alpha}}.
\end{align*}
It follows that
\begin{align*}
  \H_{\Y, \vk{d}_{\w}+\vk{f}_{\w}}([0,\Lambda])
=&
\lim_{u \to \infty} \frac{\pk{\exists t \in [0, \Lambda u ^{-2/\alpha}]:\ \vk{X}(t) > u \vk{b}}}{\pk{ \vk X(0)> u \vk b}}\\
\leq&
\lim_{u \to \infty} \frac{\pk{\exists t \in [0, \Lambda_0 u ^{-2/\alpha}]:\ \vk{X}(t) > u \vk{b}}}{\pk{ \vk X(0)> u \vk b}}
      + C e^{-c \Lambda_0^{\alpha}}
  \\
  \leq &
         \H_{\Y, \vk{d}_{\w}+\vk{f}_{\w}}([0,\Lambda_0]) + C e^{-c \Lambda_0^{\alpha}},
\end{align*}
which completes the proof.  \QED

Set in the following $\Delta(\tau, \lambda, \Lambda)=  [\tau, \tau+\Lambda] \times
[\lambda, \lambda + \Lambda]$
and
$$ P_{\vk b}( \tau, \lambda, \Lambda,u) =	\pk{ \exists (t,s) \in u^{-2/\alpha} \Delta(\tau, \lambda, \Lambda):\ \vk{X}(t) > u \vk{b}, \ \vk{X}(s) >
	u \vk{b}}
.$$
\begin{lem}
	\label{L:doublesub}
 If $\beta > \alpha$ and $\xi_{\w} = \w^{\top} V \w > 0$, then  for every
	$0 <\tau+\Lambda < \lambda \leq N_u$ with $u$ large enough
	\begin{align*}
P_{\vk b}( \tau, \lambda, \Lambda,u)	\leq& C_1 \Lambda^2 \exp \left( - C_2 \left( \lambda - \tau - \Lambda \right)^{\alpha}
	\right) u^{-d} \varphi_{\Sigma_{u,\tau,\lambda}}(u \vk{b}),
	\end{align*}
	where  $C_1,C_2$ are two positive constants and
	\begin{equation}
	\label{e:Sigmautl}
	\Sigma_{u,\tau,\lambda} = \frac{1}{4} \E{( \vk{X}(u^{-\alpha/2}\tau) +
		\vk{X}(u^{-\alpha/2}\lambda) ) ( \vk{X}(u^{-\alpha/2}\tau) +
		\vk{X}(u^{-\alpha/2}\lambda) )^{\top}}.
	\end{equation}
\end{lem}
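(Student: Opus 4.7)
The argument parallels the proof of \nelem{L:double}, adapted to the non-stationary setting via assumptions (D\ref{I:D1})--(D\ref{I:D4}). First, I would reduce to the case $I = \{1,\ldots,d\}$ so that $\tilb = \vk b$, by coordinate restriction using $\pk{\vk X(0) > u\vk b} \sim c\pk{\vk X_I(0) > u\vk b_I}$. Introduce the averaged field $\vk X_u(t,s) = \frac{1}{2}(\vk X(u^{-2/\alpha}t) + \vk X(u^{-2/\alpha}s))$, whose variance at $(\tau,\lambda)$ is $\Sigma_{u,\tau,\lambda}$, and observe that $P_{\vk b}(\tau,\lambda,\Lambda,u) \le \pk{\exists (t,s)\in \Delta(\tau,\lambda,\Lambda):\ \vk X_u(t,s) > u\vk b}$. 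Conditioning on $\vk X_u(\tau,\lambda) = u\vk b - u^{-1}\vk x$ and introducing $\vk\chi_u(t,s) = u(\vk X_u(t,s) - u\vk b) + \vk x$, the Gaussian conditioning identity yields
\[
P_{\vk b}(\tau,\lambda,\Lambda,u) \le u^{-d}\int_{\R^d}\varphi_{\Sigma_{u,\tau,\lambda}}(u\vk b - u^{-1}\vk x)\,\pk{\exists (t,s) \in \Delta:\ \vk\chi_u(t,s) > \vk x}\, d\vk x,
\]
and bounding the density by $\varphi_{\Sigma_{u,\tau,\lambda}}(u\vk b)\exp(\vk b^\top\Sigma_{u,\tau,\lambda}^{-1}\vk x)$ pulls the factor $\varphi_{\Sigma_{u,\tau,\lambda}}(u\vk b)$ outside the integral.

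The core of the proof is a Taylor expansion of $u^2[\Sigma_{u,\tau,\lambda} - R_u(t,s;\tau,\lambda)]\Sigma_{u,\tau,\lambda}^{-1}\vk b$ (the leading part of the conditional drift of $\vk\chi_u(t,s)$) together with the conditional covariance $K_u(t,s;t_1,s_1)$, obtained by plugging in (D\ref{I:D3}) $R(t,s) = A(t)(\mathcal{I}_d - (t-s)^\alpha D)A(s)^\top$ (with the appropriate sign adjustments for $t<s$) and (D\ref{I:D2}) $A(t) = A + O(|t|^\beta)$. This produces explicit limits in terms of $V = ADA^\top$, $V^\top$, and $\vk w = \Sigma^{-1}\vk b$. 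Since $\lambda \le N_u = u^{2/\beta}\log^{2/\beta}u$ and $\beta > \alpha$, the variance-change contributions from (D\ref{I:D2}) (of order $u^{2-2\beta/\alpha}\lambda^\beta$) are subleading to the covariance-decorrelation contributions from (D\ref{I:D3}) (of order $(\lambda-\tau)^\alpha$). A direct computation then shows that the maximum over $\Delta$ of the variance of $\vk w^\top\vk X_u(t,s)$ equals $\vk w^\top\Sigma\vk w - \frac{1}{2}u^{-2}(\lambda-\tau-\Lambda)^\alpha\xi_{\vk w} + o(u^{-2})$, with the smallest variance deficit attained at the corner $(\tau+\Lambda,\lambda)$ of $\Delta$ (where the two time indices are closest to each other).

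Splitting the $\vk x$-integration into orthants $\Omega_F \subset \R^d$ as in \nelem{L:double} and applying the Piterbarg inequality \nelem{GBorell} to $\vk\chi_u$ on $\Delta$ (whose requisite H\"older continuity is supplied by (D\ref{I:D4})) produces the area prefactor $\Lambda^2 = \mathrm{mes}(\Delta)$ together with the decay $\exp(-C_2(\lambda-\tau-\Lambda)^\alpha)$ arising from the variance-maximum improvement just noted, via the expansion $1/\sigma_{\max}^2 \approx (\vk w^\top\Sigma\vk w)^{-1}\bigl(1 + \tfrac{1}{2}u^{-2}(\lambda-\tau-\Lambda)^\alpha\xi_{\vk w}/(\vk w^\top\Sigma\vk w)\bigr)$ in the Piterbarg exponent. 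The main obstacle is tracking, uniformly in $(\tau,\lambda)$ with $0 < \tau+\Lambda < \lambda \le N_u$, the error terms in the expansions of $\Sigma_{u,\tau,\lambda}$, $R_u(t,s;\tau,\lambda)$, and $\Sigma_{u,\tau,\lambda}^{-1}$; the condition $\beta > \alpha$ together with the choice of $N_u$ precisely balances the (D\ref{I:D2}) corrections against the (D\ref{I:D3}) decorrelation, paralleling the estimates \eqref{e:SigmauB}--\eqref{e:vchiu} in the proof of \nelem{L:double}.
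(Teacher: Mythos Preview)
Your outline follows the paper's argument for one regime but misses a case split, and misidentifies where the exponential decay and the $\Lambda^2$ factor come from.

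\textbf{Missing case split.} The paper treats two regimes separately. For $\lambda-\tau\ge n_0\Lambda$ (with a fixed large $n_0$) it does what you describe: condition on $\vk X_u(\tau,\lambda)$, expand the conditional drift and covariance via (D\ref{I:D2})--(D\ref{I:D3}) to obtain the analogues \eqref{e:dudts}--\eqref{e:KuRts} of \eqref{e:echiu}--\eqref{e:vchiu}, split into orthants $\Omega_F$, and apply the \emph{scalar} Piterbarg inequality to $\eta_u=\w_F^\top(\vk\chi_{u,F}+\d_{u,F})$. This step, however, requires $\lambda-\tau\gg\Lambda$: the limiting drift $d(t,s)$ and kernel $r(t,s;t_1,s_1)$ are of order $(\lambda-\tau)^{\alpha-1}\Lambda$ and must be small relative to $(\lambda-\tau)^\alpha$, exactly as in \eqref{e:duts}--\eqref{e:etauts}. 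For the remaining range $\Lambda<\lambda-\tau<n_0\Lambda$ the paper takes a different route: it applies the uniform Pickands--Piterbarg lemma (\nelem{Lem1}) to the single process $\vk X_{u,\tau}(t)=\vk X(u^{-2/\alpha}(\tau+t))$ on $[0,\Lambda]\times[\lambda-\tau,\lambda-\tau+\Lambda]$, reducing to the explicit constant $\H(\lambda-\tau,\Lambda)$, and then bounds $\H(\lambda-\tau,\Lambda)\le c\,\Lambda^2\exp(-c'(\lambda-\tau-\Lambda)^\alpha)$ via \nekorr{C:statn0}. It is this second case that supplies the $\Lambda^2$ prefactor in the stated bound; it is not read off as $\mathrm{mes}(\Delta)$ from a Piterbarg inequality on $\vk\chi_u$.

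\textbf{Source of the exponential decay.} Your claim that the factor $\exp(-C_2(\lambda-\tau-\Lambda)^\alpha)$ arises from Piterbarg on $\vk\chi_u$ through $1/\sigma_{\max}^2\approx(\w^\top\Sigma\w)^{-1}\bigl(1+\tfrac12 u^{-2}(\lambda-\tau-\Lambda)^\alpha\xi_\w/(\w^\top\Sigma\w)\bigr)$ conflates the conditional and unconditional processes. The process $\vk\chi_u$ has conditional covariance $K_u$ of order $(\lambda-\tau)^\alpha$; it is nowhere near $\w^\top\Sigma\w$ with a small deficit. The variance expansion you wrote pertains to the \emph{unconditioned} $\w^\top\vk X_u(t,s)$, and if one applies \nelem{GBorell} directly to that, the exponent compared against $\varphi_{\Sigma_{u,\tau,\lambda}}(u\vk b)$ goes the wrong way (the maximiser $(\tau+\Lambda,\lambda)$ has larger variance than the conditioning point $(\tau,\lambda)$). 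In the paper's argument the orthant-plus-Piterbarg step on $\vk\chi_u$ contributes only a controllable growth factor $\exp(c\,\varepsilon(\lambda-\tau)^\alpha)$ with $\varepsilon$ arbitrarily small; the decay in $\lambda-\tau$ is carried by $\varphi_{\Sigma_{u,\tau,\lambda}}(u\vk b)$ itself, and the refinement to $(\lambda-\tau-\Lambda)^\alpha$ in the final statement comes from the bound on $\H(\lambda-\tau,\Lambda)$ via \nekorr{C:statn0} in the small-separation case.
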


\prooflem{L:doublesub}
The proof is similar to that of Lemma~\ref{L:double} 
and we only sketch the main ideas.
We shall assume for simplicity that $|I| = d$.
Set $\vk{X}_u(t,s) = \frac{1}{2} \left( \vk{X}(u^{-\alpha/2}t) +
\vk{X}(u^{-\alpha/2}  s) \right)$ and define
$$  	 P(\tau, \lambda,\Lambda) =\pk{ \exists (t,s) \in u^{-\alpha/2} \Delta(\tau, \lambda, \Lambda):\ \vk{X}(t) + \vk{X}(s) >
	2 u \vk{b}}.$$
For any $u>0$
\[\frac{ u^d P(\tau, \lambda,\Lambda)} {\varphi_{\Sigma_{u,\tau,\lambda}}(u \vk{b})}  \leq
 \int_{\R^d}
e^{\vk{b}^{\top} \Sigma_{u,\tau,\lambda}^{-1} \vk{x}} \pk{ \exists (t,s) \in \Delta(\tau, \lambda, \Lambda):\
	\vk{\chi}_{u,\tau,\lambda}(t,s) >  \vk{x}} d \vk{x},
\]
where $\vk{\chi}_{u,\tau,\lambda}(t,s)$ is the conditioned process  $u \left(
\vk{X}_u(t,s) - u \vk{b} \right) + \vk{x}$ given
$\vk{X}_u(\tau,\lambda) = u \vk{b} - u^{-1} \vk{x}$. Define $\d_{u,\tau,\lambda}(t) = - \E{\vk{\chi}_{u,\tau,\lambda}(t)}$ and let $R_u(t,s;t_1,s_1)$ be the cmf of $\vk{\chi}_{u,\tau,\lambda}$. Set further
\[
d(t,s) =  \frac{1}{4} \left[ (t-\tau)^{\alpha} + (s-\tau)^{\alpha} +
(\lambda-t)^{\alpha} + (s - \lambda)^{\alpha} - 2 (\lambda - \tau)^{\alpha}
\right]
\]
and
\begin{align*}
r(t,s;t_1,s_1) =& (t-\tau)^{\alpha} + \left( s - \tau \right)^{\alpha}
+ (t_1 - \tau)^{\alpha}\\
&  + (s_1-\tau)^{\alpha} + (\lambda - t)^{\alpha} + (\lambda-t_1)^{\alpha} + (s-\lambda)^{\alpha} + (s_1 - \lambda)^{\alpha} \\
&  - |t-t_1|^{\alpha} - |s-s_1|^{\alpha} - (s_1-t)^{\alpha} - (s-t_1)^{\alpha} - 2 (\lambda - \tau)^{\alpha}.
\end{align*}
By Assumptions~(D\ref{I:D2}) and (D\ref{I:D3}), we have that for
every $\varepsilon > 0$ and $(t,s) \in \Delta(\tau,\lambda,\Lambda)$
\begin{equation}
\label{e:dudts}
\left| \vk{d}_{u,\tau,\lambda}(t,s) - d(t,s) V \vk{w} -  \left[ \Sigma_{u,\tau,\lambda} - R_u(t,s;\tau,\lambda)  \right]
\Sigma_{u,\tau,\lambda}^{-1} \vk{x} \right| \leq \varepsilon \left( \lambda - \tau \right)^{\alpha}
\end{equation}
and
\begin{equation}
\label{e:KuRts}
\left\Vert R_u(t,s;t_1,s_1) - \frac{1}{4} r(t,s;t_1,s_1) V \right\Vert_{\mathrm{F}}
\leq
\varepsilon \left( \lambda - \tau \right)^{\alpha},
\end{equation}
provided $u$ is large enough. Now, by the same {argument as in the proof of} Lemma~\ref{L:double}, there exist positive
constants $c_1$ and $n_0$ such that for every $\lambda - \tau \geq n_0 \Lambda >
0$ with $\lambda \leq N_u$ and $u$ large enough
\begin{equation}
\label{e:lambdalarge}
P(\tau, \lambda,\Lambda)
\leq C_1 u^{-d} \varphi_{\Sigma_{u,\tau,\lambda}}(u \vk{b}) \exp \left( -\xi_{\vk w}^2\frac{(\lambda-\tau)^{\alpha}}{16}  \right).
\end{equation}

It remains to consider the case $\lambda - \tau \leq n_0 \Lambda$. Set
$\vk{X}_{u,\tau}(t) = \vk{X}(u^{-\alpha/2}(\tau + t))$ with cmf $R_{u,\tau} (t,s) = R(u^{-\alpha/2}(\tau+t), u^{-\alpha/2}(\tau+s))$ and define $\Sigma_{u,\tau}=R_{u,\tau}(0,0)$. 
Note that
\bqn{
\lim_{u\to\infty} u^2 \left[ \Sigma_{u,\tau} - R_{u,\tau}(t,0) \right]
= |t|^{\alpha} V
}
and
\[ \lim_{u\to\infty} u^2 \left[ R_{u,\tau}(t,s) - R_{u,\tau}(t,0)
\Sigma_{u,\tau}^{-1} R_{u,\tau}(0,s) \right] 
= R_{\alpha,V}(t,s)
\]
uniformly in $\lambda - \tau \leq n_0 \Lambda$. Recall that we defined  $\vk Y$ as  a  centered $\R^d$-valued Gaussian  process with cmf $\diag{\w} R_{\alpha,V} \diag{\w}$ and
$$\d_{\w}(t) = |t|^{\alpha} \diag{\w} V \w= V_{\w} \id, V_{\vk w}= \diag{\w} V \diag{\w}.$$
Analogous to 	Lemma~\ref{Lem1}
\BQNY
P_{\vk b}( \tau, \lambda, \Lambda,u)	&\sim & \H(\lambda-\tau, \Lambda) u^{-d} \varphi_{\Sigma_{u,\tau,\lambda}}(u \tilb)
\EQNY	as $u \to \infty$, where (set $G{=}[0, \Lambda] \times [\lambda-\tau, \lambda-\tau +
\Lambda]$)
\[ \H(\lambda - \tau, \Lambda) = \int_{\R^d} e^{\id^{\top} \vk{x}}
\pk{\exists (t,s) \in G:\ \vk{Y}(t) - \d_{\w}(t) > \vk{x},
	\vk{Y}(s) - \d_{\w}(s) > \vk{x}} 	d \vk{x},
\]
with $\d_{\w}(t) =\SA{t}{V_{\w}} \id =  |t|^{\alpha} \diag{\w} V \w.$
By Corollary~\ref{C:statn0} we have the following upper bound
$$\H(\lambda-\tau,\Lambda) \leq c_2 \Lambda^2 \exp \left( -c_3 (\lambda -
\tau - \Lambda)^{\alpha} \right), $$
which together with \eqref{e:lambdalarge} establishes  the proof.  \QED

\medskip

\begin{proof}[Proof of Theorem~\ref{T:nonstationary}]
	For  $\delta_u = u^{-2/\beta} \log^{2/\beta} u$ and  some $\theta>0$ sufficiently small by Assumption (D\ref{I:D2})
	\[ \max_{t \in [\delta_u, \theta]} \sigma_{\b}(t) \leq \sigma_{\b}^2(t_0) + c
	\delta^{\beta}_u = \sigma^2_{\b}(t_0) + c u^{-2} \log^2 u, \]
	which together with the vector-valued version of  Piterbarg inequality derived in Lemma~\ref{GBorell}  yields
	\begin{equation}
	\label{e:outside}
	\pk{\exists t \in [\delta_u, \theta]:\ \X(t) > u \b} \leq C T u^{2/\gamma
		- 1} \exp \left( - \frac{u^2}{2} \left(\sigma^2_{\b}(t_0) + c_2 u^{-2} \log^2 u \right)
	\right)
	\end{equation}
	and thus it suffices to consider the asymptotics of $\pk{\exists [0, \delta_u]:\ \X(t) > u \b}$ as $u\to \IF$.
	
Recall that in our notation
	$$\w = \Sigma^{-1} \tilb,  \quad V = A D A^{\top},  \quad W=  \Xi A^\top, \quad V_{\w} = \diag{\w} V \diag{\w}$$
and
$H_{\Y,\vk d_{\w}  }([0, \Lambda])= {\mathcal{H}}_{\alpha, V_{\w}} ([0,\Lambda])$  with $\d_{\vk w}= \SA{t}{V_{\w}} \id$ is the constant defined in \eqref{e:pickandsE}.

	{We divide the rest of the proof into  three separate cases.} 

  \medskip
	\textbf{1. Case $\beta > \alpha$.}
	For given $\Lambda$, $S$ positive  we define $N_u = \floor{S u^{\frac{2}{\alpha}-\frac{2}{\beta}}/\Lambda}$.
Consider a Gaussian process $\X_{u,k}(t) = \X(u^{-2/\alpha}(k \Lambda + t))$ with cmf $R_{u,k}(t,s) = R(u^{-2/\alpha}(k \Lambda + t), u^{-2/\alpha} (k \Lambda + s))$.  	By Assumption (D\ref{I:D2})   (set $\Sigma_{u,k} = R_{u,k}(0,0)$)
	\[ u^2 \left( \Sigma - \Sigma_{u,k} \right) \sim  \left( u^{\frac{2}{\beta} -
		\frac{2}{\alpha}} k \Lambda \right)^{\beta} \left( A \Xi^{\top} + \Xi
	A^{\top} \right) , \quad u\to \IF. 
	\]
	Hence,  $\tau_{\vk w}= \vk{w}^{\top}  A \Xi^{\top} \vk w > 0$ implies further 
	\begin{equation}
	\label{e:XukX0}
	\pk{\X_{u,k}(0) > u \b} = \pk{\X(t_0) > u \b }
	\exp \left( -\tau_{\vk w}
	\left( u^{\frac{2}{\beta} -
		\frac{2}{\alpha}} k \Lambda \right)^{\beta}  (1+o(1)) \right).
	\end{equation}
	
In view of both Assumptions (D\ref{I:D2})
	and (D\ref{I:D3})
  \begin{equation}
    \label{e:varsubtau}
    \lim_{u \to \infty} u^2 \left[ \Sigma_{u,k} - R_{u,k}(t,0) \right]
    =
    |t|^{\alpha} V
  \end{equation}
	and
	\begin{equation}
	\label{e:covsub}
	\lim_{u \to \infty} u^2 \left[ R_{u,k}(t,s) - R_{u,k}(t,0)
	\Sigma_{u,k}^{-1} R_{u,k}(0,s) \right]
	= R_{\alpha, V}(t,s)
	\end{equation}
	uniformly for all non-negative integers $ k \leq N_u$ and $t$,
	$s \in [0,
	\Lambda]$. 
	Define for $k\inn, u>0$
	$$A_k = \{\exists t \in [k \Lambda u^{-2/\alpha}, (k+1) \Lambda u^{-2/\alpha}]:\ \vk{X}(t) >
	u \vk{b} \} .$$
	In view of \eqref{e:varsubtau} and \eqref{e:covsub}, we have from  Lemma~\ref{Lem1} that
	\begin{equation}
	\label{e:asymsubpic}
	\pk{A_k}
	\sim \H_{\Y,\d_{\w}}([0, \Lambda])\pk{\vk X_{u,k}(0)> u \vk b}
	,  \quad 
	u \to \infty
	\end{equation}
	uniformly for all non-negative integers  $k \leq N_u$. 
 	This together with \eqref{e:XukX0} implies that
	\begin{align*}
    \frac{\sum_{k=0}^{N_u} \pk{A_k}}{\pk{\X(t_0) > u \b}}
	\sim&
	\frac{{\mathcal{H}}_{\alpha, V_{\w}} ([0,\Lambda])}{\Lambda} \; u^{\frac{2}{\alpha}-\frac{2}{\beta}}
	\,
	\sum_{k=0}^{N_u} \exp \left( -\tau_{\vk w}\left( u^{\frac{2}{\beta} -
		\frac{2}{\alpha}} k \Lambda \right)^{\beta}  (1+o(1))  \right) u^{\frac{2}{\beta} - \frac{2}{\alpha}} \Lambda \\
	\sim& \frac{{\mathcal{H}}_{\alpha, V_{\w}}([0,\Lambda])}{(\tau_{\vk w})^\beta \Lambda} u^{\frac{2}{\alpha}-\frac{2}{\beta}}
	\int_0^{S} e^{-x^{\beta}} d x 
 \end{align*}
as $u\to \IF$.
In view of  Lemma~\ref{L:largek}  for some $c_1 > 0$  we have
	\[
	\pk{\exists t \in [S u^{-2/\beta}, \delta_u]:\ \X(t) > u \b} \leq c_1 \exp \left( - \tau_{\w} \frac{S^{\beta}}{2} \right) \pk{\X(t_0) > u \b}.
	\]
	Letting $S \to \infty$ and then  $\Lambda \to \infty$ yields further
		\begin{equation}
	\label{e:subub}
	\limsup_{u \to \infty} \frac{\pk{\exists t \in [0, T]:\ \X(t) > u \b}}{u^{2/\alpha - 2 / \beta} \pk{\X(t_0) > u \b}} \leq  \HA_{\alpha, V_{\w}} \Gamma(1/\beta + 1) \tau_{\w}^{-\beta},
	\end{equation}
	where
	$$ {\mathcal{H}}_{\alpha, V_{\w}} = \lim_{\Lambda \to \infty} \frac{{\mathcal{H}}_{\alpha, V_{\w}} ([0,\Lambda])}{\Lambda} \in (0,\IF)$$
	 is defined in \eqref{e:Pickands}. Next,  we show the negligibility of  the double-sum term.
	Note first that
	\[ \lim_{u\to\infty} u^2 \left[ \Sigma_{u,k} - \Sigma_{u,k\Lambda,j\Lambda}
	\right] = \frac{1}{2} \left( j - k \right)^{\beta} \Lambda^{\beta}
	\]
	uniformly for all non-negative integers  $k < j \leq N_u$, where $\Sigma_{u,\tau,\lambda}$ is defined in \eqref{e:Sigmautl}.
	By Lemma~\ref{L:doublesub}, for some $c_2>0$
	\begin{equation}
	\label{e:doublekj}
	\begin{aligned}
	\sum_{j = k + 1}^{N_u} \pk{A_k A_j}
	\leq&
	c_2  \Lambda^2 u^{-|I|} \sum_{j=k+1}^{N_u} \exp \left( - \theta \left( j-k-1 \right)^{\alpha} \Lambda^{\alpha} \right) \varphi_{\Sigma_{u,k\Lambda,j\Lambda}}(u \tilb) \\
	\leq & c_2 \Lambda^2 \exp \left( -\theta \Lambda^{\beta} \right) u^{-|I|} \varphi_{\Sigma_{u,k}}(u \tilb),
	\end{aligned}
	\end{equation}
	which implies that the double-sum  $\sum_{0 \leq k < j \leq N_u} \pk{A_k A_j}$ is negligible compared to the single-sum if we let  $\Lambda \to \infty$. Therefore we complete the proof of \eqref{e:asysub}.
	
\medskip	 	
	\textbf{2. Case $\beta = \alpha$. }
{Let in the following $V_{\w}$, $W_{\w}$, $\vk f_{\w}$ be as in \eqref{fw}.
}
	It is {straightforward} to see from Lemma~\ref{Lem1} that
	\begin{equation}
	\label{e:nonc}
	\pk{\exists t \in [0, \Lambda u ^{-2/\alpha}]:\ \X(t) > u \b}
	\sim
  \H_{\Y, \d_{\w}+ \vk{f}_{\w}} ([0,\Lambda]) \pk{\X(t_0) > u \b}.
	\end{equation}
Applying Lemma~\ref{L:largek} we obtain
	\begin{equation}
	\label{e:nonAk}
	\pk{\exists t \in [\Lambda u^{-2/\alpha}, \delta_u]:\ \X(t) > u \b} \leq c_3 e^{-c_4 \Lambda^{\alpha}}.
	\end{equation}
	Combining \eqref{e:outside}, \eqref{e:nonc} and \eqref{e:nonAk} and letting $\Lambda \to \infty$ the claim in  \eqref{e:critical} follows {utilising  \nekorr{C:HYdL}}. \\
	\medskip
	\textbf{3. Case $\beta < \alpha$.} The proof is similar to the case  $\beta = \alpha$.
	Define $\X_u(t)	= \vk{X}( u^{-2/\beta} t)$, $u>0$  with cmf $R_u(t, s) = R( u^{-2/\beta} t,  u^{-2/\beta} s)$.
	By Assumptions (D\ref{I:D2}) and (D\ref{I:D3})
  \[
    \lim_{u \to \infty} u^2 \left[ \Sigma - R_u(t,0) \right]
    =
    |t|^{\beta} \Xi A^{\top}
  \]
	and
$$	\lim_{u \to \infty} u^2 \left[ R_u(t,s) - R_u(t,0) \Sigma^{-1} R_u(0,s)
	\right] = \vk{0}.
$$
	By Lemma~\ref{Lem1}, as $u\to \IF$
	\[
	\pk{\exists t \in [0, \Lambda u^{-2/\beta}]:\ \X(t) > u \b} \sim \H_{\vk{0},\vk f_{\w}}([0, \Lambda]) \pk{\X(t_0) > u \b},
	\]
	where
	\[
	\H_{\vk{0},\vk{f}_{\w}}([0, \Lambda])
	=
	\int_{\R^d} e^{\id^{\top} \x} \id_{\{\exists t \in [0, \Lambda]:\ - {\vk{f}_{\w}} |t|^{\beta} > \x \}} d \x.
	\]
	Further  Lemma~\ref{L:integral} implies
	\[
	\lim_{\Lambda \to \infty} \H_{\vk{0}, \vk f_{\w}}([0, \Lambda])
	=
	1 + \frac{\sum_{i \in I} w_i \max(0, - ( \Xi A^{\top} \w )_i)}{\w^{\top} \Xi A^{\top} \w},
	\]
	which together with Lemma~\ref{L:largek} yields \eqref{e:asymsup}.
\end{proof}

\section{Appendix}
We  present the proofs of 
{\eqref{e:Vbmin}}, Lemmas~\ref{AL}, \ref{lemGausUnif}, \ref{GBorell} and \ref{Lem1}. 

{
  \begin{proof}[Proof of \eqref{e:Vbmin}]
    In view of Assumption~(D\ref{I:D2}) we have as $t\to t_0$
    \[
      \Sigma^{-1}(t) - \Sigma^{-1} = \Sigma^{-1} \left( \Sigma - \Sigma(t) \right) \Sigma^{-1}(t)
      = \left| t - t_0 \right|^{\beta}  \Sigma^{-1} \left( A \Xi^{\top} + \Xi A^{\top} \right) \Sigma^{-1} + o \left( \left| t - t_0 \right|^{\beta} \right),
    \]
    where $A = A(t_0)$. Let $\widetilde{\b}(t)$ be the unique solution to the quadratic programming problem~\eqref{e:QP} with $\Sigma$ replaced by $\Sigma(t)$. Then we have from the fact that $\sigma_{\b}^{-2}(t) = \min_{\x \geq \b} \x^\top \Sigma^{-1}(t) \x$,
    \[
      \widetilde{\b}(t)^\top \left( \Sigma^{-1}(t) - \Sigma^{-1} \right) \widetilde{b}(t)
      \leq
      \sigma_{\b}^{-2}(t) - \sigma_{\b}^{-2}(t_0)
      \leq
      \widetilde{b}^\top \left( \Sigma^{-1}(t) - \Sigma^{-1} \right) \widetilde{b}.
    \]
    This and the fact that $\widetilde{b}(t)$ is Lipschitz continuous (c.f. Lemma~\ref{L:lip}) complete the proof.
  \end{proof}
}

\medskip
\prooflem{AL}
All the claims apart from \eqref{eq:alfaB} are known, see e.g., \cite[Lem 2.1]{Rolski17}.
Repeating the arguments of \cite[Lem 1]{Debicki10} (therein $\vk b> \vk 0$ is assumed) we have for any $\vk z \ge \vk 0$ with $\vk z^\top \b\ge 0$ and $B$ a square matrix such that $BB^\top = \Sigma$
$$ 0\le \vk z^\top \vk b = \inf_{ \vk x\ge \vk b} \vk z^\top \vk x \le
\abs{B^\top \vk w} \inf_{ \vk x \ge \vk b} \abs{B^{-1} \vk x} =
\sqrt{\vk{z}^\top \SI \vk{z}} \inf_{ \vk x \ge \vk b} \vk x^\top \Sigma^{-1} \vk x  $$
implying thus
$$ \max_{ \vk{z}\in [0,\IF)^d: \vk{z}^\top \b> 0} \frac{(\vk{z}^\top \b)^2 }{\vk{z}^\top \SI \vk{z}} \le
\min_{\x \ge  \b}
\x^\top \SIM\x.$$
By the properties of the unique solution $\tilb$ of $\Pi_{\Sigma}(\vk b)$ we have  that the unique  solution $\vk w$ of the dual programming problem of $\Pi_\Sigma(\vk b)$ is given by  $\vk{w}= \SIM \tilb$. We have  $\tilb_I=\vk b_I$ and  if $J$ is non-empty,
then   $ \tilb_J= \Sigma_{JI} \Sigma_{II}^{-1 } \vk b_I$. {Since $\Sigma$ is non-singular, then
$ (\Sigma^{-1})_{JJ} \Sigma_{JI} = -(\Sigma^{-1})_{JI}\Sigma_{II}$. Consequently, we obtain
$$ \vk w_I = (\Sigma^{-1})_{II} \vk b_I+ (\Sigma^{-1})_{IJ} \Sigma_{JI}( \Sigma_{II})^{-1 } \vk b_I
= [ (\Sigma^{-1})_{II} + (\Sigma^{-1})_{IJ} \Sigma_{JI} (\Sigma_{II})^{-1 } ]\vk b_I= (\Sigma_{II})^{-1} \vk b_I
$$
and
$$ \vk w_J = (\Sigma^{-1})_{JI} \vk b_I+ (\Sigma^{-1})_{JJ} \Sigma_{JI} (\Sigma_{II})^{-1 } \vk b_I =
[ (\Sigma^{-1})_{JI} + (\Sigma^{-1})_{JJ} \Sigma_{JI} ()\Sigma_{II})^{-1 } ]\vk b_I =\vk 0_J
$$
}
hold implying {that} 
$$ \vk w^\top \vk b =
\vk w_I ^\top \vk b_I= \b_I^\top \SIIIM \b_I =\inf_{ \vk x \ge \vk b} \vk x^\top \Sigma^{-1} \vk x>0 ,$$ which yields  further
$$ \frac{(\vk{w}^\top \b)^2 }{\vk{w}^\top\SI \vk{w}}= \vk b_I^\top \SIIIM \vk b_I= \min_{\x \ge  \b}
\x^\top \SIM\x.$$
Hence \eqref{eq:alfaB} follows and  the proof is complete.
\QED

\medskip
\prooflem{lemGausUnif}
First note that a family of vector-valued processes is tight if its components are tight, see e.g.,  \cite[Cor 1.3]{HaL}.
Let $\tau_u\in Q_u,u>0$ be given and set $\vk Z_u(t)= \vk X_{u,\tau_u}(t)- \vk f_{u,\tau_u}(t)$.
For given  $s_i\inr, ,t_i \in E, i\le n$ by
Berman's comparison lemma (see e.g., \cite{Berman92}) and \eqref{covcon} for some $c>0$ 
\BQNY
\lefteqn{ \hspace{- 2 cm}
	\abs{ \pk{ X_{i,u,\tau_u}(t_k) - f_{i,u,\tau_u}(t_k) \le s_i,i \le d, k\le n}- \pk{ Y_{i}(t_k) - f_{i,u,\tau_u}(t_k) \le s_i,i \le d, k\le n }}  }\\
&\le & c \sum_{1 \le l \le k \le n, 1 \le i\le j \le n} \abs{  \cov(X_{i,u,\tau_u}(t_l), X_{j,u,\tau_u}(t_k)  )- \cov(Y_i(t_l),Y_j(t_k))}  \\
&\to & 0, \quad u\to \IF. \quad
\EQNY
Since $Y_i(t_k), i\le d, k\le n$ has a continuous distribution, {then} \eqref{uniftauT} yields
\BQNY
\limit{u}	\abs{ \pk{ Y_{i}(t_k) - f_{i,u,\tau_u}(t_k) \le s_i,i \le d, k\le n } -
	\pk{ Y_{i}(t_k) - f_{i}(t_k) \le s_i,i \le d, k\le n }  }  &= & 0 .
\EQNY
Consequently, the  fidi's  of $\vk Z_u$ converge in distribution to those of $\vk Y - \vk f$ as $u\to \IF$.

Moreover, condition \eqref{uniftauT} implies that each component of $\xxiu$, $u>0$ is tight, see \cite[Prop 9.7]{Pit20}. By \eqref{uniftauT} each component of $\vk Z_u, u>0$ is also tight, and thus $\vk Z_u, u>0$ is tight.
Since by assumption $\Gamma$ is continuous, the continuous mapping theorem implies that for any continuity point
$s$ of $\Gamma(\vk Y- \vk f)$ we have
\BQN \label{eq:contra} \limit{u} \pk{ \Gamma( \vk Z_u) > s} = \pk{\Gamma(\vk Y - \vk f)> s},
\EQN
hence
\BQNY \limit{u} \sup_{ \tau \in Q_u} \abs{ \pk{ \Gamma(\xxiu- \vk{f}_{u,\tau})> s} - \pk{\Gamma(\vk{Y}- \vk{f})>s}}=0.
\EQNY
Indeed, if the above is not satisfied, then for a given $\ve>0$ and all $u$ large we can find $\tau_u$ such that
$\abs{ \pk{ \Gamma( \vk X_{u,\tau_u} - \vk{f}_{u,\tau_u})> s} - \pk{\Gamma(\vk{Y}- \vk{f})>s}}> \ve$
which is a contradiction in view of \eqref{eq:contra}, hence the proof is complete. \QED

\medskip
{
\prooflem{l.cont}
Let
 $\vk x \in \R^d$ and
$\vk \delta =(\delta_1,...,\delta_d)^\top\ge \vk 0 $ be given. The proof follows from combination of the fact that
\begin{eqnarray*}
\lefteqn{
\pk{\exists
		t \in E: \ \vk Y (t) > \x -\delta}
-
\pk{\exists
		t \in E: \ \vk Y (t) > \x +\delta}}\\
&\le&
\sum_{i=1}^d
\pk{\sup_{t \in E}  Y_i (t) \in (x_i-\delta_i,x_i+\delta_i]}
\end{eqnarray*}
and Tsirelson Theorem (see, e.g., \cite[Thm 7.1]{AzW09} and \cite{MR666093})
which implies that
$\pk{\sup_{t \in E}  Y_i (t) \le x}$
is continuous  $i=1,\ldots,d$ except at most at one point $s_i=\inf\{s:\pk{\sup_{t \in E}  Y_i (t) \le x}>0\}$.
\QED
}

\medskip
\prooflem{GBorell}
For $\vk{z} \in [0,\IF)^d$ such that $\vk{z}^\top \b > 0$, we define
$Y_{\vk{z}}(t) =
\widetilde{\vk{z}}^\top \vk{Z}(t)$ with $\widetilde{\vk{z}} = \vk{z}/(\vk{z}^\top \b)$.
Clearly,  $\{ \vk{Z}(t) > u \b \} \subset \{ Y_{\vk{z}}(t) > u \}$ for such $\vk{z}$ implying
\BQN\label{eq:BorellY}
\pk{\exists {t\in\VV} :\ \vk{Z}(t)>u\vk{b}}&\le& \inf_{\vk{z} \in
	[0,\IF)^d : \vk{z}^\top \vk{b}>0} \pk{ \sup_{t\in\VV}
	{Y}_{\vk{z}}(t) > u }.
\EQN
The proof of \eqref{eqBO} follows by a direct application of Borell-TIS inequality to
$Y_{ \vk{z}}(t) $ (see e.g., \cite{AdlerTaylor}) with
$$\mu=\E{\sup_{t\in \VV} Y_{\vk z}(t)  }<\IF.$$

Next, if $\Sigma(t)$ is non-singular for $t\in \VV$, choose $\vk{v}(t)\ge \vk{b}$ such that it minimises $\vk{v}^\top(t) \Sigma^{-1}(t) \vk{v}(t)$ and
set $\vk{z}(t) = \Sigma^{-1}(t) \vk{v}(t)$. By \nelem{AL} and \eqref{eq:ZG2}
$$ \sup_{\vk{z}\ge [0,\IF)^d: \vk{z}^\top \vk{b}>0} \Var( Y_{\vk{z}}(t)) = \Var( Y_{\vk{z}(t)}(t))
=\frac{1}{ \vk{v}(t)^\top \Sigma^{-1}(t)\vk{v}(t)}>0.$$
In view of Lemma \ref{L:lip}, \eqref{e:holder} and the compactness of $\VV$ for $\vk f=\vk v$ or $\vk f= \vk w$ we obtain
\BQN \label{ef}
\abs{ \vk f(s)-  \vk f(t)} \le C_1 \abs{t- s}^{\gamma}
\EQN
for some positive constant $C_1$. 
Note that since $\VV$ is compact
\BQN
\inf_{t \in \VV} \vk{v}(t)^\top \b > 0.
\EQN
It follows that $\vk f(t)=\widetilde{\vk{z}}(t) = \vk{z}(t) / [\vk{z }(t)^\top \b]$ also satisfies \eqref{ef} (for some other constant $C_1$).
This and \eqref{eq:ZG2} imply the $\gamma$-H\"older continuity of
$Y_{\widetilde{\vk{z}}}(t) = \widetilde{\vk{z}}(t)^\top \vk{Z}(t)$.
Therefore \eqref{IPIT} follows by applying \cite[Thm ~8.1]{Pit96} to $Y_{\widetilde{\vk{z}}}$.


If $\VB(t),t\in \VV$ has a unique maximum at $t_0\in \VV$ and is continuous, the claim follows by using first Borell-TIS inequality
and then applying Piterbarg inequality for the neighborhood of $t_0$,
see the derivation of (32) and (33) in \cite{DHJT15}. \QED

\medskip
\prooflem{Lem1}
For simplicity we assume that the index set $J$ is not empty.
Define  $\uY \in \R^d$ such that $\uY_I$ has all components equal $u$ and $\uY_J$ has all components equal 1.
Set next
\BQNY
\vk{Z}_{u,\tau}(t)=\uY \cdot \bigl [\xxiu(\t) -u \tilb \bigr] + \x,
\quad \vk{\chi}_{u,\tau}(t)= \bigl(\vk{Z}_{u,\tau}(t) \lvert \vk{Z}_{u,\tau}(0)= \vk{0} \bigr),
\EQNY
{where} in our notation $\vk x \cdot \vk y=(x_1y_1 \ldot x_d y_d)^\top$ for $\vk x$ and $\vk y$ two vectors in $\R^d$.
For any $u>0$ we obtain
\begin{align*}
& \pk{\exists {t \in \bD}:\ \xxiu(t) > u \vk{b}} \\
=&
\pk{\exists {t \in \bD}:\ \bigl( \xxiu(\t) -u \tilb\bigr)> u (\b - \tilb)}\\
=&u^{-m} \int_{\R^d}
\pk{\exists t \in \bD :\ \bigl(  \xxiu(\t) -u \tilb \bigr)> u (\b - \tilb)
	\big|
	\xxiu(0)= u \tilb  - \vk{x}/\uY } \varphi_{\Sigma_{u,\tau}}( u \tilb -
\vk{x}/ \uY )\, d\x \\
=& u^{-m} \int_{\R^d}
\pk{\exists t \in \bD :\  \vk{Z}_{u,\tau}(t) > \x + \uY u( \b-\tilb)  \big|
	\vk{Z}_{u,\tau}(0)=\vk{0}} \varphi_{\Sigma_{u,\tau}}( u \tilb - \vk{x}/ \uY )\, d\x\\
=& u^{-m} \int_{\R^d}
\pk{\exists t \in \bD :\  \bigl( \vk{\chi}_{u,\tau}(t) -  u \uY ( \b-
	\tilb) \bigr)> \x } \varphi_{\Sigma_{u,\tau}}( u \tilb - \vk{x}/
\uY )\, d\x .
\end{align*}

{Assumption} (A\ref{I:A2}) implies
\begin{equation}
\label{e:echi}
\begin{aligned}
\E{\vk{\chi}_{u,\tau}(t) }
= & \uY \cdot \left\{ R_{u,\tau}(t, 0) \Sigma_{u,\tau}^{-1} (u \tilb - \x/\uY) - u \tilb \right\} + \x \\
= & \begin{pmatrix}
u^2 \left\{ (R_{u,\tau}(t,0) - \Sigma_{u,\tau} ) \Sigma_{u,\tau}^{-1} \tilb \right\}_I \\
u \left\{ (R_{u,\tau}(t,0) - \Sigma_{u,\tau} ) \Sigma_{u,\tau}^{-1} \tilb \right\}_J
\end{pmatrix}
+ \uY \cdot \left\{ (R_{u,\tau}(t,0) - \Sigma_{u,\tau} ) \Sigma_{u,\tau}^{-1} (\x / \uY) \right\} \\
\to & \begin{pmatrix}
- \vk{d}_I(t) \\
\vk{0}_J
\end{pmatrix}
\end{aligned}
\end{equation}
and
\begin{equation}
\label{e:vchi}
\begin{aligned}
& \E{ \left[\vk{\chi}_{u,\tau}(t) - \E{\vk{\chi}_{u,\tau}(t)} \right]
	\left[ \vk{\chi}_{u,\tau}(s) - \E{\vk{\chi}_{u,\tau}(s)} \right]^\top } \\
= & \diag{\uY} \left[ R_{u,\tau}(t,s) - R_{u,\tau}(t,0) \Sigma_{u,\tau}^{-1} R_{u,\tau}(0,s) \right] \diag{\uY} \\
\to & \begin{pmatrix}
K_{II}(t,s) & \vk{0} \\
\vk{0} & \vk{0}
\end{pmatrix}
\end{aligned}
\end{equation}
uniformly in $t$, $s\in \bD$ and $\tau \in Q_u$ as $u \to \infty$. It follows that the assumptions of Lemma
\ref{lemGausUnif} hold true with $\xxiu(t)$, $\vk{f}_{u,\tau}(t)$ and $\vk{f}(t)$ replaced by
$\vk{\chi}_{u,\tau}(t) - \E{\vk{\chi}_{u,\tau}(t)}$,
$\E{\vk{\chi}_{u,\tau}(t)}$
and $\vk{d}(t)$.
{Define the index set $L$ as the {maximal} subset of $J$ such that $\tilde b_i= b_i$ for any $i \in L$. Hence  $\tilde{b}_i= b_i$ for all $i\in I\cup L$ and $\tilde{b}_i>b_i$ for $i\in J \setminus L$. For simplicity we shall assume that $L$ is non-empty.}  If $\W(t), t\ge 0$ denotes a  centered Gaussian process with $K$ as its cmf, then $\vk{\chi}_{u,\tau}(t) -
\E{\vk{\chi}_{u,\tau}(t)}$, $t\in \bD$ converges  for any $\tau \in Q_u$ in distribution to $\begin{pmatrix}
\W_I(t) \\ \vk{0}_J \end{pmatrix}$ on
the Banach space $C(E)$ (the space of all $\R^d$-valued continuous functions on $E$) equipped with the sup-norm.  We write next
\[
	\pk{\exists t \in \bD :\  ( \vk{\chi}_{u,\tau}(t) -  u \uY (
    \b- \tilb) )> \x } =
  \pk{ \sup_{t\in  \bD} \min_{1 \le i \le d}  \Bigl(\vk{\chi}_{u,\tau,i}(t) - u \overline{u}_i (b_i - \tilde{b}_i) -  x_i \Bigr)> 0 }=:a_u(\x). \]
{Since
$\sup_{t\in \bD} \min_{1\le i \le d} f_i, \vk f\in C(E)$
is a continuous functional on $C(E)$, then by \nelem{lemGausUnif} and \nelem{l.cont}}
\begin{equation}
\label{e:convprobx}
\limit{u} \sup_{\tau \in Q_u}\Abs{
	a_u(\x)- \id_{\{\x_L < \vk{0}_L\}} \pk{\exists
		t \in \bD: \ \W_I(t) - \vk{d}_I(t) > \x_I} }=0
\end{equation}
holds for almost all $\vk x\inr^d$.  Note that, by Lemma~\ref{AL}  there exists a positive constant $\lambda$
such that for all $u$ large and all $\tau \in Q_u$
\[ \frac{1}{2} (\x /\uY)^\top \SI^{-1}_{u,\tau} (\x/\uY) \geq \lambda \left(
\vk{x}/\uY \right)^{\top} \left( \vk{x}/\uY \right) \geq \lambda
\vk{x}_J^{\top} \vk{x}_J.
\]
Hence  (set $\vk w= \Sigma^{-1} \tilb$ and recall that
$\vk w_I= \SIIIM \b_I> \vk 0_I$, $\vk w_J= \vk 0_J$ and $\vk w^\top \x= \vk w_I^\top \vk x_I$ for any $\x \inr^d$)
\begin{align*}
\varphi_{\Sigma_{u,\tau}}( u \tilb  - \vk{x}/\uY)
= & \varphi_{\Sigma_{u,\tau}}( u \tilb) \exp \left( u \left( \x/\uY
\right)^{\top} \Sigma_{u,\tau}^{-1} \tilb - \frac{1}{2} \left( \x/\uY
\right)^{\top} \Sigma_{u,\tau}^{-1} \left( \x/\uY \right) \right) \\
= & \varphi_{\Sigma_{u,\tau}}( u \tilb)  \exp \left( \vk{w}_I^{\top} 	\vk{x}_I - \frac{1}{2} (\x /\uY)^\top \SI^{-1}_{u,\tau} (\x/\uY) ) + u
\tilb^{\top} \left( \Sigma_{u,\tau}^{-1} - \Sigma^{-1} \right)
\left( \x/\uY \right)^{\top} \right) \\
\leq & \varphi_{\Sigma_{u,\tau}}(u\tilb) \exp \left(\vk{w}_I^{\top} \vk{x}_I + u \tilb^{\top} \left(
\Sigma_{u,\tau}^{-1} - \Sigma^{-1} \right) \left( \vk{x}/\uY
\right) - \lambda \vk{x}_J^{\top} \vk{x}_J \right).
\end{align*}
In view of  \eqref{e:convprobx} and Assumption (A\ref{I:A1}) as $u\to \IF$
\begin{equation}
\label{e:asypick}
\begin{aligned}
&  \pk{\exists t \in \bD :\  \xxiu(t) > u \vk{b}} \\
\sim & u^{-m} \varphi_{\Sigma_{u,\tau}}(u \tilb)
\int_{\R^d}
e^{ \x^{\top}_I  \vk{w}_I- \frac{1}{2} \x_J^\top
	(\SI^{-1})_{JJ}\x_J}
\id_{\{\x_L < \vk{0}_L\}}
\pk{\exists {t \in \bD}:\ \W_I(t) - \vk{d}_I(t)  > \x_I} \, d\x.
\end{aligned}
\end{equation}
The asymptotic equivalence  above follows by the dominated convergence theorem. We
shall justify its {applicability} as follows.
First note that 
$$ w_i=(\SIIIM \vk{b}_I)_i >0, \quad \forall i\in I.$$
Split the region of the integration on sets where the $i$th
coordinates of $\x_I$ are either positive or negative.
If $x_i<0$, then the domination {of the integrand} for this coordinate is clear since $w_i  >0$. 
Suppose that we deal therefore with a region $\Omega$ where the first $l$ components are negative and the $m-l$ components in the index set $F$ are positive.
Then (recall the definition of $a_u(\x)$ above)
\begin{eqnarray*}
a_u(\x)
\leq  \pk{\exists t \in \bD:\ \vk{\chi}_{u,\tau,F}(t) > \vk{x}_F}
\leq  \pk{\exists t \in \bD: \ \vk{1}_F^{\top}
	\vk{\chi}_{u,\tau,F}(t) > \vk{1}_F^{\top} \vk{x}_F}.
\end{eqnarray*}
It follows from \eqref{e:echi}, \eqref{e:vchi} and Assumptions
(A\ref{I:A1}), (A\ref{I:A2}) that for any $t\in E$
\[ \E{\vk{1}_F^{\top} \vk{\chi}_{u,\tau,F}(t)} \leq c + \delta
\vk{1}_F^{\top} \vk{x}_F \quad \text{and }
\E{( \vk{1}_F^{\top} \vk{\chi}_{u,\tau,F}(t) )^2} \leq
\sigma^2 \]
for some $\delta > 0$, $\sigma > 0$ and some constant $c$.
By the Borell-TIS inequality (c.f. \cite[Theorem~D.1]{Pit96})
\BQNY
a_u(\x)&\leq & \pk{\exists t \in \bD: \ \vk{1}_F^{\top}
	\vk{\chi}_{u,\tau,F}(t) > \vk{1}_F^{\top} \vk{x}_F} \leq  e^{-\varepsilon \left( \vk{x}_F^{\top} \vk{x}_F \right)}
\EQNY
for some $\ve>0$ small enough, hence the domination of the integrand follows.

Taking in particular $E = \{0\}$ in \eqref{e:asypick} implies as $u\to \IF $
\[
\pk{\X_{u,\tau}(0) > u \b} \sim u^{-m} \varphi_{\Sigma_{u,\tau}}(u \tilb) \left( \prod_{i \in I} w_i \right)^{-1} \int_{\R^{|J|}} e^{- \frac{1}{2} \x_J^{\top} \left( \Sigma^{-1} \right)_{JJ} \x_J} \id_{\{\x_L < \vk{0}_L\}} d \x_J.
\]
Therefore, as $u\to \IF$
\begin{align*}
& \pk{\exists t \in E:\ \X_{u,\tau}(t) > u \b} \\
\sim &
\pk{\X_{u,\tau}(0) > u \b} \left( \prod_{i \in I} w_i \right) \int_{\R^m} e^{\w_I^{\top} \x_I} \pk{ \exists t \in E:\ \W_I(t) - \d_I(t) > \x_I} d \x_I,
\end{align*}
which complete the proof.
\QED

\section*{Acknowledgments}
Partial 	support by SNSF Grant 200021-175752/1  is kindly acknowledged.
K.D. was partially supported by NCN Grant No
2018/31/B/ST1/00370.

\bibliographystyle{ieeetr}
\def\polhk#1{\setbox0=\hbox{#1}{\ooalign{\hidewidth
  \lower1.5ex\hbox{`}\hidewidth\crcr\unhbox0}}}

\end{document}